\title{A Comparison of Takai and Treumann Dualities}
\author{Vikram Nadig\thanks{Universit\"at Bielefeld\\vikram.nadig@math.uni-bielefeld.de}}
\date{\today}
\numberwithin{equation}{section}
\newtheorem{theorem}{Theorem}[section] 
\newtheorem{prop}[theorem]{Proposition}
\newtheorem{lem}[theorem]{Lemma}
\newtheorem{ddd}[theorem]{Definition}
\newtheorem{kor}[theorem]{Corollary}
\theoremstyle{remark}
\theoremstyle{definition}
\newtheorem{rem}[theorem]{Remark}
\begin{document}

\maketitle

\begin{abstract}
We prove a comparison result between two duality statements - Takai duality, which is implemented by the crossed product functor $- \rtimes G\colon KK^{G} \to KK^{\hat G}$ on equivariant Kasparov categories; and Treumann duality, which asserts the existence of an exotic equivalence of stable $\infty$-categories $\text{Fun}(BG,\text{Perf}(KU_p))\simeq \text{Fun}(B\hat G,\text{Perf}(KU_p))$ given by tensoring with a particular $(G,\hat G)$-bimodule $\textbf{M}$.
\end{abstract}
\setcounter{tocdepth}{1}
\tableofcontents
\section{Introduction}

Let $G$ be a finite abelian $p$-group and $\hat G$ be its Pontryagin dual. In this paper, we study and compare two different duality results on algebraic structures with actions of $G$ and $\hat G$.

The first of these is in the realm of $C^{*}$-algebras, and holds in the greater generality of locally compact abelian groups. Given a $G$-$C^{*}$-algebra (a $C^*$-algebra together with a continuous $G$-action) $A$, we can form the crossed product $A \rtimes G$, which is a $C^{*}$-algebra that acquires a natural action of $\hat G$. It is natural to ask, via the Pontryagin duality isomorphism, what the $G$-$C^{*}$-algebra $(A \rtimes G) \rtimes \hat G$ is. This question was answered by Takai in the 1970s:
\begin{theorem}[\textbf{Takai Duality}]\label{primo}
    There is a $G$-equivariant isomorphism of $C^{*}$-algebras  $$(A \rtimes G) \rtimes \hat G \cong A \otimes K(L^2(G))$$
\end{theorem}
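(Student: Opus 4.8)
The plan is to construct an explicit $*$-isomorphism on the canonical dense $*$-subalgebras, extend it by continuity, and then track the bidual action; the Stone--von Neumann theorem will do the real work. Recall the concrete pictures: $A\rtimes_\alpha G$ is the completion of $C_c(G,A)$ under the $\alpha$-twisted convolution with adjoint $f^*(s)=\alpha_s(f(-s)^*)$; the dual $\hat G$-action is $(\hat\alpha_\chi f)(s)=\chi(s)f(s)$; hence $(A\rtimes_\alpha G)\rtimes_{\hat\alpha}\hat G$ is the completion of $C_c(\hat G\times G,A)$ under the iterated, doubly twisted convolution. On the other side I would present $A\otimes K(L^2(G))$ as the completion of the $*$-algebra of $A$-valued kernels $k\in C_c(G\times G,A)$, with product $(k_1k_2)(s,t)=\int_G k_1(s,r)k_2(r,t)\,dr$ and adjoint $k^*(s,t)=k(t,s)^*$.

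The candidate map $\Phi\colon C_c(\hat G\times G,A)\to C_c(G\times G,A)$ is a Fourier transform in the $\hat G$-variable composed with an affine change of the two $G$-variables, of the shape $\Phi(F)(s,t)=\int_{\hat G}\chi(t)\,\alpha_{-s}\!\big(F(\chi,\,s-t)\big)\,d\chi$, with the precise signs and normalisations fixed by requiring the following step to work. A direct change-of-variables computation, using Fourier inversion and the Plancherel formula for the pair $(G,\hat G)$, then shows that $\Phi$ intertwines the doubly twisted convolution with the kernel product and respects the two involutions, so that $\Phi$ is a $*$-homomorphism between dense $*$-subalgebras.

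To upgrade $\Phi$ to an isometric isomorphism of $C^*$-completions I would pass to covariant representations. A covariant representation of the iterated system is precisely a triple $(\rho,U,V)$ on a Hilbert space $\mathcal H$, with $\rho\colon A\to B(\mathcal H)$ a $*$-representation, $U\colon G\to U(\mathcal H)$ and $V\colon\hat G\to U(\mathcal H)$ unitary representations satisfying $U_s\rho(a)U_s^*=\rho(\alpha_s a)$, $\rho(a)V_\chi=V_\chi\rho(a)$, and the Heisenberg relation $V_\chi U_s V_\chi^*=\chi(s)U_s$. By the Stone--von Neumann theorem the pair $(U,V)$ is, up to multiplicity, the Schr\"odinger pair on $L^2(G)$ (translation and modulation), so $\mathcal H\cong L^2(G)\otimes\mathcal H_0$ with $U,V$ acting on the first factor; the commutation relations then force $\rho$ to be the ``diagonal'' representation $(\rho(a)\xi)(t)=\rho_0(\alpha_{-t}a)\xi(t)$ determined by a single $*$-representation $\rho_0$ of $A$ on $\mathcal H_0$. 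Feeding this back through the integrated form identifies the image of $C_c(\hat G\times G,A)$ with $\rho_0(A)\otimes K(L^2(G))$ on $\mathcal H_0\otimes L^2(G)$, and unravelling shows that this image is exactly the algebra of kernel operators $\Phi(F)$. Taking $\rho_0$ faithful, and using that the full crossed product is defined by the supremum over all covariant representations, $\Phi$ is isometric with dense range and hence extends to an isomorphism $(A\rtimes_\alpha G)\rtimes_{\hat\alpha}\hat G\xrightarrow{\ \cong\ }A\otimes K(L^2(G))$. (One can also bypass Stone--von Neumann by reducing to the base case $C_0(G)\rtimes_{\mathrm{lt}}G\cong K(L^2(G))$, the Fourier duality $C^*(\hat G)\cong C_0(G)$, and the observation that a ``diagonal'' action by translation can be untwisted, $(A\otimes C_0(G))\rtimes_{\alpha\otimes\mathrm{lt}}G\cong A\otimes\big(C_0(G)\rtimes_{\mathrm{lt}}G\big)$.)

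Finally, for $G$-equivariance I would track the bidual action of $\hat{\hat G}\cong G$, which on $C_c(\hat G\times G,A)$ is $(\hat{\hat\alpha}_r F)(\chi,s)=\chi(r)F(\chi,s)$: under the Fourier transform in the $\hat G$-variable, multiplication by the character $\chi\mapsto\chi(r)$ turns into translation by $r$ in the Fourier-dual $G$-variable, so $\Phi$ conjugates $\hat{\hat\alpha}_r$ into the action $\alpha_r\otimes\operatorname{Ad}(\rho_r)$ on $A\otimes K(L^2(G))$, where $\rho$ denotes the regular representation of $G$ on $L^2(G)$ --- this is the $G$-action carried by the right-hand side. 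The one genuinely non-formal ingredient is the bijectivity in the previous paragraph: the convolution and Fourier identities are all routine bookkeeping, but proving $\Phi$ isometric and surjective really needs an input of the strength of Stone--von Neumann (equivalently, the uniqueness up to multiplicity of the Heisenberg representation of $G\times\hat G$), together with some care over the analytic passage from the $C_c$-level algebras to their $C^*$-completions.
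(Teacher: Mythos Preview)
Your proposal is a correct outline of the classical proof of Takai duality. Note, however, that the paper does not itself prove Theorem~\ref{primo}: it is stated in the introduction as background, with the proof deferred to Williams' book (Section~7.1, cited as \texttt{algwill}). The paper's own work, in Section~\ref{three}, is the upgrade to a \emph{natural} isomorphism of functors $GC^*\mathrm{Alg}^{nu}\to GC^*\mathrm{Alg}^{nu}$, which it obtains by checking that each of the classical intermediate isomorphisms is natural in $A$.

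The proof structure the paper sketches (following Williams) is exactly your parenthetical alternative rather than your main line: one first produces an explicit Fourier-type isomorphism $\phi_1^A\colon (A\rtimes G)\rtimes\hat G \to (C_0(G)\otimes\underline A)\rtimes G$, then invokes the base case $\phi_2\colon C_0(G)\rtimes G\cong K(L^2(G))$ and the tensor/untwisting identities (Lemmas~\ref{2.5} and~\ref{2.6} in the paper) to conclude. Your primary route---classifying covariant representations of the iterated system directly via Stone--von Neumann---is an equally standard and somewhat more conceptual packaging; the two are not really different arguments, since the base case $C_0(G)\rtimes G\cong K(L^2(G))$ is itself Stone--von Neumann in disguise. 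The equivariance check is the same in both: the bidual $G$-action becomes translation in the Fourier variable, which matches $\alpha\otimes\mathrm{Ad}(\rho)$ on the kernel side.
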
 Here, $K(L^2(G))$ is given the $G$-action of conjugation with the left-regular representation of $G$ on $L^2(G)$.

The other duality result we take up is in the modern realm of higher algebra. Let $KU_p$ denote the $p$-completed complex $K$-theory spectrum, and let $KU_p[G]$ denote the commutative ring spectrum $KU_p \wedge \Sigma_{+}^{\infty} G$. One can form the stable $\infty$-category of modules $\text{Mod}(KU_p[G]) \simeq \text{Fun}(BG, \text{Mod}(KU_p))$ over $KU_p[G]$, and consider the full subcategory $\text{Fun}(BG, \text{Perf}(KU_p))$ of functors valued in perfect modules. In \cite{treumann2015representations}, a module $\textbf{M}$ over $KU_p[G \times \hat{G}]$ is constructed, whose underlying $KU_p$-module is $KU_p$ itself. Tensoring by $\textbf{M}$ over $KU_p[G]$ and $KU_p[\hat G]$, respectively, induces functors $\text{Fun}(BG,\text{Perf}(KU_p)) \to \text{Fun}(B\hat G,\text{Perf}(KU_p))$ and $\text{Fun}(B\hat G,\text{Perf}(KU_p)) \to \text{Fun}(BG,\text{Perf}(KU_p))$. Treumann proved the following duality result\footnote{We specialize Treumann's result to $KU_p$.} (Theorem 1.5.1 of \cite{treumann2015representations}):
\begin{theorem}[\textbf{Treumann Duality}]\label{secondo}
The functor
\[\begin{tikzcd}
	{\textup{Fun}(BG,\textup{Perf}(KU_p))} && {\textup{Fun}(B\hat G,\textup{Perf}(KU_p))}
	\arrow["({-\otimes_{KU_p[G]}\textup{\textbf{M}}})_p", from=1-1, to=1-3]
\end{tikzcd}\] is an equivalence with inverse $(- \otimes_{KU_p[\hat G]} \textup{\textbf{M}})_p$.
\end{theorem}
This theorem produces an equivalence of stable $\infty$-categories $\text{Fun}(BG,\text{Perf}(KU_p)) \simeq \text{Fun}(B\hat G,\text{Perf}(KU_p))$ that is not induced by any choice of isomorphism $G \cong \hat{G}$ (for example, the $KU_p[G]$-module $KU_p$ with trivial $G$-action is mapped to $KU_p[\hat G]$ under this equivalence).

The article \cite{bunke2023stable} bridges these two different worlds. A stable $\infty$-category $KK^G$ is constructed, which represents Kasparov's $G$-equivariant $KK$-theory for $C^{*}$-algebras, in the following sense. Denote by $KK^{G}_{*}(-,-)$ Kasparov's original equivariant $KK$-groups. There is a functor $kk^{G}\colon GC^{*}\text{Alg}^{nu} \to KK^G$ (where $GC^{*}\text{Alg}^{nu}$ is the category of $G$-$C^{*}$-algebras) such that $$\pi_*KK^{G}(kk^{G}(A),kk^{G}(B)) \cong KK^{G}_*(A,B)$$ for all separable $G$-$C^{*}$-algebras $A$ and $B$. That is, for separable algebras,  Kasparov's classical $G$-equivariant $KK$-groups can be read off from the stable homotopy groups of the mapping spectra in $KK^{G}$. In Section \ref{twoone}, we recall the explicit construction of $KK^{G}$ and $kk^G$ by a series of Dwyer-Kan localizations and state their universal properties.

In the language of equivariant $KK$-theory, Theorems \ref{primo} and \ref{secondo} are similar statements. To motivate why, we consider a simple example. Consider the $C^{*}$-algebra $\mathbb C$ with trivial $G$-action. Its $p$-completed $K$-theory spectrum is nothing but $KU_p$ itself, and it inherits the trivial $G$-action. On the other hand, the crossed product $\mathbb{C} \rtimes G$ can be identified, under the Fourier transform, with $C_0(\hat G)$. This can be identified with a $\hat G$-indexed direct sum $\bigoplus_{\hat G} \mathbb{C}$, and $\hat G$ acts on it by permuting the summands. Consequently, the $p$-completed $K$-theory spectrum of $\mathbb{C} \rtimes G$ is $KU_p[\hat G]$ with the usual $\hat{G}$-action, and so one recovers the association $KU_p \mapsto KU_p[\hat G]$ of Theorem \ref{secondo} by taking the crossed product as illustrated.

In Section \ref{twotwo}, we observe that the crossed product gives a functor $ -\rtimes G\colon GC^{*}\text{Alg}^{nu} \to \hat GC^{*}\text{Alg}^{nu}$. We subsequently show that it descends essentially uniquely to a functor $- \rtimes G\colon KK^{G} \to KK^{\hat G}$. Here, ``descends" means that we have a commutative square of functors 
\[\begin{tikzcd}
	{GC^{*}\text{Alg}^{nu}} && {\hat GC^{*}\text{Alg}^{nu}} \\
	\\
	{KK^{G}} && {KK^{\hat G}}
	\arrow["{\rtimes G}", from=1-1, to=1-3]
	\arrow["{kk^G}"', from=1-1, to=3-1]
	\arrow["{\rtimes G}"', from=3-1, to=3-3]
	\arrow["{kk^{\hat G}}", from=1-3, to=3-3]
\end{tikzcd}\]

If we aspire to formulate a reasonable comparison statement between the two duality results stated above, we must begin by stating Theorem \ref{primo} in categorical language, which we do in Section \ref{three}. From this and the $K_G$-stability of $KK$-theory, we conclude that $- \rtimes G\colon KK^{G} \to KK^{\hat G}$ is an equivalence (Corollary \ref{eq}). Expressed in this way, Takai duality takes on a form similar to Treumann duality.

Taking $K$-theory and $p$-completing yields a natural functor $$r_G\colon KK^{G} \to \text{Mod}(KU_p[G])$$
We define $KK^{G,ft}$ to be the full subcategory of $KK^G$ that is mapped by $r_G$ to $\text{Fun}(BG,\text{Perf}(KU_p))$. We show that $KK^{G,ft}$ is, in fact, a thick subcategory of $KK^{G}$. Further, we show that the crossed product functor $- \rtimes G$ restricts to an equivalence $KK^{G,ft} \simeq KK^{\hat G,ft}$ (Corollary \ref{hari}).

We conclude the paper with our comparison result (Theorem \ref{4.9}):
\begin{theorem}\label{trid}
    There is a natural equivalence filling the square of functors 
\[\begin{tikzcd}
	{KK^{G,ft}} &&& {KK^{\hat G,ft}} \\
	\\
	\\
	{\textup{Fun}(BG,\textup{Perf}(KU_p))} &&& {\textup{Fun}(B\hat G,\textup{Perf}(KU_p))}
	\arrow["{-\rtimes G}" ,"\sim"', from=1-1, to=1-4]
	\arrow["{r_G}"', from=1-1, to=4-1]
	\arrow["{r_{\hat G}}", from=1-4, to=4-4]
	\arrow["\sim","{(-\otimes_{KU_p[G]}\textup{\textbf{M}})_p}"', from=4-1, to=4-4]
\end{tikzcd}\]
\end{theorem}
Both the horizontal arrows are equivalences - the upper one is that of Takai duality, and the lower one is that of Treumann duality.

\textit{Acknowledgements: The author is very grateful to Ulrich Bunke for proposing this problem, his extensive help with ideas and suggestions, and reviewing preliminary drafts in detail. The author also thanks Benjamin D\"unzinger and Fabian Hebestreit for helpful discussions, Christoph Winges for explaining a proof of Proposition \ref{long}., and Anupam Datta and Paul Arne \O stv\ae r for useful comments. The author is also very thankful to the referee for suggesting numerous improvements and explaining a proof of Proposition \ref{4.1}. The author also thanks the editors for pointing out the relevant reference \cite{classically}.}

\textit{The author was supported by the German Research Foundation (DFG) through the collaborative research centre “Integral structures in Geometry and
Representation Theory” (grant no. TRR 358–491392403) at the University of Bielefeld. The author was previously supported by scholarships from ERASMUS+ and the ALGANT consortium.}
\section{The Crossed Product Functor}
Throughout this section, we denote by $G$ a locally compact, second countable, abelian group with identity element $e$ and its Pontryagin dual by $\hat{G}$. The dual group $\hat G$ is endowed with the compact-open topology. We fix Haar measures $\mu$ and $\hat \mu$ on $G$ and $\hat G$, respectively. These are unique up to multiplication by a positive scalar.
\subsection{A stable $\infty$-category for equivariant  $KK$-theory}\label{twoone}
In this subsection, we will introduce the definition and state the universal property of the stable $\infty$-category $KK^G$ referenced in the introduction. In the case that $G$ is discrete, a reference is \cite{bunke2023stable}. In the generality of locally compact groups, however, we follow \cite[Sec. 2.1]{bunke2023ktheory} which asserts that things work similarly, and mentions that a reference is in preparation.

We will denote by $C^{*}\text{Alg}$ the category with objects the unital $C^{*}$-algebras and morphisms the unital $*$-algebra homomorphisms, whereas $C^*\text{Alg}^{nu}$ will refer to the category with objects the $C^{*}$-algebras (not necessarily unital) and morphisms the $*$-algebra homomorphisms\footnote{The notation $C^{*}\text{Alg}^{nu}$ is not to be confused with that of nuclear $C^{*}$-algebras. We are following the convention in \cite{bunke2023stable}.}. We also recall the notion of a pre-$C^{*}$-algebra - it is a $*$-algebra over $\mathbb C$ each element of which has a finite maximal norm\footnote{Let $A$ be a $*$-algebra over $\mathbb C$. For an element $a$ of $A$, we define its maximal norm to be $\text{sup}||\rho(a)||$, where the supremum is taken over all representations $\rho\colon A \to B$, where $B$ is a $C^{*}$-algebra.}. One has a category $C^{*}_{\text{pre}}\text{Alg}^{nu}$ of pre-$C^{*}$-algebras and $*$-algebra homomorphisms. The fully faithful inclusion $C^{*}\text{Alg}^{nu} \hookrightarrow C^{*}_{\text{pre}}\text{Alg}^{nu}$ admits a left adjoint $\hypertarget{shreehari}{\text{compl}}\colon C^{*}_{\text{pre}}\text{Alg}^{nu} \to C^{*}\text{Alg}^{nu}$, which acts on objects by completing with respect to the maximal norm (see Section 2 of \cite{bunke2024kk}).

We denote by $GC^*\text{Alg}^{nu}$ the following category. The objects are pairs of the form $(A,\alpha)$, where $A$ is an object of $C^{*}\text{Alg}^{nu}$ and $\alpha\colon G \to \text{Aut}_{C^{*}\text{Alg}^{nu}}(A,A)$ is a group homomorphism, such that for each element $a$ of $A$, the function $G \to A$ given by $g \mapsto \alpha_g(a)$ is continuous. At times we will be sloppy and simply write $A$ instead of $(A,\alpha)$. The morphisms of $GC^*\text{Alg}^{nu}$ are the $G$-equivariant $C^{*}$-algebra homomorphisms. When $G$ is discrete, $GC^{*}\text{Alg}^{nu}$ can be identified with the functor category $\text{Fun}(BG,C^{*}\text{Alg}^{nu})$. The category $GC^{*}\text{Alg}^{nu}$ has two different symmetric monoidal structures given by the minimal and maximal tensor products. However, in this paper, for simplicity, we will work exclusively with the maximal tensor product, which we denote simply by $-\otimes-$.  

Let $GC^*\text{Alg}^{nu}_{sep}$ denote the full subcategory of $GC^*\text{Alg}^{nu}$ consisting of the separable algebras.
\begin{ddd}\label{fund} Let $D$ be an $\infty$-category, and $F\colon GC^*\textup{Alg}^{nu}_{sep} \to D$ a functor.
\begin{enumerate}
\item The functor $F$ is said to be homotopy invariant if it sends homotopy equivalences to equivalences. The notion of homotopy is defined with respect to the point-norm topology on the mapping sets in $GC^*\textup{Alg}^{nu}_{sep}$.
\item Let $K_G\coloneq K(L^2(G) \otimes l^2) \in GC^*\textup{Alg}^{nu}$ be the algebra of compact operators on the Hilbert space $L^2(G) \otimes l^2$, with group action induced by conjugation of the natural action on $L^2(G)$. A morphism $f\colon A \to B$ in $GC^*\textup{Alg}^{nu}_{sep}$ is a $K_G$-equivalence if $f \otimes K_G$ is a homotopy equivalence.
\item The functor $F$ is said to be $G$-stable if for every equivariant isometric embedding of separable unitary\footnote{Here, unitary means that $G$ acts by unitaries on $V'$ and $V$.} $G$-Hilbert spaces $V' \hookrightarrow V$, with $V' \neq 0$, and for all objects $A$ of $GC^{*}\text{Alg}^{nu}$, the induced map $F(A \otimes K(V')) \to F(A \otimes K(V))$ is an equivalence.
\end{enumerate}
\end{ddd}
\begin{prop}\label{eqstab}
    A homotopy invariant functor $F$ inverts $K_G$-equivalences if and only if it is $G$-stable.
\end{prop}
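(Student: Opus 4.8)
The plan is to derive both directions from a single structural fact: that $K_G$ is \emph{homotopy-absorbing}. Write $\mathcal{H}_G := L^2(G)\otimes l^2$, so $K_G = K(\mathcal{H}_G)$. I would first record the Fell absorption principle: for every nonzero separable unitary $G$-Hilbert space $V$ there is a $G$-equivariant unitary $V\otimes\mathcal{H}_G\cong\mathcal{H}_G$, and hence a $G$-equivariant isomorphism $K(V)\otimes K_G\cong K_G$. Then I would prove the key lemma: for every equivariant isometric embedding $\iota\colon V'\hookrightarrow V$ of separable unitary $G$-Hilbert spaces with $V'\neq 0$, and every $A$, the induced homomorphism $A\otimes K(V')\to A\otimes K(V)$ is a $K_G$-equivalence. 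To do so, tensor with $K_G$ and pull out $\mathrm{id}_A$ (tensoring with a fixed algebra preserves homotopy equivalences); using $K(W)\otimes K_G\cong K(W\otimes\mathcal{H}_G)$, this reduces the claim to showing that $K(V'\otimes\mathcal{H}_G)\to K(V\otimes\mathcal{H}_G)$ is a $G$-homotopy equivalence. By Fell absorption both algebras are isomorphic to $K_G$, and under such isomorphisms the map is carried to an endomorphism of $K_G$ of the form $T\mapsto wTw^{*}$ for some $G$-equivariant isometry $w\colon\mathcal{H}_G\to\mathcal{H}_G$; so it suffices to know that every such $\mathrm{Ad}(w)$ is $G$-homotopic to $\mathrm{id}_{K_G}$, which holds because $w$ can be joined to $1$ through $G$-equivariant isometries of $\mathcal{H}_G$, conjugation by such a path being a homotopy of $*$-homomorphisms in the sense of Definition~\ref{fund}.

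Granting the key lemma, the implication ``$F$ inverts $K_G$-equivalences $\Rightarrow$ $F$ is $G$-stable'' is immediate: the comparison map $A\otimes K(V')\to A\otimes K(V)$ appearing in the definition of $G$-stability is a $K_G$-equivalence, so a homotopy invariant functor that inverts $K_G$-equivalences sends it to an equivalence.

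For the converse, suppose $F$ is homotopy invariant and $G$-stable, and set $V:=\mathbb{C}\oplus\mathcal{H}_G$ with $G$ acting trivially on $\mathbb{C}$. The equivariant isometric embeddings $\mathbb{C}\hookrightarrow V$ and $\mathcal{H}_G\hookrightarrow V$ (both with nonzero source) give, after applying $F$, natural transformations $F(-\otimes K(\mathbb{C}))\to F(-\otimes K(V))$ and $F(-\otimes K_G)\to F(-\otimes K(V))$, both objectwise equivalences by $G$-stability; since $F(-\otimes K(\mathbb{C}))\simeq F$ canonically, these assemble into a natural equivalence $\theta\colon F\xrightarrow{\sim}F(-\otimes K_G)$. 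Now if $f\colon A\to B$ is a $K_G$-equivalence, then $f\otimes\mathrm{id}_{K_G}$ is a homotopy equivalence, so $F(f\otimes\mathrm{id}_{K_G})$ is an equivalence by homotopy invariance; the naturality square of $\theta$ at $f$, with vertical equivalences $\theta_A,\theta_B$ and lower edge $F(f\otimes\mathrm{id}_{K_G})$, then forces $F(f)$ to be an equivalence by two-out-of-three.

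The only substantial work is the key lemma, and within it the claim that every $G$-equivariant isometry of $\mathcal{H}_G$ is homotopic to the identity through $G$-equivariant isometries; the delicate point is choosing the topology on isometries so that conjugation yields a genuine point-norm homotopy of $*$-homomorphisms. For abelian $G$ this is transparent in the Fourier picture, in which $G$-equivariant operators on $L^2(G)\otimes l^2$ are precisely the decomposable operators $\int^{\oplus}T(\chi)\,d\chi$ over $\hat G$: one contracts the measurable field of isometries fibrewise inside the contractible space of isometries of $l^2$ and then verifies point-norm continuity after reducing to finite-rank operators. Alternatively, one may invoke the $G$-stability statements already recorded in \cite{bunke2023stable} and \cite{bunke2023ktheory}. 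Everything else is formal bookkeeping with two-out-of-three and naturality.
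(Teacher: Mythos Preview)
Your argument for the direction ``$G$-stable $\Rightarrow$ inverts $K_G$-equivalences'' is essentially identical to the paper's: both use the zig-zag through $K(\mathbb{C}\oplus\mathcal{H}_G)$ and two-out-of-three. For the converse, both you and the paper reduce to the same key lemma---that the comparison map $A\otimes K(V')\to A\otimes K(V)$ is a $K_G$-equivalence---but the proofs of that lemma diverge. The paper first untwists the $G$-action via the explicit equivariant isomorphism $L^2(G)\otimes H\cong L^2(G)\otimes\mathrm{Res}^G(H)$, $f(h)\mapsto\rho(h)^{-1}f(h)$, thereby reducing to the case of trivial $G$-actions on $V'$ and $V$; there $K(V')\otimes K\to K(V)\otimes K$ is a left-upper-corner inclusion, which becomes a homotopy equivalence after one further stabilisation by $K$, and one concludes using $K_G\cong K_G\otimes K\otimes K$. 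This is entirely elementary and sidesteps any analysis of spaces of isometries. Your route through Fell absorption and path-connectedness of the $G$-equivariant isometries of $\mathcal{H}_G$ is correct in outline and arguably more conceptual (it explains \emph{why} the lemma holds: $K_G$ is absorbing and $\mathrm{Ad}$ of any equivariant isometry is null-homotopic), but the ``delicate point'' you flag---producing a strong-$*$ continuous path of equivariant isometries so that conjugation gives a genuine point-norm homotopy, for possibly non-discrete $G$---is real work, and is exactly what the paper's short reduction avoids.
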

\begin{proof}
    First, suppose that $F$ is $G$-stable. Let $f\colon A \to B$ be a $K_{G}$-equivalence. Write $\hat K_G$ for $K(\mathbb C \oplus (L^2(G) \otimes l^2))$ with the obvious $G$-action, so that one has equivariant isometric inclusions $\mathbb C \hookrightarrow \hat K_G$ and $K_G \hookrightarrow \hat K_G$. This gives a commutative diagram 
\[\begin{tikzcd}
	A && {A \otimes \hat K_G} && {A \otimes K_G} \\
	B && {B \otimes \hat K_G} && {B \otimes K_G}
	\arrow[from=1-1, to=1-3]
	\arrow[from=1-5, to=1-3]
	\arrow["f", from=1-1, to=2-1]
	\arrow["{f \otimes \hat K_G}", from=1-3, to=2-3]
	\arrow["{f \otimes K_G}", from=1-5, to=2-5]
	\arrow[from=2-1, to=2-3]
	\arrow[from=2-5, to=2-3]
\end{tikzcd}\]
By assumption, $F$ inverts all the horizontal arrows and the right vertical arrow (the latter because $f \otimes K_G$ is a homotopy equivalence and $F$ is homotopy invariant). Therefore, it inverts $f$.

Conversely, suppose that $F$ inverts $K_{G}$-equivalences. It suffices to show that $K(V') \to K(V)$ is a $K_{G}$-equivalence, where $V'$ and $V$ are as in Definition \ref{fund}.3.

Let $K$ denote $K(l^2)$. Suppose the $G$-actions on $V$ and $V'$ are trivial. Then $K(V') \otimes K \to K(V) \otimes K$ is isomorphic to a left upper corner inclusion, and therefore becomes a homotopy equivalence on tensoring again by $K$. As $K_G \cong K_G \otimes K \otimes K$, we conclude the result in this case.

In general, given a Hilbert space $H$ with $G$-action $\rho$, we can identify $L^2(G) \otimes H$ with $L^2(G,H)$ (the $G$-action given on the latter by $g(f(h))\coloneq \rho_{g}f(g^{-1}h)$), and in particular, $L^2(G) \otimes \text{Res}^{G}(H)$ with $L^2(G,\text{Res}^{G}(H))$, where $\text{Res}^{G}(H)$ is the Hilbert space $H$ with trivial $G$-action. There is a natural equivariant isomorphism $\phi_H\colon L^2(G) \otimes H \cong L^2(G) \otimes \text{Res}^{G}(H)$ given by the formula $f(h) \mapsto \rho(h)^{-1}f(h)$, which in turn induces an isomorphism $K(H) \otimes K_G \cong \text{Res}^{G}(K(H)) \otimes K_{G}$, and so we reduce to the previous case.
\end{proof}
We write $\hypertarget{hom}{GC^{*}\text{Alg}^{nu}_{h,sep}}$ for the localization\footnote{We always refer to the Dwyer-Kan localization when we write ``localization". See \cite[\href{https://kerodon.net/tag/01N0}{Tag 01N0}]{kerodon} and \cite[\href{https://kerodon.net/tag/01N1}{Tag 01N1}]{kerodon} for an existence and uniqueness statement.} of $GC^{*}\text{Alg}^{nu}_{sep}$ by the homotopy equivalences, and $\hypertarget{stable}{L_{K_G}GC^*\text{Alg}^{nu}_{h,sep}}$ for the localization of $GC^{*}\text{Alg}^{nu}_{sep}$ by the homotopy equivalences and the $K_G$-equivalences. We have a sequence of localizations 
\[\begin{tikzcd}
	{L_{h,K_G}\colon GC^*\text{Alg}^{nu}_{sep}} && {GC^*\text{Alg}^{nu}_{h,sep}} && {L_{K_G}GC^*\text{Alg}^{nu}_{h,sep}}
	\arrow["{L_h}", from=1-1, to=1-3]
	\arrow["{L_{K_G}}", from=1-3, to=1-5]
\end{tikzcd}\]
\begin{ddd} Let $D$ be a pointed $\infty$-category, and $F\colon GC^*\textup{Alg}^{nu}_{sep} \to D$ a reduced functor (that is, $F(0)$ is a zero object).
 An exact sequence $0 \to A \to B \to C \to 0$ in $GC^*\textup{Alg}^{nu}_{sep}$ is semi-split if there exists a completely positive and contractive $G$-equivariant section $C \to B$. The functor $F$ is said to be semi-exact if it takes semi-split exact sequences to fibre sequences. 
\end{ddd}
We remark that completely analogous definitions can be made for $GC^{*}\text{Alg}^{nu}$ in place of $GC^*\text{Alg}^{nu}_{sep}$.

If $$\mathcal{E} \colon 0 \to A \to B \xrightarrow{f} C \to 0$$ is an exact sequence in $GC^{*}\text{Alg}^{nu}$, there is a natural induced map $i_f\colon A \to C(f)$, where $C(f)$ is the cone on $f$. Let $W'$ denote the class of morphisms of the form $L_{h,K_G}(i_f)$ taken over all semi-split exact sequences $\mathcal{E}$. Write $W$ for the closure of $W'$ by the 2-out-of-3 property and pullbacks.

Let $$L_W\colon L_{K_G}GC^*\text{Alg}^{nu}_{h,sep} \to L_{K_G}GC^*\text{Alg}^{nu}_{h,sep}[W^{-1}]$$ be the localization by the class $W$. The $\infty$-category $L_{K_G}GC^*\text{Alg}^{nu}_{h,sep}[W^{-1}]$ is left-exact and semi-additive (see \cite[Cor. 4.3.2]{bunke2024kk} and \cite[Prop. 4.4]{bunke2024kk} in the case that $G$ is trivial and item 2 at the beginning of \cite[Sec. 3.4]{bunke2024etheory} for discrete $G$). There is a Bousfield localization $$\text{incl}\colon(L_{K_G}GC^*\text{Alg}^{nu}_{h,sep}[W^{-1}])^{\text{grp}} \leftrightarrows L_{K_G}GC^*\text{Alg}^{nu}_{h,sep}[W^{-1}]\colon \Omega^2$$ where the superscript grp indicates the full subcategory of group objects (see \cite[Prop. 7.2]{bunke2024kk} in the case that $G$ is trivial, and item 4 at the beginning of \cite[Sec 3.4]{bunke2024etheory} for an analogous statement for $E$-theory (for discrete $G$)).

We define the $\infty$-category $KK^{G}_{sep} \coloneq (L_{K_G}GC^*\text{Alg}^{nu}_{h,sep}[W^{-1}])^{\text{grp}}$, and the functor $kk^G_{sep}$ as the composite 
\[\begin{tikzcd}
	{GC^*\text{Alg}^{nu}_{sep}} && {L_{K_G}GC^*\text{Alg}^{nu}_{h,sep}} && {L_{K_G}GC^*\text{Alg}^{nu}_{h,sep}[W^{-1}]} && {KK^{G}_{sep}}
	\arrow["{L_{h,K_G}}", from=1-1, to=1-3]
	\arrow["{L_{W}}", from=1-3, to=1-5]
	\arrow["{\Omega^2}", from=1-5, to=1-7]
\end{tikzcd}\]

The $\infty$-category $KK^{G}_{sep}$ is characterized essentially uniquely by the following universal property. 
\begin{theorem}
$KK^G_{sep}$ is a stable $\infty$-category, the functor $kk^G_{sep}$ is homotopy invariant, $G$-stable and semi-exact; and for any stable $\infty$-category $D$, pre-composition by $kk^G_{sep}$ induces an equivalence of functor categories$$ \textup{Fun}^{ex}(KK^G_{sep},D) \simeq \textup{Fun}^{h,Gs,se}(GC^*\text{Alg}^{nu}_{sep}, D)$$The superscripts $\{h,Gs,se\}$ refer to the full subcategory of homotopy invariant, $G$-stable, semi-exact functors; the superscript $ex$ denotes the full subcategory of exact funtors.
\end{theorem}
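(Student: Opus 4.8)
The plan is to peel off the three localizations
$GC^*\text{Alg}^{nu}_{sep} \xrightarrow{L_{h,K_G}} C_1 \xrightarrow{L_W} C_2 \xrightarrow{\Omega^2} KK^G_{sep}$ one at a time (writing $C_1 = L_{K_G}GC^*\text{Alg}^{nu}_{h,sep}$ and $C_2 = L_{K_G}GC^*\text{Alg}^{nu}_{h,sep}[W^{-1}]$), at each stage translating the class of inverted morphisms into a property of functors on $GC^*\text{Alg}^{nu}_{sep}$. Before that I would dispatch the easy assertions. That $KK^G_{sep}$ is stable is formal: it is the $\infty$-category of group objects in the pointed, left-exact, semi-additive $\infty$-category $C_2$ (cited above), hence is pointed and has finite limits, and on it the loop functor $\Omega$ is an equivalence, so it is stable. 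For $kk^G_{sep} = \Omega^2 \circ L_W \circ L_{h,K_G}$: homotopy invariance holds because $L_{h,K_G}$ already inverts homotopy equivalences; $G$-stability follows from Proposition \ref{eqstab}, since $kk^G_{sep}$ is homotopy invariant and $L_{h,K_G}$ inverts $K_G$-equivalences by construction; and semi-exactness holds because for a semi-split exact sequence $\mathcal E$ the map $L_{h,K_G}(i_f)$ lies in $W'$ and is therefore inverted by $L_W$, which exhibits $\mathcal E$ as a fibre sequence in $C_2$, and $\Omega^2$ preserves fibre sequences, being a right adjoint. (One also notes each localization preserves the zero object, so $kk^G_{sep}$ is reduced.)

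For the equivalence of functor categories, fix a stable $\infty$-category $D$. Step one: by the universal property of Dwyer--Kan localization, precomposition with $L_{h,K_G}$ identifies $\text{Fun}(C_1, D)$ with the full subcategory of $\text{Fun}(GC^*\text{Alg}^{nu}_{sep}, D)$ on the functors inverting homotopy equivalences and $K_G$-equivalences, which by Proposition \ref{eqstab} is precisely $\text{Fun}^{h,Gs}(GC^*\text{Alg}^{nu}_{sep}, D)$. Step two: precomposition with $L_W$ identifies $\text{Fun}(C_2, D)$ with the full subcategory of those $F$ whose extension to $C_1$ inverts $W$; here I would verify that, for $F$ homotopy invariant and $G$-stable and $D$ finitely complete, inverting $W$ is the same as inverting the generating maps $\{i_f\}$, and that $F$ inverts every $i_f$ exactly when it carries every semi-split exact sequence to a fibre sequence --- this being the mapping-cone/Puppe bookkeeping that the definition of $W'$ is designed to encode. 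Together this gives $\text{Fun}(C_2, D) \simeq \text{Fun}^{h,Gs,se}(GC^*\text{Alg}^{nu}_{sep}, D)$ for any stable (indeed finitely complete) $D$.

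Step three is the passage to group objects, and this is where stability of $D$ enters essentially and where I expect the bulk of the work. Via the Bousfield localization $\text{incl} : KK^G_{sep} \leftrightarrows C_2 : \Omega^2$ recalled above, precomposition with $\Omega^2$ identifies $\text{Fun}(KK^G_{sep}, D)$ with the functors $C_2 \to D$ inverting the morphisms inverted by $\Omega^2$; and, since $\text{incl}$ and $\Omega^2$ are both left-exact and $D$ is stable (so left-exact $=$ exact for functors out of $KK^G_{sep}$), an exact functor out of $KK^G_{sep}$ corresponds under this identification to a left-exact functor $C_2 \to D$. I would then argue that a left-exact functor $C_2 \to D$ with $D$ stable automatically inverts the relevant localizing maps --- exactly what the cited Bousfield localization is set up to guarantee --- so that $\text{Fun}^{ex}(KK^G_{sep}, D)$ is identified with $\text{Fun}^{\text{lex}}(C_2, D)$; and finally that, for $\Phi : C_2 \to D$, left-exactness is equivalent to $\Phi \circ L_W \circ L_{h,K_G}$ being semi-exact. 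One direction is immediate: a left-exact $\Phi$ sends the fibre sequences coming from semi-split extensions to fibre sequences. The converse --- that a $\Phi$ known only to send the fibre sequences of semi-split extensions (and, via split extensions, finite products) to the corresponding limits is already left-exact --- is the crux: it requires knowing that the finite limits of $C_2$ are generated, starting from the images $kk^G_{sep}(A)$ of separable algebras, by those particular fibre sequences and finite products. This structural fact about the construction of $C_2$, together with the compatibility of the three universal properties, is what I expect to be the technical heart of the proof; the first two localization steps are essentially formal once Proposition \ref{eqstab} and the cited properties of $C_1$ and $C_2$ are in hand.
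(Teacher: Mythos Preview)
The paper does not actually prove this theorem: it is stated as background, with the sentence ``All the material here is contained in \cite{bunke2023stable} and \cite{bunke2023ktheory}'' at the start of Section~\ref{twoone}, and the proof is deferred to those references (together with \cite{bunke2024kk} for the non-equivariant prototype). So there is no proof in the paper to compare against; your outline is essentially a reconstruction of the argument in the cited sources.

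That said, your three-step decomposition is the right shape, and Steps~1 and~2 are accurate. A word of caution on Step~3: you describe $\Omega^2$ as the right adjoint in the Bousfield pair and then speak of ``precomposition with $\Omega^2$'' as picking out functors that invert a class of maps. That is the behaviour of a \emph{left} Bousfield localization, i.e.\ of a reflector. In the references the group-completion $\Omega^2 : C_2 \to KK^G_{sep}$ is the reflector (left adjoint) onto the full subcategory of group objects, with $\mathrm{incl}$ the fully faithful right adjoint; the display in the paper is written in the ambiguous two-sided form and can be misread. Once you have this straight, the identification $\mathrm{Fun}^{\mathrm{ex}}(KK^G_{sep},D) \simeq \mathrm{Fun}^{\mathrm{lex}}(C_2,D)$ for stable $D$ follows because any left-exact functor to a stable target automatically factors through the group-completion (every object of $D$ is already a group object), which is cleaner than the ``inverts the morphisms inverted by $\Omega^2$'' phrasing you give. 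The genuine technical content you flag at the end---that semi-exactness of $\Phi \circ L_W \circ L_{h,K_G}$ forces left-exactness of $\Phi$ on $C_2$---is indeed where the work lies, and is handled in \cite{bunke2024kk} by analysing how the fibre sequences in $C_2$ are generated from semi-split extensions.
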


We define $KK^G \coloneq \text{Ind}(KK^G_{sep})$ as the $\text{Ind}$-completion, with $y\colon KK^G_{sep} \to KK^G$ the Yoneda embedding. We define the functor $kk^G\colon GC^*\text{Alg}^{nu} \to KK^G$ by left-Kan extension as indicated below:
\[\begin{tikzcd}
	{GC^*\text{Alg}^{nu}_{sep}} && {GC^*\text{Alg}^{nu}} && {KK^G} \\
	\\
	&& {KK^G_{sep}}
	\arrow[hook, from=1-1, to=1-3]
	\arrow["{kk^G}", dashed, from=1-3, to=1-5]
	\arrow["{kk^G_{sep}}"', from=1-1, to=3-3]
	\arrow["y"', hook, from=3-3, to=1-5]
\end{tikzcd}\]
We need another preliminary notion to characterize $KK^G$.
\begin{ddd} Let $D$ be a cocomplete $\infty$-category, and $F\colon GC^*\textup{Alg}^{nu} \to D$ a functor.
         $F$ is said to be $s$-finitary if the natural map $$\textup{colim}_{A' \subset_{sep} A}F(A') \to F(A)$$ is an equivalence, for all objects $A$ of $GC^*\textup{Alg}^{nu}$. The colimit here is taken over all separable $G$-$C^{*}$-subalgebras of $A$. 
\end{ddd} The $\infty$-category $KK^{G}$ is determined essentially uniquely by the following universal property (see \cite[Eq. 2.2]{bunke2023ktheory}, and \cite[Thm. 3.3]{bunke2023stable} for a proof in the discrete case\footnote{One uses the fact that homotopy invariance is automatic from the other conditions when $G$ is discrete, see \cite[Rem. 2.3]{bunke2023stable}}).

\begin{theorem}{$KK^G$ is a stable and cocomplete $\infty$-category, the functor $kk^G$ is homotopy invariant, $G$-stable, semi-exact and $s$-finitary; and for any stable, cocomplete $\infty$-category $D$, pre-composition by $kk^G$ induces an equivalence $$ \textup{Fun}^{colim}(KK^G,D) \simeq \textup{Fun}^{h,Gs,se,sfin}(GC^*\textup{Alg}^{nu}, D)$$}The superscripts $\{h,Gs,se, sfin\}$ refer to the full subcategory of homotopy invariant, $G$-stable, semi-exact and $s$-finitary functors; the superscript $colim$ denotes the full subcategory of colimit-preserving funtors.
\end{theorem}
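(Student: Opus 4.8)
The plan is to reduce the statement, via two formal inputs, to the already-established universal property of $KK^G_{sep}$. The two inputs are the universal property of the $\textup{Ind}$-completion and a ``separabilisation'' comparison between functors on all $G$-$C^{*}$-algebras and functors on separable ones. I first dispose of the structural claims. Since $GC^{*}\text{Alg}^{nu}_{sep}$ is essentially small, so is its localisation $KK^G_{sep}$; and as $KK^G_{sep}$ is stable, its $\textup{Ind}$-completion $KK^G=\textup{Ind}(KK^G_{sep})$ is a compactly generated stable $\infty$-category, hence stable and cocomplete (a standard property of $\textup{Ind}$-completions of small stable $\infty$-categories). The Yoneda embedding $y$ is fully faithful and exact.

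For the universal property, fix a stable cocomplete $D$. The universal property of the $\textup{Ind}$-completion (as the free cocompletion under filtered colimits, combined with the fact that $KK^G_{sep}$ is stable and $y$ preserves finite colimits) yields a restriction equivalence $$\text{Fun}^{colim}(KK^G,D)\xrightarrow{\ \sim\ }\text{Fun}^{ex}(KK^G_{sep},D),\qquad \Phi\mapsto\Phi\circ y.$$ Composing with the universal property of $KK^G_{sep}$ (the preceding theorem) gives $\text{Fun}^{ex}(KK^G_{sep},D)\simeq\text{Fun}^{h,Gs,se}(GC^{*}\text{Alg}^{nu}_{sep},D)$ via precomposition by $kk^G_{sep}$. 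It then remains to identify $\text{Fun}^{h,Gs,se}(GC^{*}\text{Alg}^{nu}_{sep},D)$ with $\text{Fun}^{h,Gs,se,sfin}(GC^{*}\text{Alg}^{nu},D)$ in a way that makes the total composite precomposition by $kk^G=\iota_!(y\circ kk^G_{sep})$.

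The crux is thus to show that restriction along the fully faithful inclusion $\iota:GC^{*}\text{Alg}^{nu}_{sep}\hookrightarrow GC^{*}\text{Alg}^{nu}$ is an equivalence $\iota^{*}:\text{Fun}^{h,Gs,se,sfin}(GC^{*}\text{Alg}^{nu},D)\xrightarrow{\sim}\text{Fun}^{h,Gs,se}(GC^{*}\text{Alg}^{nu}_{sep},D)$ with inverse the left Kan extension $\iota_!$. The poset of separable $G$-$C^{*}$-subalgebras $A'\subseteq A$ is filtered, and its inclusion into the comma category $\iota_{/A}$ is cofinal (any $f:A'\to A$ factors through the separable $G$-subalgebra $\overline{f(A')}$), so $(\iota_! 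F)(A)=\text{colim}_{A'\subset_{sep}A}F(A')$. This at once makes $\iota_! F$ $s$-finitary, makes the unit $F\to\iota^{*}\iota_! F$ an equivalence (as $\iota$ is fully faithful), and makes the counit $\iota_!\iota^{*}H\to H$ an equivalence for $s$-finitary $H$ (by the very definition of $s$-finitary). Homotopy invariance and $G$-stability pass through this colimit formula cheaply: the cylinder $-\otimes C[0,1]$ and the stabilisations $-\otimes K(V)$ for separable $V$ commute with filtered unions of separable subalgebras, so each test diagram for $\iota_! F$ is a filtered colimit of the corresponding test diagrams for $F$, and a filtered colimit of equivalences is an equivalence.

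The main obstacle I expect is the preservation of semi-exactness. I must show that every semi-split exact sequence $0\to A\to B\to C\to 0$ in $GC^{*}\text{Alg}^{nu}$ is a filtered colimit of semi-split exact sequences of \emph{separable} algebras, compatibly with the completely positive contractive equivariant splitting. Concretely, starting from a separable $C'\subseteq C$ one inductively enlarges it, together with separable subalgebras of $A$ and $B$ and the image of the splitting, to obtain a cofinal system of genuine semi-split short exact sequences $0\to A'\to B'\to C'\to 0$ of separable $G$-$C^{*}$-algebras whose colimit is the original sequence; this separabilisation technique is standard, and I would follow the non-equivariant argument in the cited references while tracking the $G$-action. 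Applying the $s$-finitary formula and the semi-exactness of $F$ termwise then presents $\iota_! F$ of the sequence as a filtered colimit of fibre sequences in $D$, which is a fibre sequence since filtered colimits are exact in the stable cocomplete $D$; hence $\iota_! F$ is semi-exact. Finally, applying $\iota_!$ to $y\circ kk^G_{sep}$ (which is homotopy invariant, $G$-stable and semi-exact because $kk^G_{sep}$ is and $y$ is exact) shows $kk^G$ enjoys all four properties, and unwinding the three equivalences confirms that the total composite is precomposition by $kk^G$.
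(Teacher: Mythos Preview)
The paper does not supply a proof of this theorem; it is stated as a background result imported from \cite{bunke2023stable} and \cite{bunke2023ktheory} (see the opening sentence of Section~\ref{twoone}). There is therefore nothing in the present paper to compare your argument against.

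That said, your proposal is the correct strategy and matches the approach in those references: reduce to the separable universal property via the Ind/Yoneda adjunction, and then identify functors on separable algebras with $s$-finitary functors on all algebras through the left Kan extension formula $(\iota_!F)(A)\simeq\operatorname{colim}_{A'\subset_{sep}A}F(A')$. Your identification of semi-exactness as the delicate step is accurate; the separabilisation of semi-split exact sequences is exactly the technical lemma one needs, and it is proved in the cited sources. One small point worth tightening: for the cofinality claims you invoke (``the cylinder and stabilisations commute with filtered unions of separable subalgebras''), you should record explicitly that the separable $G$-invariant subalgebras of $A\otimes C[0,1]$ of the form $A'\otimes C[0,1]$, and likewise those of $A\otimes K(V)$ of the form $A'\otimes K(V)$, are cofinal in all separable $G$-invariant subalgebras---this uses second countability of $G$ to ensure the $G$-invariant $C^{*}$-subalgebra generated by a separable set is again separable, which you implicitly use elsewhere too.
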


We remark that $KK^G$ also admits a symmetric monoidal structure and that $kk^G$ can be refined to a symmetric monoidal functor, and an analogous universal property can be stated in this situation. See \cite[Prop. 3.8]{bunke2023stable} for a proof in the discrete case, \cite[Eq. 2.3]{bunke2023ktheory} for a formulation of the universal property and \cite[Thm. 8.5]{bunke2024kk} for a proof thereof in the non-equivariant setting. 

We denote $KK^{\{e\}}$ simply by $KK$, where $\{e\}$ is the trivial group. For an object $A$ of $KK$, we write $K(A)$ ``the $K$-theory spectrum of $A$" for the mapping spectrum $KK(\mathbb C,A)$. We denote the spectrum $K(\mathbb C)$ by $KU$; this coincides with the $2$-periodic topological complex $K$-theory spectrum (see \cite[Rem. 9.19]{bunke2024kk}). 
The spectrum-valued functor $K(-)$ naturally refines to a functor valued in the $\infty$-category $\text{Mod}(KU)$ of modules over the commutative ring spectrum $KU$.

A detailed account of the properties of $K(-)$ is presented in \cite{bunke2023survey}.
\subsection{The Crossed Product Functor}\label{twotwo}
For a $G$-$C^{*}$-algebra $A$, one can consider the set $C_c(G,A)$ of compactly supported continuous $A$-valued functions on $G$. This set acquires the structure of a ${*}$-algebra over $\mathbb C$ with the following definitions of the algebraic operations. For $f,f' \in C_c(G,A)$ and $g \in G$, we set:
\begin{itemize}
    \item $(f+f')(g)\coloneq f(g) + f'(g)$
    \item $(f*f')(g)\coloneq \int_{G}f(h)\alpha_h(f'(h^{-1}g))\mu(h)$
    \item $f^{*}(g)\coloneq \alpha_g(f(g^{-1})^{*})$
\end{itemize}
With this algebraic structure, one can check that $C_c(G,A)$ is a pre-$C^{*}$-algebra - in fact, for $f \in C_c(G,A)$, the maximal norm of $f$ is bounded by its $L_1$-norm. We define the (maximal)\footnote{There is also a variant called the reduced crossed product $A \rtimes_{r} G$ obtained by completing $C_c(G,A)$ with respect to a particular representation. However, we shall work exclusively with the maximal crossed product for simplicity, and because of its compatibility with the maximal tensor product (Lemma \ref{2.5}).} crossed product $A \rtimes G$ to be the completion $\hyperlink{shreehari}{\text{compl}}(C_c(G,A))$\footnote{One can more generally define $A \rtimes G$ even for non-abelian $G$.}. This yields a functor $-\rtimes G\colon GC^*\text{Alg}^{nu} \to C^*\text{Alg}^{nu}$.

We will refine the functor $-\rtimes G$ to take values in $\hat{G}C^*\text{Alg}^{nu}$, and show that it descends essentially uniquely to a functor $KK^G \to KK^{\hat G}$. 
\begin{rem}\label{baajskandalis}
    At the level of $KK$-theory groups, this is classical work by Baaj and Skandalis. They construct (cf. \cite[Thm. 6.19]{classically} and note that $\mathbb C \rtimes G \cong C_0(\hat G)$ as $G$ is assumed to be abelian), for separable algebras $A$ and $B$, natural homomorphisms $J_G\colon KK^{G}_{*}(A,B) \to KK^{\hat G}_{*}(A \rtimes G, B \rtimes G)$ and $J_{\hat G} \colon KK^{\hat G}_{*}(A \rtimes G, B \rtimes  G) \to KK^{G}_{*}(A,B)$. The functors $- \rtimes G \colon KK^{G} \to KK^{\hat G}$ and $- \rtimes \hat G \colon KK^{\hat G} \to KK^{G}$ that we will construct in the remainder of this section serve as (higher) categorical refinements to the maps $J_{G}$ and $J_{\hat G}$, respectively.
\end{rem}

To begin with, consider a character $\chi$ in $\hat{G}$, and an element $f$ of $C_c(G,A)$, where $A$ is a $G$-$C^{*}$-algebra. We define $\chi f \in C_c(G,A)$ by pointwise multiplication with the character value: $$(\chi f)(g)\coloneq \chi(g)f(g)$$
\begin{prop}\label{2.1}
    The rule defined above gives a strongly continuous action of $\hat{G}$ on $A \rtimes G$.
\end{prop}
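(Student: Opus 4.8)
The plan is to proceed in three stages. First I would show that for each fixed $\chi \in \hat G$ the assignment $f \mapsto \chi f$ is a $*$-algebra automorphism of the pre-$C^*$-algebra $C_c(G,A)$. Then I would promote this to an automorphism of the completion $A \rtimes G = \mathrm{compl}(C_c(G,A))$ and read off the group-homomorphism property from functoriality of $\mathrm{compl}$. Finally I would verify that the resulting $\hat G$-action on $A \rtimes G$ is strongly continuous, i.e.\ that each orbit map $\chi \mapsto \chi a$ is continuous.

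For the first stage, compatibility with addition and $\mathbb C$-scaling is immediate from the pointwise formula. For convolution the key points are that $\chi(g)$ is a scalar, so it commutes past $\alpha_h$, and that $\chi(h)\chi(h^{-1}g) = \chi(g)$; together these give $\chi f * \chi f' = \chi(f*f')$ after pulling $\chi(g)$ out of the defining integral. For the involution one uses $\alpha_g\bigl(\overline{\chi(g^{-1})}\,f(g^{-1})^*\bigr) = \overline{\chi(g^{-1})}\,\alpha_g\bigl(f(g^{-1})^*\bigr)$ and the identity $\overline{\chi(g^{-1})} = \chi(g)$ (valid since $\chi$ takes values in the unit circle), giving $(\chi f)^* = \chi f^*$. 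Since $\chi^{-1} f$ is a two-sided inverse, $f \mapsto \chi f$ is an automorphism of $C_c(G,A)$ in $C^*_{\mathrm{pre}}\mathrm{Alg}^{nu}$.

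Applying the functor $\mathrm{compl}\colon C^*_{\mathrm{pre}}\mathrm{Alg}^{nu} \to C^*\mathrm{Alg}^{nu}$ then yields an automorphism $\widetilde\chi$ of $A \rtimes G$; functoriality of $\mathrm{compl}$ together with the identity $\chi_1(\chi_2 f) = (\chi_1\chi_2)f$ on $C_c(G,A)$ shows $\chi \mapsto \widetilde\chi$ is a group homomorphism $\hat G \to \mathrm{Aut}_{C^*\mathrm{Alg}^{nu}}(A \rtimes G)$, and each $\widetilde\chi$, being a $*$-automorphism of a $C^*$-algebra, is isometric. For strong continuity, since the $\widetilde\chi$ are isometries and $C_c(G,A)$ is dense in $A \rtimes G$, a standard $3\varepsilon$-argument reduces the continuity of $\chi \mapsto \widetilde\chi a$ at a point $\chi_0$ to the case $a = f \in C_c(G,A)$. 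For such $f$, with $K := \mathrm{supp}(f)$, we have $\|\widetilde\chi f - \widetilde{\chi_0} f\|_{A\rtimes G} \le \|\chi f - \chi_0 f\|_1 \le \mu(K)\,\bigl(\sup_{g\in K}\|f(g)\|\bigr)\bigl(\sup_{g\in K}|\chi(g)-\chi_0(g)|\bigr)$, using that the maximal norm is bounded by the $L^1$-norm. Because $\hat G$ carries the compact-open topology, $\sup_{g\in K}|\chi(g)-\chi_0(g)| \to 0$ as $\chi \to \chi_0$, giving the claim.

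I do not anticipate a serious obstacle: the stage-one verifications are routine character bookkeeping, and the extension to the completion is formal. The only mildly delicate point is the last stage, where one must use both that the $\widetilde\chi$ are genuine isometries (so that the $3\varepsilon$-reduction is uniform in $\chi$) and that the topology on $\hat G$ is precisely the one under which characters converge uniformly on compact sets.
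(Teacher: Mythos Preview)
Your proposal is correct and follows essentially the same approach as the paper: verify the $*$-algebra action on $C_c(G,A)$, pass to the completion via $\mathrm{compl}$, and establish strong continuity by reducing to compactly supported functions and bounding the maximal norm by the $L^1$-norm together with the compact-open topology on $\hat G$. Your write-up is in fact more explicit than the paper's, which dispatches the first two stages with ``one easily checks,'' and you are right to flag that the isometry of each $\widetilde\chi$ is what makes the density reduction uniform.
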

\begin{proof}
    By substituting into the definitions, one easily checks that we have an action of $\hat G$ on the pre-$C^{*}$-algebra $C_c(G,A)$; applying $\text{compl}$, we get an action of $\hat G$ on $A \rtimes G$. 
    
     We check that the action is continuous. We must show that for each element $f$ of $A \rtimes G$, the map $\hat{G} \to A\rtimes G$ given by $\chi \to \chi f$ is continuous. To this end, we may assume that $f \in C_c(G,A)$, as $C_c(G,A)$ is dense in $A \rtimes G$. Let $S$ be the support of $f$. By assumption, $S$ is compact, so $\mu(S)$ is finite. Suppose $\{\chi_i\}$ is a net converging to the trivial character $1$. As the maximal norm is bounded by the $L_1$-norm, it suffices to show that $||\chi_i f - f||_{L_1}$ goes to $0$. We have $$||\chi_i f - f||_{L_1} = \int_{G}||\chi_if - f||\mu \leq \int_{G}||\chi_{i}-1||||f||\mu$$ 
    To conclude, note that $||f||$ is bounded, and that by definition of the compact-open topology, we can make the difference $||\chi_i - 1||$ arbitrarily small on $S$.
    \end{proof}
    It now follows that:
\begin{prop}\label{cros}
    $-\rtimes G$ determines a functor\footnote{Throughout this paper, we will abuse notation and use $- \rtimes G$ for different functors. By default, we will mean the functor $GC^{*}\text{Alg}^{nu} \to \hat{G}C^{*}\text{Alg}^{nu}$ of Proposition \ref{cros} or the functor $KK^{G} \to KK^{\hat G}$ of Theorem \ref{twofin}, as should be clear from the context. In other cases, we will explicitly indicate the source and target.} $GC^*\textup{Alg}^{nu} \to \hat{G}C^*\textup{Alg}^{nu}$.
\end{prop}

We record the following important categorical fact.

\begin{prop}\label{colim}
    The functor $-\rtimes G\colon GC^*\textup{Alg}^{nu} \to \hat{G}C^*\textup{Alg}^{nu}$ preserves filtered colimits.
\end{prop}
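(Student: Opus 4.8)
The plan is to reduce the claim to a statement about the underlying pre-$C^*$-algebras, since the crossed product is defined by applying $\text{compl}$ to $C_c(G,-)$, and $\text{compl}$ is a left adjoint, hence preserves all colimits. So it suffices to show that the functor $A \mapsto C_c(G,A)$ from $GC^*\text{Alg}^{nu}$ to $C^*_{\text{pre}}\text{Alg}^{nu}$ (or to $\ast$-algebras over $\mathbb{C}$, equivariantly for $\hat G$) preserves filtered colimits, and then observe that the completion and the passage to $\hat G$-algebras are compatible with this. Actually, one must be slightly careful: filtered colimits in $C^*\text{Alg}^{nu}$ are not simply the algebraic colimits, so I would instead argue directly at the level of $C^*$-algebras, as follows.

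First I would recall the concrete description of filtered colimits in $GC^*\text{Alg}^{nu}$: given a filtered diagram $(A_i)_{i \in I}$ with colimit $A$, the algebraic colimit $\text{colim}_i A_i$ is a pre-$C^*$-algebra carrying a natural $G$-action, and $A = \text{compl}(\text{colim}_i A_i)$; moreover each structure map $A_i \to A$ has dense image and the union of the images of the $A_i$ is dense in $A$. Next I would show that the canonical map $\text{colim}_i (A_i \rtimes G) \to A \rtimes G$ in $\hat G C^*\text{Alg}^{nu}$ is an isomorphism. For surjectivity (density), note that $C_c(G, A)$ is dense in $A \rtimes G$, and any $f \in C_c(G,A)$ with compact support $S$ can be uniformly approximated — using a partition-of-unity/continuity argument on the compact set $S$ together with the density of $\bigcup_i A_i$ in $A$ — by an element of $C_c(G,A_i)$ for some $i$ (more precisely, by a function taking values in the image of $A_i$), and such an approximation is close in the $L^1$-norm, hence in the maximal norm, since $\mu(S) < \infty$. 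This shows the union of the images of the $A_i \rtimes G$ is dense in $A \rtimes G$. For injectivity, I would use the universal property of the maximal crossed product: a nondegenerate representation of $A \rtimes G$ corresponds to a covariant pair of a representation of $A$ and a unitary representation of $G$; since $A = \text{compl}(\text{colim}_i A_i)$, representations of $A$ are exactly compatible families of representations of the $A_i$, and so the maximal norm on $C_c(G,\text{colim}_i A_i)$ computed via $A$ agrees with the colimit of the maximal norms computed via the $A_i$. Hence $\text{colim}_i(A_i \rtimes G) \cong A \rtimes G$ as $C^*$-algebras, and one checks this isomorphism is $\hat G$-equivariant since the $\hat G$-action is given on $C_c$ by the pointwise formula $(\chi f)(g) = \chi(g) f(g)$, which is manifestly compatible with the structure maps.

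Alternatively — and this is probably the cleanest route — I would invoke that $-\rtimes G \colon GC^*\text{Alg}^{nu} \to C^*\text{Alg}^{nu}$ is a left adjoint: for a $G$-$C^*$-algebra $A$ and a $C^*$-algebra $B$, $\ast$-homomorphisms $A \rtimes G \to M(B)$ (landing nondegenerately) correspond to covariant representations of $(A,G)$ on $B$. Chasing through, one can exhibit $-\rtimes G$ as a composite of functors each of which preserves filtered colimits; combined with Proposition \ref{2.1} upgrading the target to $\hat G C^*\text{Alg}^{nu}$ and the fact that the forgetful functor $\hat G C^*\text{Alg}^{nu} \to C^*\text{Alg}^{nu}$ detects filtered colimits, this gives the result. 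I expect the main obstacle to be the careful verification that the maximal norm on the algebraic colimit $\text{colim}_i C_c(G,A_i)$ is the supremum of the pulled-back norms from the $C_c(G,A_i)$ — equivalently, that every representation of $\text{colim}_i C_c(G,A_i)$ arises compatibly from the pieces — which is where the interplay between the algebraic colimit, the completion functor $\text{compl}$, and the covariance condition has to be handled precisely.
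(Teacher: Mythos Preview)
Your surjectivity (density) argument is fine and is essentially what the paper means by ``shows by hand that the canonical map is surjective.'' The route diverges from the paper at the injectivity step. The paper does \emph{not} try to control the maximal norm on the algebraic colimit via covariant pairs; instead it first restricts to the case where the diagram $A\colon I\to GC^{*}\textup{Alg}^{nu}$ is unital, uses that a unital map $A_i\to A_j$ makes $A_i\rtimes G\to A_j\rtimes G$ essential (hence extends to multiplier algebras), and then appeals to the universal property of the crossed product in the multiplier picture (Prop.~2.34 in Williams' book) to produce an explicit inverse $(\textup{colim}_I A)\rtimes G\to \textup{colim}_I(A_i\rtimes G)$. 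The non-unital case is then reduced to the unital one by a standard unitalisation argument (as in the proof of Thm.~7.13.1 of \cite{bunke2023stable}).

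Your injectivity sketch can be made to work, but be aware of exactly where the difficulty sits: a compatible family of (nondegenerate) representations $\rho_j\colon A_j\rtimes G\to B(H)$ gives covariant pairs $(\pi_j,U_j)$, and you need the unitaries $U_j$ to agree so that they assemble to a single covariant pair for $(A,G)$. This is automatic once the structure maps $A_i\rtimes G\to A_j\rtimes G$ extend to multipliers compatibly with the canonical embeddings of $G$ --- which is precisely the ``essential homomorphism'' point the paper isolates in the unital case. So your norm argument and the paper's multiplier argument are two sides of the same coin; the paper's reduction to unital diagrams is what makes that step clean. Your ``cleanest route'' alternative, asserting that $-\rtimes G$ is a genuine left adjoint, is not correct as stated: the universal property of the maximal crossed product is about nondegenerate maps into multiplier algebras, not an adjunction on $C^{*}\textup{Alg}^{nu}$ in the ordinary sense, so that shortcut does not go through without further work.
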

\begin{proof}
As this result appears to be folklore, we only sketch the idea.

Let $I$ be a small filtered category, and $A\colon I \to GC^{*}\text{Alg}^{nu}$ a functor. We first assume that $A$ is unital - i.e., it factors through the inclusion $GC^*\textup{Alg} \hookrightarrow GC^*\textup{Alg}^{nu}$. One can show by hand that the canonical map $\text{colim}_I(A \rtimes G) \to (\text{colim}_IA) \rtimes G$ is surjective. To produce an inverse, we use the observation that if $A_i \to A_j$ is a unital homomorphism, then $A_i \rtimes G \to A_j \rtimes G$ is an essential homomorphism and thus functorially induces a map on multiplier algebras $M(A_i \rtimes G) \to M(A_j \rtimes G)$ together with \cite[Prop. 2.34]{algwill}. The extension from the unital to the non-unital case is carried out exactly as in the proof of \cite[Thm. 7.13.1]{bunke2023stable}.
\end{proof}
We now proceed to show that $- \rtimes G\colon GC^{*}\text{Alg}^{nu} \to \hat{G}C^{*}\text{Alg}^{nu}$ descends to a functor $- \rtimes G\colon KK^G \to KK^{\hat G}$, by showing that it descends through the intermediate localizations introduced in Section \ref{twoone}. Recall the $\infty$-categories $\hyperlink{hom}{GC^{*}\text{Alg}^{nu}_{h,sep}}$ and $\hyperlink{stable}{L_{K_G}GC^{*}\text{Alg}^{nu}_{h,sep}}$ introduced there.

Observe that the functor $-\rtimes G\colon GC^*\text{Alg}^{nu} \to \hat{G}C^*\text{Alg}^{nu}$ preserves separable algebras. We recall the following criterion to check for topological enrichment.
\begin{lem}\label{2.3}
    Let $F\colon GC^{*}\textup{Alg}^{nu}_{sep} \to G'C^{*}\textup{Alg}^{nu}_{sep}$ be a functor. Suppose that for all commutative algebras $B$ and for all objects $A$ of $GC^{*}\textup{Alg}^{nu}_{sep}$, there exists a natural isomorphism  $$F(A \otimes B) \cong F(A) \otimes B $$ (where $B$ is given trivial group actions). Further, assume that the composite isomorphism $$ F(A) \cong F(A \otimes \mathbb C) \cong F(A) \otimes \mathbb C \cong F(A)$$ is the identity for all objects $A$ of  $GC^{*}\textup{Alg}^{nu}_{sep}$. Then $F$ descends essentially uniquely to a functor $F\colon GC^*\textup{Alg}^{nu}_{h,sep} \to G'C^*\textup{Alg}^{nu}_{h,sep}$.
\end{lem}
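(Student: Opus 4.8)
The plan is to reduce everything to the single claim that $F$ is topologically enriched for the point-norm topologies, i.e.\ that for all separable $A,B$ the assignment $\phi\mapsto F(\phi)$ defines a continuous map $\mathrm{Hom}(A,B)\to\mathrm{Hom}(F(A),F(B))$. Once this is established, $F$ sends paths in mapping spaces to paths, hence homotopies to homotopies and homotopy equivalences to homotopy equivalences; composing $F$ with the localization functor $G'C^*\mathrm{Alg}^{nu}_{sep}\to G'C^*\mathrm{Alg}^{nu}_{h,sep}$ then produces a functor inverting homotopy equivalences, which by the universal property of the Dwyer-Kan localization cited above factors essentially uniquely through $GC^*\mathrm{Alg}^{nu}_{h,sep}$. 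Since $A$ is separable, the point-norm topology on $\mathrm{Hom}(A,B)$ is metrizable --- a metric is given by $\sum_n 2^{-n}\min(1,\|\phi(a_n)-\psi(a_n)\|)$ for a dense sequence $(a_n)$ in $A$, using that $*$-homomorphisms are contractive --- so it suffices to verify sequential continuity.

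The mechanism I would use is the standard dictionary between point-norm continuous families of homomorphisms and homomorphisms into function algebras. For a compact metrizable space $X$, a point-norm continuous family $X\to\mathrm{Hom}(A,B)$ is the same datum as a single $*$-homomorphism $A\to C(X,B)$, and $C(X,B)\cong C(X)\otimes B$ since $C(X)$ is commutative, hence nuclear, so the maximal tensor product used throughout agrees with the spatial one (here $C(X)\otimes B$ carries the $G$-action that is trivial on $C(X)$). So, given a convergent sequence $\phi_n\to\phi_0$ in $\mathrm{Hom}(A,B)$, I would bundle it into a homomorphism $\Phi\colon A\to C(X,B)$ with $X=\{0\}\cup\{1/n:n\ge 1\}\subset[0,1]$, apply $F$, and use the hypothesis to rewrite $F(\Phi)$ as
\[
F(A)\xrightarrow{\ F(\Phi)\ } F\bigl(B\otimes C(X)\bigr)\xrightarrow{\ \cong\ } F(B)\otimes C(X)\cong C\bigl(X,F(B)\bigr),
\]
which corresponds to a point-norm continuous family $X\to\mathrm{Hom}(F(A),F(B))$.

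The remaining point, and the only non-formal one, is to check that the value of this family at a point $x\in X$ is precisely $F(\phi_x)$; this is exactly where the two hypotheses on the natural isomorphism $F(A\otimes B)\cong F(A)\otimes B$ are used. Naturality in the commutative tensor variable, applied to the evaluation character $\mathrm{ev}_x\colon C(X)\to\mathbb{C}$, commutes $\mathrm{ev}_x$ past $F$ and reduces the endpoint value to $F$ of $(\mathrm{id}_B\otimes\mathrm{ev}_x)\circ\Phi=\phi_x$, composed with the comparison isomorphism $F(B\otimes\mathbb{C})\cong F(B)\otimes\mathbb{C}$; the normalization hypothesis --- that the composite $F(A)\cong F(A\otimes\mathbb{C})\cong F(A)\otimes\mathbb{C}\cong F(A)$ is the identity --- then pins this value down to $F(\phi_x)$ itself rather than to a twist of it by an automorphism of $F(B)$. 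With that identification in hand, continuity of $x\mapsto F(\Phi)(b)(x)$ for each $b\in F(A)$ forces $F(\phi_{1/n})(b)\to F(\phi_0)(b)$ for all $b$, i.e.\ $F(\phi_n)\to F(\phi_0)$ in the point-norm topology, which is the desired sequential continuity. I expect this bookkeeping --- threading all the canonical identifications so that the endpoint genuinely comes out as $F(\phi_x)$, for which the normalization hypothesis is indispensable --- to be the main obstacle; the tensor-product identifications and the descent to the localization are routine.
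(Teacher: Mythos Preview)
Your proposal is correct and follows essentially the same route as the paper: both use the adjunction $\mathrm{Hom}_{\mathbf{Top}}(X,\mathrm{Hom}_G(A,B))\cong\mathrm{Hom}_G(A,B\otimes C(X))$ for compact $X$, push through $F$ via the assumed isomorphism $F(B\otimes C(X))\cong F(B)\otimes C(X)$, and invoke the normalization hypothesis to identify the resulting map of sets with the one induced by $F$. The only cosmetic difference is that the paper runs this for all compact $X$ at once and concludes via compact generation of the point-norm topology on mapping sets between separable algebras, whereas you specialize to $X=\{0\}\cup\{1/n\}$ and conclude via metrizability and sequential continuity; the mechanism and the role of the two hypotheses are identical.
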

\begin{proof}
    Write $\text{Hom}_{G}(-,-)$ and $\text{Hom}_{G'}(-,-)$ for the mapping spaces in $GC^{*}\textup{Alg}^{nu}_{sep}$ and $G'C^{*}\textup{Alg}^{nu}_{sep}$, respectively.

    For objects $A, B$ of $GC^{*}\textup{Alg}^{nu}_{sep}$, and a compact topological space $X$, consider the composite map \begin{align*}
    \text{Hom}_{\textbf{Top}}(X,\text{Hom}_G(A,B)) &\cong \text{Hom}_{G}(A, B \otimes C(X))\\
     & \to \text{Hom}_{G}(F(A), F(B \otimes C(X)))\\
     & \cong \text{Hom}_{G'}(F(A), F(B) \otimes C(X))\\
     & \cong \text{Hom}_{\textbf{Top}}(X,\text{Hom}_{G'}(F(A),F(B)))
     \end{align*}
     The condition on the isomorphism in the statement of the lemma implies that the underlying map of sets $\text{Hom}_{\textbf{Top}}(X,\text{Hom}_G(A,B)) \to \text{Hom}_{\textbf{Top}}(X,\text{Hom}_{G'}(F(A),F(B)))$ constructed above is induced by $F$. This shows continuity with respect to the compactly generated topology, which suffices to conclude the desired statement.
\end{proof}
\begin{lem}\label{2.5}
Let $A$ be an object of $GC^{*}\textup{Alg}^{nu}$ and $B$ an object of $C^{*}\textup{Alg}^{nu}$. There is a natural isomorphism of algebras $$\phi_{A,B}\colon(A \rtimes G) \otimes B \simeq (A \otimes B) \rtimes G$$ given by $f \otimes b \mapsto (g \mapsto f(g) \otimes b)$ on the dense subset $C_c(G,A) \otimes B$. 
\end{lem}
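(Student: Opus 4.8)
The plan is to write down $\phi_{A,B}$ by the indicated formula on the algebraic tensor product $C_c(G,A)\odot B$, to check that it is a homomorphism of $*$-algebras into $(A\otimes B)\rtimes G$, and then to see that it extends to an isomorphism of the maximal completions by exhibiting both sides as the $C^{*}$-completion of one and the same $*$-algebra. Concretely, $f\otimes b\mapsto(g\mapsto f(g)\otimes b)$ defines a $\mathbb C$-linear map $\phi^{0}_{A,B}\colon C_c(G,A)\odot B\to C_c(G,A\otimes B)$: the image of $\sum_i f_i\otimes b_i$ is the continuous, compactly supported function $g\mapsto\sum_i f_i(g)\otimes b_i$, with support inside $\bigcup_i\mathrm{supp}(f_i)$. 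One checks by substitution into the definitions that $\phi^{0}_{A,B}$ respects the convolution product and the involution; this is purely formal, the one thing to keep track of being that the $G$-action on $A\otimes B$ is $\alpha\otimes\mathrm{id}_B$ (because $B$ carries the trivial action), so that for instance $\phi^{0}_{A,B}((f\otimes b)\ast(f'\otimes b'))$ is the function $g\mapsto(\int_G f(h)\,\alpha_h(f'(h^{-1}g))\,d\mu(h))\otimes bb'$, which is exactly the convolution of $g\mapsto f(g)\otimes b$ and $g\mapsto f'(g)\otimes b'$ computed in $C_c(G,A\otimes B)$, and similarly $f^{*}(g)=\alpha_g(f(g^{-1})^{*})$ matches the two involutions. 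As $\phi^{0}_{A,B}$ is moreover injective, it identifies $C_c(G,A)\odot B$ with a $*$-subalgebra $R$ of $C_c(G,A\otimes B)$.

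Now recall that $(A\otimes B)\rtimes G$ is by definition the completion of $C_c(G,A\otimes B)$ in its maximal norm, while $(A\rtimes G)\otimes B$ is the completion of $(A\rtimes G)\odot B$ in its maximal norm. I would show that each of these is equally the maximal completion of the common $*$-algebra $C_c(G,A)\odot B\cong R$, so that they are canonically isomorphic and $\phi^{0}_{A,B}$ extends to that isomorphism. This rests on two standard facts. First, $C_c(G,A)\odot B$ is dense in $(A\rtimes G)\odot B$ in the maximal norm — since $C_c(G,A)$ is dense in $A\rtimes G$ and the maximal $C^{*}$-norm is a cross norm — and $R$ is dense in $(A\otimes B)\rtimes G$, because it already contains all functions $g\mapsto\varphi(g)(a\otimes b)$ with $\varphi\in C_c(G)$, which are dense in the $L^{1}$-norm, and the $L^{1}$-norm dominates the maximal norm. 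Second, the maximal norm of $C_c(G,A)\odot B\cong R$ coincides with the norm it inherits from either of the larger algebras, because every representation of $C_c(G,A)\odot B$ extends to one of the larger algebra: such a representation is a pair of commuting representations of $C_c(G,A)$ and of $B$; the former extends to a covariant pair $(\pi,u)$ for $(A,G,\alpha)$, the representation $\sigma$ of $B$ still commutes with $\pi(A)$ and also with the canonical unitaries $u(G)$ (by nondegeneracy: an operator commuting with a nondegenerately acting subalgebra commutes with its idealizer in $B(H)$, and each $u_g$ lies there), so $\pi$ and $\sigma$ assemble into a representation of $A\otimes_{\mathrm{max}}B$ which is covariant for $\alpha\otimes\mathrm{id}$, hence into a representation of $C_c(G,A\otimes B)$ restricting to the original one on both $(A\rtimes G)\odot B$ and $R$. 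Naturality of $\phi_{A,B}$ in $A\in GC^{*}\textup{Alg}^{nu}$ and in $B\in C^{*}\textup{Alg}^{nu}$ is immediate from the formula on the dense subalgebras, as is the (unasserted) fact that $\phi_{A,B}$ is $\hat G$-equivariant, since $\phi^{0}_{A,B}((\chi f)\otimes b)=\chi\cdot\phi^{0}_{A,B}(f\otimes b)$.

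I expect the only genuine work to be the representation-theoretic bookkeeping just sketched — matching representations of the two completions through covariant pairs, and in the non-unital setting handling nondegeneracy and multiplier algebras so that the commuting representation of $B$ is seen to be compatible with the crossed-product structure on $A\otimes B$. Everything else is formal manipulation together with routine density arguments, so this should be the main, and still fairly mild, obstacle.
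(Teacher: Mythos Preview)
Your argument is correct and is essentially the standard one. The paper does not give its own proof of this lemma; it simply cites \cite[Lem.~2.75]{algwill}, so there is nothing to compare against beyond noting that your sketch reproduces the classical representation-theoretic proof found in that reference: identify nondegenerate representations of both completions with triples $(\pi,u,\sigma)$ consisting of a covariant pair $(\pi,u)$ for $(A,G,\alpha)$ and a representation $\sigma$ of $B$ commuting with $\pi(A)$ and $u(G)$, and conclude that the two maximal norms on the common dense $*$-subalgebra agree. The only point where you should be slightly more careful is the splitting step: rather than splitting an arbitrary representation of the pre-$C^{*}$-algebra $C_c(G,A)\odot B$, it is cleaner to start from a nondegenerate representation of the $C^{*}$-algebra $(A\rtimes G)\otimes_{\max} B$ (respectively $(A\otimes B)\rtimes G$) and use the standard fact that such representations of a maximal tensor product biject with pairs of commuting nondegenerate representations of the factors; your multiplier/nondegeneracy remark in the final paragraph already anticipates this.
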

\begin{proof}
    See \cite[Lem. 2.75]{algwill}.
\end{proof}
\begin{prop}\label{omo}
    The functor $-\rtimes G\colon GC^*\textup{Alg}^{nu}_{sep} \to \hat{G}C^*\textup{Alg}^{nu}_{sep}$ descends essentially uniquely to a functor $-\rtimes G\colon GC^*\textup{Alg}^{nu}_{h,sep} \to \hat{G}C^*\textup{Alg}^{nu}_{h,sep}$.
\end{prop}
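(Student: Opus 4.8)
The strategy is to apply the enrichment criterion of Lemma \ref{2.3} with $F = -\rtimes G$ and $G' = \hat G$. We have already observed that $-\rtimes G$ preserves separability, so it does define a functor $GC^*\textup{Alg}^{nu}_{sep} \to \hat G C^*\textup{Alg}^{nu}_{sep}$; what remains is to verify the two hypotheses of that lemma, namely the existence of a natural isomorphism $F(A\otimes B)\cong F(A)\otimes B$ for $B$ commutative with trivial actions, and the normalization condition that the induced composite through $\mathbb C$ be the identity.

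For the first hypothesis I would take the natural isomorphism $\phi_{A,B}:(A\rtimes G)\otimes B \xrightarrow{\sim} (A\otimes B)\rtimes G$ of Lemma \ref{2.5}, specialized to $B$ commutative and equipped with trivial $G$- and $\hat G$-actions. Lemma \ref{2.5} only records an isomorphism of underlying $C^*$-algebras, so the one point genuinely requiring attention is that $\phi_{A,B}$ is $\hat G$-equivariant for the dual actions on both sides. This is read off from the defining formula $f\otimes b \mapsto (g\mapsto f(g)\otimes b)$ on the dense subalgebra $C_c(G,A)\otimes B$: a character $\chi\in\hat G$ sends $f\otimes b$ to $(\chi f)\otimes b$ on the left and $(g\mapsto f(g)\otimes b)$ to $(g\mapsto \chi(g)(f(g)\otimes b))$ on the right, and these correspond under $\phi_{A,B}$; since both composites $\chi\circ\phi_{A,B}$ and $\phi_{A,B}\circ\chi$ are $*$-homomorphisms agreeing on a dense subalgebra, they agree everywhere. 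Naturality in $A$ is already part of the statement of Lemma \ref{2.5}.

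For the second hypothesis, taking $B=\mathbb C$ one checks that $\phi_{A,\mathbb C}$ sends $f\otimes 1$ to $(g\mapsto f(g)\otimes 1)$, which under the canonical identifications $A\otimes\mathbb C\cong A$ and $(A\rtimes G)\otimes\mathbb C\cong A\rtimes G$ is the identity map of $C_c(G,A)$, hence of $A\rtimes G$ by density. Thus the composite $F(A)\cong F(A\otimes\mathbb C)\cong F(A)\otimes\mathbb C\cong F(A)$ is the identity, as required. With both hypotheses in hand, Lemma \ref{2.3} produces the essentially unique descended functor $-\rtimes G: GC^*\textup{Alg}^{nu}_{h,sep} \to \hat G C^*\textup{Alg}^{nu}_{h,sep}$.

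I do not expect a serious obstacle: the proof is essentially a bookkeeping exercise in applying Lemma \ref{2.3}. The only step needing a moment's care is upgrading the algebra isomorphism of Lemma \ref{2.5} to a $\hat G$-equivariant one, which is handled by the routine density-and-continuity argument sketched above.
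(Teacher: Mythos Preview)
Your proof is correct and follows essentially the same route as the paper: apply Lemma \ref{2.3} using the isomorphism $\phi_{A,B}$ of Lemma \ref{2.5}, note that it is $\hat G$-equivariant, and verify that the composite through $\mathbb C$ is the identity. You have simply spelled out the $\hat G$-equivariance and normalization checks that the paper asserts without detail.
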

\begin{proof}
    We check the condition of Lemma \ref{2.3}. Indeed, the isomorphism $\phi_{A,B}$ of Lemma \ref{2.5} gives us the desired isomorphism: it is $\hat G$-equivariant, and the composition $$A \rtimes G \cong (A \rtimes G) \otimes \mathbb C\xrightarrow{\phi_{A,\mathbb C}} (A \otimes \mathbb C) \rtimes G \cong A \rtimes G$$ is the identity.
\end{proof}
\begin{lem}\label {2.6}
    Let $B$ be a $C^*$-algebra, and let $\rho\colon G \to U(M(B))$ be a strongly continuous group homomorphism. Then $B$ becomes a $G$-$C^{*}$-algebra with action $\beta_g(b) \coloneq \rho_g b \rho_{g^{-1}}$. Denote by $\underline B$ the algebra $B$ with trivial $G$-action. For any $G$-$C^{*}$-algebra $(A,\alpha)$, there is a natural (in $A$) algebra isomorphism $$\psi_{A,B}\colon(A \otimes B) \rtimes G \to (A \otimes \underline B) \rtimes G$$ given by $f(g) \mapsto f(g)(\textup{id}\otimes \rho_g)$ on the dense subset $C_c(G,A \otimes B)$. 
\end{lem}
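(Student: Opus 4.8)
The plan is to show that the formula of the statement defines a $*$-isomorphism between the dense pre-$C^{*}$-subalgebras $C_{c}(G,A\otimes B)\subset(A\otimes B)\rtimes G$ and $C_{c}(G,A\otimes\underline{B})\subset(A\otimes\underline{B})\rtimes G$, and then to extend it to the completions. Observe first that $A\otimes B$ and $A\otimes\underline{B}$ have the \emph{same} underlying $C^{*}$-algebra; only the $G$-action differs, being $\gamma:=\alpha\otimes\beta$ in the former and $\gamma':=\alpha\otimes\mathrm{id}$ in the latter. Hence $C_{c}(G,A\otimes B)$ and $C_{c}(G,A\otimes\underline{B})$ are the same set of functions, equipped with the two convolution products $*_{\gamma},*_{\gamma'}$ and the involutions built from $\gamma,\gamma'$. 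Throughout, $\mathrm{id}\otimes\rho_{g}$ denotes the unitary multiplier of $A\otimes B$ induced by $\rho_{g}\in U(M(B))$ on the second leg. The computational engine is the identity $\gamma_{h}(x)=(\mathrm{id}\otimes\rho_{h})\,\gamma'_{h}(x)\,(\mathrm{id}\otimes\rho_{h^{-1}})$ for $h\in G$ and $x\in A\otimes B$, which one checks on elementary tensors using $\beta_{h}(b)=\rho_{h}b\rho_{h^{-1}}$ and extends by linearity and continuity, together with $\rho_{h}\rho_{h^{-1}g}=\rho_{g}$ and $\rho_{g^{-1}}=\rho_{g}^{*}$.

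First I would verify that $\psi_{A,B}$ is well defined on compactly supported functions: the function $g\mapsto f(g)(\mathrm{id}\otimes\rho_{g})$ has the same (compact) support as $f$, and it is continuous because $\lVert f(g)(\mathrm{id}\otimes\rho_{g})-f(g_{0})(\mathrm{id}\otimes\rho_{g_{0}})\rVert\le\lVert f(g)-f(g_{0})\rVert+\lVert f(g_{0})\bigl((\mathrm{id}\otimes\rho_{g})-(\mathrm{id}\otimes\rho_{g_{0}})\bigr)\rVert$, where the first term tends to $0$ as $g\to g_{0}$ by continuity of $f$ and the second because $g\mapsto x(\mathrm{id}\otimes\rho_{g})$ is norm-continuous for each fixed $x\in A\otimes B$ — this is clear for $x$ in the algebraic tensor product by strong continuity of $\rho$, and passes to all $x$ since $\lVert\mathrm{id}\otimes\rho_{g}\rVert=1$.

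Next I would check that $\psi_{A,B}\colon(C_{c}(G,A\otimes B),*_{\gamma})\to(C_{c}(G,A\otimes B),*_{\gamma'})$ is a $*$-homomorphism. For multiplicativity, expand $(\psi(f)*_{\gamma'}\psi(f'))(g)=\int_{G}f(h)(\mathrm{id}\otimes\rho_{h})\,\gamma'_{h}\bigl(f'(h^{-1}g)(\mathrm{id}\otimes\rho_{h^{-1}g})\bigr)\,\mu(h)$; since $\gamma'_{h}$ is trivial on the $B$-leg it fixes $\mathrm{id}\otimes\rho_{h^{-1}g}$, and rewriting $\gamma'_{h}(y)=(\mathrm{id}\otimes\rho_{h^{-1}})\gamma_{h}(y)(\mathrm{id}\otimes\rho_{h})$ and telescoping $\rho_{h}\rho_{h^{-1}}=1$, $\rho_{h}\rho_{h^{-1}g}=\rho_{g}$ collapses the integrand to $f(h)\gamma_{h}(f'(h^{-1}g))(\mathrm{id}\otimes\rho_{g})$; pulling the constant-in-$h$ multiplier $\mathrm{id}\otimes\rho_{g}$ out of the integral identifies the result with $(f*_{\gamma}f')(g)(\mathrm{id}\otimes\rho_{g})=\psi(f*_{\gamma}f')(g)$. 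The $*$-identity is verified the same way, using $\rho_{g^{-1}}=\rho_{g}^{*}$. Bijectivity is immediate: $g\mapsto f(g)(\mathrm{id}\otimes\rho_{g^{-1}})$ is a two-sided inverse on $C_{c}(G,A\otimes B)$.

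Finally, since $\psi_{A,B}$ and its inverse are $*$-homomorphisms between pre-$C^{*}$-algebras they are contractive for the respective maximal norms (each representation of the target pulls back along the map to a representation of the source), so $\psi_{A,B}$ is isometric and extends uniquely to the completions — equivalently, apply $\text{compl}$ — giving a $C^{*}$-algebra isomorphism $(A\otimes B)\rtimes G\cong(A\otimes\underline{B})\rtimes G$. Naturality in $A$ is clear from the formula, since a $G$-equivariant homomorphism $A\to A'$ only affects the $A$-leg and commutes with right multiplication by $\mathrm{id}\otimes\rho_{g}$. I expect the only delicate points to be the continuity check in the second step and making precise the sense in which $\mathrm{id}\otimes\rho_{g}$ is a multiplier of $A\otimes B$; everything else is bookkeeping organised by the identity $\gamma_{h}(x)=(\mathrm{id}\otimes\rho_{h})\gamma'_{h}(x)(\mathrm{id}\otimes\rho_{h^{-1}})$.
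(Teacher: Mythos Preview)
Your proposal is correct and follows essentially the same approach as the paper: define the map on $C_c(G,A\otimes B)$, verify it is a $*$-algebra isomorphism with inverse $f(k)\mapsto f(k)(\mathrm{id}\otimes\rho_{k^{-1}})$, and then apply $\mathrm{compl}$ to pass to the crossed products. The paper is terser and spells out up front the one point you flag as delicate, namely that $\mathrm{id}\otimes\rho_g$ is a multiplier of $A\otimes B$ via the canonical map $M(A)\otimes M(B)\to M(A\otimes B)$ coming from the fact that $A\otimes B$ is an ideal in $M(A)\otimes M(B)$.
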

\begin{proof}
    Note that $A \otimes B$ is an ideal in $M(A) \otimes M(B)$, so by the universal property of the multiplier algebra, we obtain a unital map $M(A)\otimes M(B) \to M(A \otimes B)$. We can therefore define $\textup{id} \otimes \rho_g$ as the composite 
\[\begin{tikzcd}
	G && {U(M(A)) \times U(M(B))} && {U(M(A) \otimes M(B))} & {U(M(A \otimes B))}
	\arrow["{(\text{id},\rho_g)}", from=1-1, to=1-3]
	\arrow["{(a,b) \mapsto a \otimes b}", from=1-3, to=1-5]
	\arrow[from=1-5, to=1-6]
\end{tikzcd}\]
    We check that the formula $f(g) \mapsto f(g)(\textup{id}\otimes \rho_g)$ defines a $*$-algebra isomomorphism $$\psi'_{A,B}\colon C_c(G,A \otimes B) \to C_c(G,A \otimes \underline{B})$$
    with inverse $\psi'^{-1}_{A,B}$ given by $f(k) \mapsto f(k)(\text{id} \otimes \rho_{k^{-1}})$.
    
    We now set $\psi_{A,B} \coloneq \text{compl}(\psi'_{A,B})$.
\end{proof}
\begin{prop}\label{stabledesc}
    The functor $-\rtimes G\colon GC^*\textup{Alg}^{nu}_{h,sep} \to \hat{G}C^*\textup{Alg}^{nu}_{h,sep}$ descends essentially uniquely to a functor $-\rtimes G\colon L_{K_G}GC^*\textup{Alg}^{nu}_{h,sep} \to L_{K_{\hat G}}\hat{G}C^*\textup{Alg}^{nu}_{h,sep}$.
\end{prop}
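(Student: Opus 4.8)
The plan is to reduce the statement to $G$-stability via Proposition~\ref{eqstab} and the universal property of Dwyer--Kan localization. By that universal property, the functor $-\rtimes G$ of Proposition~\ref{omo}, postcomposed with $L_{K_{\hat G}}$, descends essentially uniquely through $L_{K_G}\colon GC^*\textup{Alg}^{nu}_{h,sep}\to L_{K_G}GC^*\textup{Alg}^{nu}_{h,sep}$ if and only if the composite
\[
GC^*\textup{Alg}^{nu}_{h,sep}\xrightarrow{\;-\rtimes G\;}\hat GC^*\textup{Alg}^{nu}_{h,sep}\xrightarrow{\;L_{K_{\hat G}}\;}L_{K_{\hat G}}\hat GC^*\textup{Alg}^{nu}_{h,sep}
\]
inverts (the images of) the $K_G$-equivalences. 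Precomposing with $L_h\colon GC^*\textup{Alg}^{nu}_{sep}\to GC^*\textup{Alg}^{nu}_{h,sep}$ yields a homotopy invariant functor $F$ on $GC^*\textup{Alg}^{nu}_{sep}$, and a $K_G$-equivalence in $GC^*\textup{Alg}^{nu}_{sep}$ is sent to an equivalence by $F$ exactly when its image in $GC^*\textup{Alg}^{nu}_{h,sep}$ is inverted by $L_{K_{\hat G}}\circ(-\rtimes G)$; so by Proposition~\ref{eqstab} it suffices to show that $F$ is $G$-stable.

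For $G$-stability I would fix an equivariant isometric embedding $V'\hookrightarrow V$ of nonzero separable unitary $G$-Hilbert spaces and a separable $G$-$C^*$-algebra $A$, and prove that $F(A\otimes K(V'))\to F(A\otimes K(V))$ is an equivalence. The key computation is that crossing with $G$ turns the conjugation $G$-action on $K(V)$ into a \emph{trivial} $\hat G$-action: the unitary $G$-action on $V$ provides a strongly continuous $\rho\colon G\to U(M(K(V)))=U(B(V))$ implementing the action by $\mathrm{Ad}$, so Lemma~\ref{2.6} identifies $(A\otimes K(V))\rtimes G$ with $(A\otimes\underline{K(V)})\rtimes G$, and then Lemma~\ref{2.5} identifies the latter with $(A\rtimes G)\otimes K(V)$ where $K(V)$ now carries the trivial $\hat G$-action. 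Running the same identifications for $V'$ and using that $V'$ is a $G$-subrepresentation of $V$ — so that $\rho$ restricts to the corresponding unitary for $V'$ and the corner inclusion $K(V')\hookrightarrow K(V)$ intertwines the two conjugation actions — I would check on the dense subalgebras $C_c(G,-)$, using the explicit formulas $f(g)\mapsto f(g)(\mathrm{id}\otimes\rho_g)$ and $f\otimes b\mapsto(g\mapsto f(g)\otimes b)$, that these isomorphisms fit into a commutative square. This identifies $F(A\otimes K(V'))\to F(A\otimes K(V))$ with $L_{K_{\hat G}}((A\rtimes G)\otimes K(V'))\to L_{K_{\hat G}}((A\rtimes G)\otimes K(V))$ for the trivial $\hat G$-actions.

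Finally, the functor $L_{K_{\hat G}}\circ L_h\colon\hat GC^*\textup{Alg}^{nu}_{sep}\to L_{K_{\hat G}}\hat GC^*\textup{Alg}^{nu}_{h,sep}$ is homotopy invariant and inverts $K_{\hat G}$-equivalences by the very definition of its target, hence is $\hat G$-stable by Proposition~\ref{eqstab}; applying $\hat G$-stability to the embedding $V'\hookrightarrow V$ regarded with trivial $\hat G$-actions yields exactly the equivalence needed, so $F$ is $G$-stable and we are done.

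I expect the main obstacle to be purely bookkeeping: verifying that the isomorphisms from Lemmas~\ref{2.5} and~\ref{2.6} are natural not only in the algebra variable but also compatibly with the corner inclusion $K(V')\hookrightarrow K(V)$, so that the square identifying the two maps genuinely commutes. The conceptual content — that $-\rtimes G$ converts the conjugation $G$-action on compact operators into a trivial $\hat G$-action on the crossed product, which is precisely what makes $G$-stability transport across $-\rtimes G$ — is already packaged into those two lemmas, so no new ideas beyond this compatibility check should be required.
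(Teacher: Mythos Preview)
Your proposal is correct and follows essentially the same approach as the paper: reduce to $G$-stability of the composite $F$ via Proposition~\ref{eqstab}, then use the isomorphisms of Lemmas~\ref{2.5} and~\ref{2.6} to build a commutative square identifying $(A\otimes K(V'))\rtimes G\to (A\otimes K(V))\rtimes G$ with $(A\rtimes G)\otimes\underline{K(V')}\to (A\rtimes G)\otimes\underline{K(V)}$ (with $\hat G$-action $\hat\alpha\otimes\mathrm{id}$), and conclude by $\hat G$-stability of $L_{K_{\hat G}}\circ L_h$. The paper records exactly the same diagram and the same bookkeeping point about upgrading it to a diagram of $\hat G$-algebras that you flag as the main obstacle.
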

\begin{proof}
We will show that the composition $$F\colon GC^*\text{Alg}^{nu} _{sep}\xrightarrow{- \rtimes G} \hat{G}C^*\text{Alg}^{nu}_{sep} \xrightarrow{L_{K_{\hat G},h}} L_{K_{\hat G}}\hat{G}C^*\text{Alg}^{nu}_{h,sep}$$ is $G$-stable. Since $F$ is homotopy invariant by Lemma \ref{omo}, this would imply the assertion by the universal property of $L_{K_G}GC^*\textup{Alg}^{nu}_{h,sep}$.

    Let $V'$ and $V$ be $G$-Hilbert spaces together with an equivariant unitary embedding $V' \hookrightarrow V$.  We have to show that for any $A \in GC^*\text{Alg}^{nu}_{sep}$, the induced map $A \otimes K(V') \to A \otimes K(V)$ is sent to an equivalence by $F$.
    
    Using Lemma \ref{2.5} and Lemma \ref{2.6}, we have the following commutative diagram of algebras, with the horizontal arrows being natural isomorphisms:
\[\begin{tikzcd}
	{(A \otimes K(V'))\rtimes G} && {(A \otimes \underline{K(V')})\rtimes G} && {(A\rtimes G)\otimes \underline{K(V')}} \\
	\\
	{(A \otimes K(V)) \rtimes G} && {(A \otimes \underline{K(V)})\rtimes G} && {(A\rtimes G)\otimes \underline{K(V)}}
	\arrow[from=1-1, to=3-1]
	\arrow["{\psi_{A,K(V')}}", from=1-1, to=1-3]
	\arrow["{\psi_{A,K(V)}}", from=3-1, to=3-3]
	\arrow[from=1-3, to=3-3]
	\arrow["{\phi_{A,K(V')}}"', from=1-5, to=1-3]
	\arrow["{\phi_{A,K(V)}}"', from=3-5, to=3-3]
	\arrow[from=1-5, to=3-5]
\end{tikzcd}\]
We can upgrade this to a commutative diagram of $\hat G$-algebras by introducing the appropriate $\hat G$ actions on the algebras in the middle and right columns. It is easy to see that this will be the usual crossed product action on the middle column, and $\hat \alpha \otimes \text{id}$ on the right column, where $\hat \alpha$ is the usual action on $A \rtimes G$.

The right vertical map is induced by the identity on $A \rtimes G$ and the embedding $V' \hookrightarrow V$. Therefore, it becomes an equivalence on passage to $L_{K_{\hat G}} \hat{G}C^*\text{Alg}^{nu}_{h,sep}$ (Proposition \ref{eqstab}). Hence, so does the left vertical map, and this is what we wanted to show.
\end{proof}
 
\begin{prop}
    The functor $-\rtimes G\colon  L_{K_G}GC^*\textup{Alg}^{nu}_{sep,h} \to L_{K_{\hat G}}\hat{G}C^*\textup{Alg}^{nu}_{sep,h}$ descends essentially uniquely to an exact functor $-\rtimes G\colon KK^{G}_{sep} \to KK^{\hat G}_{sep}$.
\end{prop}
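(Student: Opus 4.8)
The plan is to deduce the descent from the universal property of $KK^{G}_{sep}$ recorded above, the previous two propositions having already dealt with the intermediate localizations. Since $-\rtimes G$ preserves separable algebras, the composite $E := kk^{\hat G}_{sep}\circ(-\rtimes G)\colon GC^{*}\textup{Alg}^{nu}_{sep}\to KK^{\hat G}_{sep}$ is defined and reduced (as $0\rtimes G = 0$), and it suffices to check that $E$ is homotopy invariant, $G$-stable and semi-exact; the universal property then produces an essentially unique exact functor $\overline{E}\colon KK^{G}_{sep}\to KK^{\hat G}_{sep}$ with $\overline{E}\circ kk^{G}_{sep}\simeq E$. Homotopy invariance of $E$ follows from Proposition \ref{omo} (so that $-\rtimes G$ carries homotopy equivalences to homotopy equivalences) together with homotopy invariance of $kk^{\hat G}_{sep}$. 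For $G$-stability, Proposition \ref{stabledesc} shows that $L_{K_{\hat G},h}\circ(-\rtimes G)$ inverts $K_G$-equivalences and is $G$-stable; hence so is the further composite $E$, and being in addition homotopy invariant it is $G$-stable by Proposition \ref{eqstab}.

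The substantive point is semi-exactness. Let $\mathcal{E}\colon 0\to A\to B\xrightarrow{f}C\to 0$ be a semi-split exact sequence in $GC^{*}\textup{Alg}^{nu}_{sep}$ with $G$-equivariant completely positive contractive section $s\colon C\to B$ of $f$. I claim that applying $-\rtimes G$ yields another semi-split exact sequence $0\to A\rtimes G\to B\rtimes G\xrightarrow{f\rtimes G}C\rtimes G\to 0$, now in $\hat{G}C^{*}\textup{Alg}^{nu}_{sep}$; feeding this into the semi-exact functor $kk^{\hat G}_{sep}$ then shows that $E(\mathcal{E})$ is a fibre sequence, which is what we want. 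Exactness of the new sequence is the classical fact that the maximal crossed product preserves short exact sequences of $C^{*}$-algebras (a standard property of the full crossed product; see \cite{algwill}). For the semi-splitting, the equivariant completely positive contractive map $s$ induces a completely positive contractive map $s\rtimes G\colon C\rtimes G\to B\rtimes G$, which on the dense subalgebra is given by $\eta\mapsto(g\mapsto s(\eta(g)))$; its contractivity is visible from the $L_1$-norm bound on $C_c(G,-)$ used in Section \ref{twotwo}, and its complete positivity is the standard fact that a $G$-equivariant completely positive map descends to the maximal crossed products. Since $s$ is a set-theoretic section of $f$, the map $s\rtimes G$ is a section of $f\rtimes G$, which proves the claim.

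Finally, the functor $\overline{E}$ is the asserted descent of $-\rtimes G\colon L_{K_G}GC^{*}\textup{Alg}^{nu}_{h,sep}\to L_{K_{\hat G}}\hat{G}C^{*}\textup{Alg}^{nu}_{h,sep}$: writing $q_G := \Omega^2 L_W\colon L_{K_G}GC^{*}\textup{Alg}^{nu}_{h,sep}\to KK^{G}_{sep}$ and likewise $q_{\hat G}$, both $\overline{E}\circ q_G$ and $q_{\hat G}\circ(-\rtimes G)$ restrict along the localization $L_{h,K_G}\colon GC^{*}\textup{Alg}^{nu}_{sep}\to L_{K_G}GC^{*}\textup{Alg}^{nu}_{h,sep}$ to $E$, and since $L_{h,K_G}$ is a localization the two functors agree; essential uniqueness is inherited from that of $\overline{E}$. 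The one place I expect genuine work is the semi-exactness step, and there specifically the passage of a $G$-equivariant completely positive contractive section to the crossed products: exactness of the maximal crossed product is classical, but the functoriality of $-\rtimes G$ on completely positive (non-multiplicative) maps warrants some care.
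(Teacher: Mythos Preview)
Your approach is essentially identical to the paper's: both verify that $kk^{\hat G}_{sep}\circ(-\rtimes G)$ is homotopy invariant, $G$-stable and semi-exact, then invoke the universal property of $KK^{G}_{sep}$, with the substantive step being that the maximal crossed product preserves semi-split exact sequences. Two small points of comparison: the paper simply cites Meyer for the fact that a $G$-equivariant cpc section induces a cpc section on crossed products (your $L_1$-bound argument only controls the $L_1$-norm, not the maximal $C^*$-norm, so it does not by itself establish contractivity of $s\rtimes G$), and the paper then explicitly verifies that the induced section is $\hat G$-equivariant, which you left implicit in asserting the sequence is semi-split \emph{in} $\hat G C^*\textup{Alg}^{nu}_{sep}$; this is immediate from the formula $(s\rtimes G)(\eta)(g)=s(\eta(g))$ and the $\mathbb C$-linearity of $s$.
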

\begin{proof}
    First, we show that $-\rtimes G\colon GC^*\text{Alg}^{nu}_{sep} \to \hat{G}C^*\text{Alg}^{nu}_{sep}$ preserves semi-exact sequences.
    
    So, suppose that we are given a semi-exact sequence of $G$-$C^{*}$-algebras 
\[\begin{tikzcd}
	0 & I & A & Q & 0
	\arrow[from=1-1, to=1-2]
	\arrow[from=1-2, to=1-3]
	\arrow[from=1-3, to=1-4]
	\arrow[from=1-4, to=1-5]
\end{tikzcd}\] with a $G$-equivariant completely positive and contractive split $r\colon Q \to A$.
It is known (Proposition 9 of \cite{meyerkth}) that this yields a semi-exact sequence of algebras  
\[\begin{tikzcd}
	0 & {I \rtimes G} & {A \rtimes G} & {Q\rtimes G} & 0
	\arrow[from=1-1, to=1-2]
	\arrow[from=1-2, to=1-3]
	\arrow[from=1-3, to=1-4]
	\arrow[from=1-4, to=1-5]
\end{tikzcd}\] with completely positive and contractive split $\hat r\colon Q\rtimes G \to A \rtimes G$ induced by post-composition with $r$ on $C_c(G,Q) \to C_c(G,A)$, easily seen to also be $\hat G$-equivariant.

Thus, the composition $kk^{\hat G} \circ \rtimes G\colon GC^{*}\text{Alg}^{nu} \to KK^{\hat G}_{sep}$ is semi-exact, and we now conclude using the universal property of $KK^G_{sep}$.
\end{proof}
To proceed, we need the notion of an $\text{Ind}$-$s$-finitary functor:
\begin{ddd}\label{2.2694} \textup{(\cite[Def. 4.1]{bunke2023stable})\footnote{This definition and the following two lemmas are stated in \cite{bunke2023stable} for discrete groups. However, the arguments provided there work almost verbatim in our generality.}}\\
A functor $F\colon GC^{*}\textup{Alg}^{nu} \to HC^{*}\textup{Alg}^{nu}$ that preserves separable algebras is said to be $\textup{Ind}$-$s$-finitary if:
        \begin{enumerate}
            \item For all $A \in GC^*\textup{Alg}^{nu}$, the inductive system $F(A')^{F(A)}_{A' \subset_{sep} A}$ is cofinal in the system of all separable algebras of $F(A)$.
            \item The canonical map $F(A')_{A' \subset_{sep} A} \to (F(A')^{F(A)})_{A' \subset_{sep} A}$ is an isomorphism of inductive systems in $\textup{Ind}(HC^{*}\textup{Alg}^{nu})$.
        \end{enumerate}
        The notation $F(A')^{F(A)}$ denotes the image of $F(A')$ in $F(A)$.
\end{ddd}
\begin{lem}\label{2.9}
    Let $F\colon GC^{*}\textup{Alg}^{nu} \to HC^{*}\textup{Alg}^{nu}$ be a functor that preserves separable algebras and satisfies item 1 of Definition \ref{2.2694}. If $F$ preserves countably filtered colimits, then it is $\textup{Ind}$-$s$-finitary.
\end{lem}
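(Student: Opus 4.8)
The plan is to reduce the claim to a cofinality argument, exploiting the two hypotheses separately. We are given that $F$ preserves countably filtered colimits and satisfies item 1 of Definition \ref{2.2694}; we must verify item 2, namely that the canonical map of inductive systems $(F(A'))_{A' \subset_{sep} A} \to (F(A')^{F(A)})_{A' \subset_{sep} A}$ is an isomorphism in $\textup{Ind}(HC^{*}\textup{Alg}^{nu})$. First I would fix $A \in GC^{*}\textup{Alg}^{nu}$ and note that the indexing poset of separable $G$-$C^{*}$-subalgebras of $A$ is countably filtered (any countable family of separable subalgebras is contained in the separable subalgebra they generate), so that the colimit computing these Ind-objects is a countably filtered colimit.

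The key observation is that each comparison map $F(A') \to F(A')^{F(A)}$ is a surjection with some kernel $J_{A'}$, and what has to be shown is that this kernel becomes zero in the Ind-object, i.e.\ that for every separable $A' \subset A$ there is a larger separable $A'' \supset A'$ such that $J_{A'}$ maps to zero under $F(A') \to F(A'')$. To produce such an $A''$, I would take an element $x$ in the kernel, i.e.\ $x \in F(A')$ mapping to $0$ in $F(A)$. Since $F$ preserves the countably filtered colimit $\textup{colim}_{A' \subset_{sep} A} F(A') \xrightarrow{\sim} F(A)$ — here using that $A = \textup{colim}_{A' \subset_{sep} A} A'$ in $GC^{*}\textup{Alg}^{nu}$ and that $F$ preserves this particular countably filtered colimit — the image of $x$ being zero in $F(A)$ means that $x$ already maps to zero at some finite stage $A'' \supset A'$. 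Because $F(A')$ is separable it has a countable dense subset, so a single separable $A''$ suffices to kill a countable generating set of $J_{A'}$; by density and continuity of $\ast$-homomorphisms, $J_{A'}$ itself then maps to zero in $F(A'')$. Combined with item 1 (cofinality of the images), this gives that the two inductive systems have isomorphic colimits and indeed that the natural transformation is an isomorphism of Ind-objects.

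The main obstacle I anticipate is the bookkeeping needed to pass from ``each element of the kernel dies at some stage'' to ``the whole kernel dies at a single stage''. This requires using separability of $F(A')$ in an essential way: one picks a countable dense subset of $J_{A'}$, finds for each of its (countably many) elements a stage at which it dies, and then uses countable filteredness of the poset to find a common stage $A''$; finally one argues that a $\ast$-homomorphism vanishing on a dense subset of an ideal vanishes on the closure. A secondary subtlety is making sure the comparison really is a levelwise map of inductive systems so that checking it is an isomorphism reduces to the cofinality/colimit computation — but this is formal, since $F(A')^{F(A)}$ is by construction a subalgebra of $F(A)$ and the structure maps of the two systems are compatible by functoriality of $F$. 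I would also remark, as the footnote to Definition \ref{2.2694} already signals, that the argument is essentially that of the corresponding lemma in \cite{bunke2023stable}, carried out verbatim in the present generality.
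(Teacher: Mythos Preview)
Your proposal is essentially correct and matches what the paper does, which is simply to cite \cite[Lem.~4.3.1]{bunke2023stable}; you have sketched the argument behind that citation, and you even acknowledge this in your final remark.

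There is, however, one step you state too quickly. You write that since $F(A)\simeq \operatorname{colim}_{A'\subset_{sep}A}F(A')$, an element $x\in F(A')$ mapping to zero in $F(A)$ ``already maps to zero at some finite stage $A''\supset A'$''. This is not automatic for filtered colimits of $C^{*}$-algebras: in general, $x$ having image zero in the colimit only means that the norms $\|x_{A''}\|$ tend to zero along the system, not that some $\|x_{A''}\|$ vanishes exactly. What rescues the argument is precisely the countable filteredness of the indexing poset that you already established: choose for each $n$ a stage $A''_n\supset A'$ with $\|x_{A''_n}\|<1/n$, and then use that countably many separable subalgebras admit a common separable upper bound $A''$; since $C^{*}$-homomorphisms are norm-decreasing, $\|x_{A''}\|<1/n$ for all $n$, hence $x$ maps to zero in $F(A'')$. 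You invoke countable filteredness later to pass from a dense subset of $J_{A'}$ to a single stage, so you clearly have the tool in hand --- it just needs to be applied once more at this earlier step. With that adjustment the argument is complete.
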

\begin{proof}
    This is \cite[Lem. 4.3.1]{bunke2023stable}.
\end{proof}
The notion of $\text{Ind}$-$s$-finitariness is related to that of $s$-finitariness by the following result:
\begin{lem}\label{2.10}
    Let $F'\colon HC^{*}\textup{Alg}^{nu} \to D$ be an $s$-finitary functor, and $F\colon GC^{*}\textup{Alg}^{nu} \to HC^{*}\textup{Alg}^{nu}$ an $\textup{Ind}$-$s$-finitary functor. Then, the composite $F'\circ F\colon GC^{*}\textup{Alg}^{nu} \to D$ is an $s$-finitary functor.
\end{lem}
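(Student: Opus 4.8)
The plan is to prove $s$-finitariness of $F' \circ F$ directly from the definition by assembling three comparison equivalences, one supplied by each of the three hypotheses. Fix $A \in GC^*\text{Alg}^{nu}$, write $\mathcal{S}(A)$ for the filtered poset of separable $G$-$C^*$-subalgebras of $A$ and $\mathcal{S}(F(A))$ for the filtered poset of separable $H$-$C^*$-subalgebras of $F(A)$; we must show that the canonical map $\text{colim}_{A' \subset_{sep} A}(F'\circ F)(A') \to (F'\circ F)(A)$ is an equivalence.

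Since $F$ preserves separable algebras, $F(A')$ is separable for each $A' \in \mathcal{S}(A)$, hence so is its image $F(A')^{F(A)}$ in $F(A)$, and the assignment $A' \mapsto F(A')^{F(A)}$ defines a monotone map $\mathcal{S}(A) \to \mathcal{S}(F(A))$. Item 1 of Definition \ref{2.2694} asserts that its image is cofinal in $\mathcal{S}(F(A))$; since both posets are filtered, this is exactly cofinality of the functor $A' \mapsto F(A')^{F(A)}$, which gives the first equivalence
$$\text{colim}_{A' \subset_{sep} A}F'(F(A')^{F(A)}) \;\simeq\; \text{colim}_{B \subset_{sep} F(A)}F'(B).$$
The second equivalence is $s$-finitariness of $F'$ applied to the $H$-algebra $F(A)$, identifying $\text{colim}_{B \subset_{sep} F(A)}F'(B)$ with $F'(F(A)) = (F'\circ F)(A)$.

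For the third equivalence I would invoke item 2. As $D$ is cocomplete, $F'$ extends essentially uniquely, by left Kan extension along $HC^*\text{Alg}^{nu} \hookrightarrow \text{Ind}(HC^*\text{Alg}^{nu})$, to a filtered-colimit-preserving functor $\widehat{F'} \colon \text{Ind}(HC^*\text{Alg}^{nu}) \to D$; in particular $\widehat{F'}$ sends isomorphisms of inductive systems to equivalences and computes the value on an ind-system as the colimit of $F'$ applied levelwise. Item 2 says the canonical map of $\mathcal{S}(A)$-indexed inductive systems $F(A')_{A' \subset_{sep} A} \to (F(A')^{F(A)})_{A' \subset_{sep} A}$ is an isomorphism in $\text{Ind}(HC^*\text{Alg}^{nu})$, so applying $\widehat{F'}$ yields
$$\text{colim}_{A' \subset_{sep} A}(F'\circ F)(A') = \text{colim}_{A' \subset_{sep} A}F'(F(A')) \;\simeq\; \text{colim}_{A' \subset_{sep} A}F'(F(A')^{F(A)}).$$
Composing the three equivalences identifies $\text{colim}_{A' \subset_{sep} A}(F'\circ F)(A')$ with $(F'\circ F)(A)$. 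The one remaining point, and the part I expect to require the most care, is to check that this composite coincides with the canonical comparison map for $F'\circ F$ — i.e. that all three equivalences are compatible with the natural cocones to $(F'\circ F)(A)$. This is a diagram chase: each equivalence is built from the evident structure maps (the cofinal reindexing, the $s$-finitary structure map of $F'$, and $\widehat{F'}$ applied to the canonical map of ind-systems), so no genuinely new input is needed beyond the three hypotheses, but one does have to be careful to run all identifications through $\text{Ind}(HC^*\text{Alg}^{nu})$ so that item 2 can be used in the form stated.
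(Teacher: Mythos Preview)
Your argument is correct; the three-step decomposition (cofinality from item~1 of Definition~\ref{2.2694}, $s$-finitariness of $F'$, and the Ind-isomorphism of item~2 pushed through the Kan extension $\widehat{F'}$) is exactly the natural proof, and the compatibility check you flag at the end is indeed routine. The paper itself does not give an argument but simply cites \cite[Lem.~4.2]{bunke2023stable}, so there is no in-paper proof to compare against.
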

\begin{proof}
    This is \cite[Lem. 4.2]{bunke2023stable}.
\end{proof}
We can now conclude this section with its main result:
\begin{theorem}\label{twofin}
    The functor $-\rtimes G\colon KK^{G}_{sep} \to KK^{\hat G}_{sep}$ extends essentially uniquely to a colimit-preserving functor $-\rtimes G\colon KK^{G} \to KK^{\hat G}$.
\end{theorem}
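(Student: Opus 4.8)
The plan is to use the universal property of $KK^{G}$ stated earlier, which says precisely that colimit-preserving functors out of $KK^{G}$ correspond to homotopy invariant, $G$-stable, semi-exact and $s$-finitary functors out of $GC^{*}\text{Alg}^{nu}$. So it suffices to produce a functor $GC^{*}\text{Alg}^{nu} \to KK^{\hat G}$ with these four properties which restricts on separable algebras to $kk^{\hat G}_{sep} \circ (-\rtimes G)$. The obvious candidate is the composite
\[
GC^{*}\text{Alg}^{nu} \xrightarrow{\;-\rtimes G\;} \hat{G}C^{*}\text{Alg}^{nu} \xrightarrow{\;kk^{\hat G}\;} KK^{\hat G}.
\]
Since $kk^{\hat G}$ is homotopy invariant, $\hat G$-stable and semi-exact by its universal property, and since we have already shown (in the preceding propositions) that $-\rtimes G$ preserves separable algebras, homotopy equivalences, $K_G$-equivalences and semi-split exact sequences with their completely positive contractive splits, the composite is automatically homotopy invariant, $G$-stable and semi-exact. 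The only property that requires genuine work is $s$-finitariness of the composite.

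For $s$-finitariness I would invoke Lemma \ref{2.10}: since $kk^{\hat G}: \hat{G}C^{*}\text{Alg}^{nu} \to KK^{\hat G}$ is $s$-finitary (part of its universal property), it is enough to show that $-\rtimes G: GC^{*}\text{Alg}^{nu} \to \hat{G}C^{*}\text{Alg}^{nu}$ is $\textup{Ind}$-$s$-finitary in the sense of Definition \ref{2.2694}. For this I would appeal to Lemma \ref{2.9}: it suffices to check that $-\rtimes G$ preserves countably filtered colimits and satisfies item~1 of Definition \ref{2.2694}. The first is exactly Proposition \ref{colim} (preservation of filtered colimits, in particular countably filtered ones). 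For item~1, I would argue that for a $G$-$C^{*}$-algebra $A$, every separable $C^{*}$-subalgebra $B$ of $A \rtimes G$ is contained in $A' \rtimes G$ for some separable $G$-$C^{*}$-subalgebra $A' \subset A$: a countable dense subset of $B$ can be approximated by elements of $C_c(G,A)$, whose (compactly supported, continuous) values generate, together with their $G$-translates, a separable $G$-invariant subalgebra $A'$ of $A$; then $B \subset A' \rtimes G \subset A \rtimes G$, which gives cofinality of the system $\{(A'\rtimes G)^{A\rtimes G}\}_{A'\subset_{sep}A}$ among separable subalgebras of $A\rtimes G$.

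Having established that the composite $kk^{\hat G}\circ(-\rtimes G)$ is homotopy invariant, $G$-stable, semi-exact and $s$-finitary, the universal property of $KK^{G}$ yields an essentially unique colimit-preserving functor $-\rtimes G: KK^{G} \to KK^{\hat G}$ together with a commuting square relating it to $kk^{G}$ and $kk^{\hat G}\circ(-\rtimes G)$. To see that it genuinely \emph{extends} $-\rtimes G: KK^{G}_{sep} \to KK^{\hat G}_{sep}$, I would note that $KK^{G} = \text{Ind}(KK^{G}_{sep})$ and $kk^{G}$ is the left Kan extension of $kk^{G}_{sep}$ along $y \circ (\text{inclusion of separables})$; restricting the new functor to $KK^{G}_{sep}$ and using that both functors agree after precomposition with $kk^{G}_{sep}$ (by construction both restrict on separable algebras to $kk^{\hat G}_{sep}\circ(-\rtimes G)$, using the compatibility of $kk^{\hat G}$ with $kk^{\hat G}_{sep}$ via $y$) identifies it with the functor from the previous proposition. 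The main obstacle is the $s$-finitariness step, i.e.\ verifying that $-\rtimes G$ is $\textup{Ind}$-$s$-finitary; this reduces cleanly to Proposition \ref{colim} plus the subalgebra-approximation argument for item~1, but that approximation argument is where the actual analytic content lies.
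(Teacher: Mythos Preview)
Your proposal is correct and follows essentially the same route as the paper: reduce to showing that $-\rtimes G:GC^{*}\mathrm{Alg}^{nu}\to \hat{G}C^{*}\mathrm{Alg}^{nu}$ is $\mathrm{Ind}$-$s$-finitary via Lemma~\ref{2.10}, then verify item~1 of Definition~\ref{2.2694} by a separable-subalgebra approximation and item~2 via Lemma~\ref{2.9} together with Proposition~\ref{colim}. The only cosmetic difference is in the cofinality step: the paper approximates a countable dense subset of $B\subset A\rtimes G$ by finite linear combinations of elementary tensors $f\otimes a$ with $f\in C_c(G)$, $a\in A$, and takes $A'$ to be the $G$-$C^{*}$-subalgebra generated by the finitely many $a$'s appearing; your version instead uses the (compact, hence separable) ranges of approximating elements of $C_c(G,A)$, which is an equally valid way to produce the separable $A'$.
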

\begin{proof}
    By Lemma \ref{2.10}, all we have to check is that $-\rtimes G\colon GC^*\text{Alg}^{nu} \to \hat{G}C^*\text{Alg}^{nu}$ is $\text{Ind}$-$s$-finitary. 

    We first verify the cofinality condition of Definition \ref{2.2694}. Let $A$ be a $G$-$C^{*}$-algebra, and $B$ a separable $\hat{G}$-subalgebra of $A \rtimes G$. Let $\{f_i\}$ be a countable dense subset of $B$. It is a standard fact (see \cite[Lem. 1.87]{algwill}) that elements of the form $f \otimes a$, with $f \in C_c(G)$ and $a \in A$, generate $A \rtimes G$. Thus, for each $i$, one can find a sequence $\{f_{ik} = \Sigma_{l=1}^{n_{k,i}} g_{l,k,i} \otimes a_{l,k,i}\}$ of linear combinations of elementary tensors that converges to $f_i$. Consider the $G$-$C^{*}$-subalgebra $A'$ of $A$ generated by the collection $\{a_{l,k,i}\}$. Then $A'$ is separable and $B$ is contained in the image of $A' \rtimes G$ in $A \rtimes G$.

     By Proposition \ref{colim}, $-\rtimes G\colon GC^*\text{Alg}^{nu}\to \hat{G}C^{*}\text{Alg}^{nu}$ preserves filtered colimits. 

     The theorem now follows from Lemma \ref{2.9}.
\end{proof}
\section{Takai Duality}\label{three}

We retain the notation and conventions from the previous section - we will denote by $G$ a locally compact, second countable, abelian group with identity element $e$ and its Pontryagin dual by $\hat{G}$. The dual group $\hat G$ is endowed with the compact-open topology. We fix Haar measures $\mu$ and $\hat \mu$ on $G$ and $\hat G$, respectively. These are unique up to multiplication by a positive scalar.

Classically, the Takai duality statement is simply stated as the existence of a certain equivariant isomorphism of algebras (Theorem \ref{primo}); however, this is not sufficient for our purposes. We shall begin by restating this in categorical language.
\begin{theorem}
    Let $H\colon GC^*\textup{Alg}^{nu} \to GC^*\textup{Alg}^{nu}$ be the functor $A \mapsto (A \rtimes G) \rtimes \hat{G}$, with the double-dual action, and let $H'\colon GC^*\textup{Alg}^{nu} \to GC^*\textup{Alg}^{nu}$ be the functor $A \mapsto A \otimes K(L^2(G))$ with the tensor action (here $K(L^2(G))$ is given the usual conjugation action). Then there is an isomorphism of functors $F \simeq F'$.
\end{theorem}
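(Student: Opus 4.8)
The plan is to upgrade the classical Takai isomorphism of Theorem \ref{primo} to a natural transformation, and the key point is that the isomorphism $(A \rtimes G) \rtimes \hat G \cong A \otimes K(L^2(G))$ given by the standard proof is already visibly natural in $A$; one merely has to track the formulas. First I would recall the explicit construction of the Takai isomorphism: one works on the dense subalgebra $C_c(\hat G, C_c(G, A))$ of $(A \rtimes G) \rtimes \hat G$ and writes down the map to (a dense subalgebra of) $A \otimes K(L^2(G))$ using the Fourier transform on $\hat{\hat G} \cong G$ together with a conjugation by the regular representation; this is the content of, e.g., \cite[Thm. 7.1]{algwill}. Because the formula is built only out of the structure maps of $A$ (the multiplication, the involution, and the $G$-action) composed with fixed integral kernels that do not see $A$, any $G$-equivariant homomorphism $A \to A'$ intertwines the two sides. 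Hence I would first establish, for each $A$, that the Takai map $\tau_A : H(A) \to H'(A)$ is a $G$-equivariant $*$-isomorphism (citing the classical result), and then separately verify naturality of the square
\[\begin{tikzcd}
	{H(A)} && {H'(A)} \\
	{H(A')} && {H'(A')}
	\arrow["{\tau_A}", from=1-1, to=1-3]
	\arrow["{H(\varphi)}"', from=1-1, to=2-1]
	\arrow["{H'(\varphi)}", from=1-3, to=2-3]
	\arrow["{\tau_{A'}}", from=2-1, to=2-3]
\end{tikzcd}\]
for every morphism $\varphi : A \to A'$ in $GC^*\text{Alg}^{nu}$.

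To make the naturality check clean, I would work at the level of the pre-$C^*$-algebras $C_c(\hat G, C_c(G, -))$ and $C_c(G \times G, -)$ (a dense subalgebra of $A \otimes K(L^2(G))$, realized as $A$-valued kernel operators), where every map in sight is given by an honest pointwise formula: the functors $H$, $H'$ restrict to functors on these pre-$C^*$-algebra models that act on a section $F \in C_c(\hat G, C_c(G,A))$ simply by post-composition with $\varphi$, and the Takai map is an integral-kernel transformation whose coefficients are scalar-valued (characters of $G$ and $\hat G$, Haar measures) and therefore commute with post-composition by $\varphi$. Naturality on these dense subalgebras is then a one-line computation, and it passes to the completions because $\text{compl}$ is a functor (a left adjoint, as recalled after the definition of pre-$C^*$-algebras) and the completion of a dense subalgebra inclusion is an isomorphism. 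Finally I would note that $G$-equivariance of each $\tau_A$ is likewise checked on the dense subalgebra, using that the double-dual action on $H(A)$ corresponds under the construction to the tensor action on $H'(A)$ — again this is in the classical reference and involves no $A$-dependent data.

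The main obstacle — really the only subtlety — is bookkeeping: pinning down the precise form of the Takai isomorphism (there are several normalizations in the literature, differing by a Fourier transform and/or a flip, and the conjugation by the left regular representation must be inserted in the right spot to land in $K(L^2(G))$ with its \emph{conjugation} action rather than some twisted variant), and then confirming that with the conventions fixed in Section \ref{twotwo} (maximal crossed product, the $\hat G$-action $(\chi f)(g) = \chi(g) f(g)$, the chosen Haar measures $\mu, \hat\mu$) the map one writes down is both multiplicative and $*$-preserving on the nose. Once the formula is in hand, naturality and equivariance are automatic from the "scalar kernel" observation above, so I would spend the bulk of the argument carefully stating the isomorphism and citing \cite{algwill} for the verification that it is a $G$-equivariant $*$-isomorphism, and then dispatch the functoriality with the short computation on dense subalgebras followed by an application of functoriality of $\text{compl}$.
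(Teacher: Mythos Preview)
Your proposal is correct and follows essentially the same approach as the paper: both observe that the classical Takai isomorphism on dense subalgebras is given by integral formulas with scalar-valued kernels (the paper phrases this as ``integration commutes with bounded linear operators''), hence commutes with post-composition by any $G$-equivariant $*$-homomorphism, and both defer to \cite{algwill} for the verification that each $\tau_A$ is a $G$-equivariant $*$-isomorphism. The only organizational difference is that the paper factors the natural isomorphism as $H \simeq H_1 \simeq H'$ through an intermediate functor $H_1(A) = ((C_0(G) \otimes \underline{A}) \rtimes G, \kappa')$: naturality of $H \simeq H_1$ is your ``scalar kernel'' observation applied to the explicit integral formula for $\phi_1^A$, while naturality of $H_1 \simeq H'$ comes for free from Lemma~\ref{2.5}, since once the $A$-dependence is pulled outside the crossed product the map is literally $\phi_2 \otimes \mathrm{id}_A$ for a fixed isomorphism $\phi_2: C_0(G) \rtimes G \to K(L^2(G))$.
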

\begin{proof}
    The classical statement is that, for any $G$-$C^{*}$-algebra $A$, there exists an equivariant isomorphism $(A \rtimes G) \rtimes \hat{G} \cong A \otimes K(L^2(G))$; and this isomorphism is constructed as a sequence of isomorphisms (see \cite[Sec. 7.1]{algwill} for the proof). We will upgrade this to a sequence of isomorphisms between functors and focus on the naturality, omitting the well-established details of why each map that appears is an equivariant isomorphism.
    
    Let $A$ be a $G$-$C^{*}$-algebra with action $\alpha$. Classically, the major part of the argument is the construction of an algebra isomorphism $$\phi_1^{A}\colon (A \rtimes G) \rtimes \hat{G} \to ({C_0(G)} \otimes \underline{A}) \rtimes G$$ where $\underline A$ is the algebra $A$ with trivial $G$-action. Let $F$ be an element of $C_c(\hat G \times G, A)$, interpreted as an element of $(A \rtimes G) \rtimes \hat{G}$ under the identification $C_c(\hat G \times G, A) \cong C_c(\hat G, C_c(G,A)) \subset C_c(G,A \rtimes G) \subset (A \rtimes G) \rtimes \hat G$. For such an element, $\phi_1^{A}$ is described by the formula $$\phi_1^{A}(F)(s,r)= \int_{\hat G} \alpha_r^{-1}(F(\gamma,s))\overline{\gamma(s^{-1}r)}\hat{\mu}(\gamma)$$ where $\phi_1^{F}(-,r)$ is interpreted as an element of $C_c(G,C_0(G) \otimes \underline{A}) \subset ({C_0(G)} \otimes \underline{A}) \rtimes G$.
    
    Consider the algebra $C_0(G) \rtimes G$, where $G$ acts on $C_0(G)$ by left-multiplication.  Functoriality induces an action $\kappa$ of $G$ on $C_0(G) \rtimes G$. Let $\kappa'$ be the unique action on $({C_0(G)} \otimes \underline{A}) \rtimes G$ such that the isomorphism $\phi_{C_0(G),A}$ of Lemma \ref{2.5} becomes equivariant with respect to the action $\kappa \otimes \alpha$ on $(C_0(G) \rtimes G) \otimes A$.  We obtain a functor $H_1\colon GC^*\text{Alg}^{nu} \to GC^*\text{Alg}^{nu}$ given by $$(A,\alpha) \mapsto (({C_0(G)} \otimes \underline{A}) \rtimes G),\kappa')$$
    One can check that $\phi_1^{A}$ is $G$-equivariant with the double-dual action on $(A \rtimes G) \rtimes \hat{G}$ and the action $\kappa'$ on $({C_0(G)} \otimes \underline{A}) \rtimes G$. 
    
    As integration commutes with bounded linear operators (in particular, $C^{*}$-algebra morphisms), it follows that the family of isomorphisms $\{\phi_1^{A}\}_{A \in GC^{*}\text{Alg}^{nu}}$ assembles to a natural isomorphism of functors $H \simeq H_1$.

    We now construct a natural isomorphism $H_1 \simeq H'$ as follows:

    In the classical proof, an equivariant isomorphism $$\phi_2\colon({C_0(G)} \rtimes G,\kappa) \to (K(L^2(G)),\text{conjugation})$$ is first constructed. On tensoring by $(A,\alpha)$ and using the isomorphism $\phi_{C_0(G),A}$ of Lemma \ref{2.5}, we obtain an equivariant isomorphism $$\phi_{2}^{A}\colon ({C_0(G)} \otimes \underline{A}) \rtimes G \simeq A \otimes K(L^2(G))$$  The family of isomorphisms $\{\phi_2^{A}\}_{A \in GC^{*}\text{Alg}^{nu}}$ assembles to the desired natural isomorphism.
 
\end{proof}
\begin{kor}\label{3.1.1}
    The functor $(-\rtimes G)\rtimes \hat{G}\colon KK^{G} \to KK^{G}$ is equivalent to the identity functor on $KK^{G}$.
\end{kor}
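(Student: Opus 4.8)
The plan is to upgrade the functorial Takai isomorphism $H \simeq H'$ from the preceding theorem along the universal property of $kk^G$. First I would observe that both composites we care about are colimit-preserving endofunctors of $KK^G$: the functor $(-\rtimes G)\rtimes\hat G: KK^G \to KK^G$ is, by Theorem \ref{twofin} applied twice (once for $G$ and once for $\hat G$, using the Pontryagin isomorphism $\hat{\hat G}\cong G$), the colimit-preserving extension of the $C^*$-algebraic functor $A \mapsto (A\rtimes G)\rtimes\hat G$; and the identity is trivially colimit-preserving. So by the universal property of $kk^G$ it suffices to compare the underlying homotopy-invariant, $G$-stable, semi-exact, $s$-finitary functors $GC^*\text{Alg}^{nu} \to KK^G$, namely $kk^G\circ H$ and $kk^G\circ H'$ (where $H' (A)= A\otimes K(L^2(G))$).

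Next I would use the functorial isomorphism $H \simeq H'$ of the theorem just proved: composing with $kk^G$ yields a natural equivalence $kk^G\circ H \simeq kk^G\circ H'$ of functors $GC^*\text{Alg}^{nu}\to KK^G$. It then remains to identify $kk^G\circ H'$ with $kk^G$ itself. This is precisely $G$-stability of $kk^G$: the Hilbert space $L^2(G)$ is a separable unitary $G$-Hilbert space (it is nonzero), so the inclusion of the one-dimensional trivial subrepresentation $\mathbb{C} = K(\mathbb{C}) \hookrightarrow K(L^2(G))$ induces, after tensoring with any $A$, an equivalence $kk^G(A) = kk^G(A\otimes\mathbb{C}) \xrightarrow{\sim} kk^G(A\otimes K(L^2(G)))$, naturally in $A$. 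Chaining these two natural equivalences gives $kk^G\circ\big((-\rtimes G)\rtimes\hat G\big) \simeq kk^G$, and descending along the universal property produces the desired equivalence $(-\rtimes G)\rtimes\hat G \simeq \mathrm{id}_{KK^G}$.

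The one point requiring care — and the main obstacle — is the bookkeeping needed to see that the colimit-preserving extension of $A\mapsto(A\rtimes G)\rtimes\hat G$ agrees with the composite of the two extensions $(-\rtimes G): KK^G\to KK^{\hat G}$ and $(-\rtimes\hat G): KK^{\hat G}\to KK^{\hat{\hat G}} = KK^G$. This is a matter of uniqueness in the universal property: both functors $GC^*\text{Alg}^{nu}\to KK^G$ obtained by going around are homotopy-invariant, $G$-stable, semi-exact and $s$-finitary (for the composite, one invokes Lemma \ref{2.10} together with the $\text{Ind}$-$s$-finitariness established in Theorem \ref{twofin}), and they agree on $GC^*\text{Alg}^{nu}$ up to the evident natural isomorphism $kk^{\hat{\hat G}}\circ(-\rtimes\hat G)\circ(-\rtimes G)\simeq kk^G\circ H$ coming from $\hat{\hat G}\cong G$, so they must be equivalent as colimit-preserving functors. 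Once this identification is in place, the rest is the formal two-step chain of equivalences above.
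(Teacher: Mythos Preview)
Your overall strategy---reduce to the $C^*$-level via the universal property, apply the natural Takai isomorphism $H\simeq H'$, then invoke $G$-stability to kill the factor $K(L^2(G))$---is exactly the paper's, and your bookkeeping paragraph about the composite of the two descended crossed products is fine. The problem is the final step.

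You write that ``the inclusion of the one-dimensional trivial subrepresentation $\mathbb C=K(\mathbb C)\hookrightarrow K(L^2(G))$'' lets you apply $G$-stability. But $G$-stability (Definition~\ref{fund}.3) requires an \emph{equivariant} isometric embedding of $G$-Hilbert spaces $\mathbb C\hookrightarrow L^2(G)$, i.e.\ a $G$-invariant unit vector in the left-regular representation. In Section~\ref{three} the group $G$ is only assumed locally compact, second countable, abelian; when $G$ is non-compact (e.g.\ $\mathbb Z$ or $\mathbb R$) the regular representation has no nonzero invariant vectors, so no such embedding exists and your invocation of $G$-stability fails. For compact $G$ your argument is correct (take the constant function), but as stated the corollary covers the general case.

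The paper's proof repairs precisely this point: instead of a direct map it uses the natural zig-zag
\[
A \longrightarrow A\otimes \hat K_G \longleftarrow A\otimes K_G \longleftarrow A\otimes K(L^2(G))\cong (A\rtimes G)\rtimes\hat G,
\]
where $\hat K_G=K\bigl(\mathbb C\oplus(L^2(G)\otimes l^2)\bigr)$. Each leg now \emph{does} come from an equivariant isometric embedding (of $\mathbb C$, of $L^2(G)\otimes l^2$, and of $L^2(G)$, respectively, into the obvious targets), so Proposition~\ref{eqstab} applies to every arrow. Replacing your direct map by this zig-zag fixes the gap and the rest of your argument goes through unchanged.
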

\begin{proof}
We have a natural (in $A$) zig-zag of morphisms
\[\begin{tikzcd}
	A & {A \otimes \hat K_G} & {A \otimes K_G} & {A \otimes K(L^2(G)) \cong (A \rtimes G) \rtimes \hat G}
	\arrow[from=1-1, to=1-2]
	\arrow[from=1-3, to=1-2]
	\arrow[from=1-4, to=1-3]
\end{tikzcd}\]
Each map above is inverted by $kk^G$ (see Proposition \ref{eqstab}).
    \end{proof}
\begin{kor}\label{eq}
    The functor $-\rtimes G\colon KK^G\to KK^{\hat G}$ is an equivalence of $\infty$-categories with inverse $- \rtimes \hat G$.
\end{kor}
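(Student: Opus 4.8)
The plan is to deduce this formally from Corollary \ref{3.1.1}, applied both to $G$ and to $\hat G$. Write $P := {}-\rtimes G : KK^{G} \to KK^{\hat G}$ and $Q := {}-\rtimes \hat G : KK^{\hat G} \to KK^{\hat{\hat G}}$; using the canonical Pontryagin isomorphism $\hat{\hat G} \cong G$ to identify $KK^{\hat{\hat G}}$ with $KK^{G}$, the latter becomes a functor $Q : KK^{\hat G} \to KK^{G}$. With this identification, Corollary \ref{3.1.1} says precisely that $Q \circ P \simeq \mathrm{id}_{KK^{G}}$. Every construction and every result of Sections \ref{twoone}--\ref{three} was stated for an arbitrary locally compact, second countable, abelian group, and $\hat G$ is such a group; so Corollary \ref{3.1.1} also applies with $\hat G$ in place of $G$, and, identifying $\hat{\hat{\hat G}} \cong \hat G$ (the Pontryagin dual of $\hat{\hat G} \cong G$), it yields $R \circ Q \simeq \mathrm{id}_{KK^{\hat G}}$, where $R := {}-\rtimes \hat{\hat G}$ is, via these identifications, a functor $KK^{G} \to KK^{\hat G}$.

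Next I would invoke the elementary fact that a morphism of $\infty$-categories which admits both a left and a right quasi-inverse is an equivalence: from $Q \circ P \simeq \mathrm{id}$ and $R \circ Q \simeq \mathrm{id}$ one obtains $R \simeq R \circ (Q \circ P) \simeq (R \circ Q) \circ P \simeq P$, and hence also $P \circ Q \simeq R \circ Q \simeq \mathrm{id}_{KK^{\hat G}}$. Therefore $P = {}-\rtimes G$ is an equivalence, with quasi-inverse $-\rtimes \hat G$.

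The only point requiring care — and the sole reason the argument is not a one-liner — is the bookkeeping of the Pontryagin double-duality identifications: one must check that the canonical equivalences $KK^{\hat{\hat G}} \simeq KK^{G}$ and $KK^{\hat{\hat{\hat G}}} \simeq KK^{\hat G}$, induced by functoriality of the assignment $H \mapsto KK^{H}$ along group isomorphisms, are compatible, so that the two instances of Corollary \ref{3.1.1} genuinely compose as asserted. This is routine: an isomorphism of locally compact abelian groups induces an equivalence of the corresponding categories of $G$-$C^{*}$-algebras which is compatible with homotopy equivalences, $K_{G}$-equivalences, semi-split extensions, separable subalgebras and filtered colimits, hence with the entire tower of localizations defining $KK^{(-)}$ and $kk^{(-)}$. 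No analytic content remains to be addressed here; all of it has already been absorbed into Corollary \ref{3.1.1}.
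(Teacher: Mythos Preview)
Your argument is correct and is precisely the paper's approach: the paper's entire proof is the sentence ``Apply the previous corollary to $G$ and $\hat G$.'' You have merely unpacked this, making explicit the Pontryagin double-dual bookkeeping and the standard left-inverse/right-inverse trick that the paper leaves implicit.
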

\begin{proof}
    Apply the previous corollary to $G$ and $\hat G$.
\end{proof}

\begin{rem}
    In \cite[Cor. 6.21]{classically}, it is shown, using Takai duality, that the group homomorphisms $J_G$ and $J_{\hat G}$ (see Remark \ref{baajskandalis}) are mutually inverse isomorphisms between $G$-equivariant $KK$-theory groups and $\hat G$-equivariant $KK$-theory groups. In particular, when $G$ is discrete, they set up an equivalence between $KK$-theory for algebras with a $G$-action and $KK$-theory for algebras with a $G$-grading. The corollary above provides a (higher) categorical refinement of this result. 
\end{rem}
Our aim in the remainder of this section is to provide a different formula for the crossed product functor $- \rtimes G\colon KK^{G} \to KK^{\hat G}$ that is closer in form to the duality result of Treumann.

The Hilbert space $L^2(G)$ admits a (unitary) $\hat G$-action defined as follows. Let $f$ be an element of $L^2(G)$, interpreted as a function $f\colon G \to \mathbb C$; then, for a character $\chi$ in $\hat G$, we set $(\chi f)(g)\coloneq \chi(g)f(g)$. 
Conjugation now yields a $\hat G$-action on $K(L^2(G))$. Also, recall the canonical $G$-action on $K(L^2(G))$ by conjugation. 

\begin{lem}\label{3.2}
    The actions of $G$ and $\hat G$ on $K(L^2(G))$ commute and thus yield a $G \times \hat G$-action on $K(L^2(G))$.
\end{lem}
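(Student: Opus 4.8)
The plan is to verify directly that the two conjugation actions commute at the level of unitary operators on $L^2(G)$, from which commutativity of the induced actions on $K(L^2(G))$ is immediate. Concretely, for $g \in G$ let $\lambda_g$ denote the left-regular representation operator, $(\lambda_g f)(h) := f(g^{-1}h)$, and for $\chi \in \hat G$ let $m_\chi$ denote the multiplication operator $(m_\chi f)(h) := \chi(h) f(h)$. The $G$-action on $K(L^2(G))$ is $T \mapsto \lambda_g T \lambda_g^{*}$ and the $\hat G$-action is $T \mapsto m_\chi T m_\chi^{*}$. It therefore suffices to show that $\lambda_g$ and $m_\chi$ commute up to a scalar, i.e. that $m_\chi \lambda_g = c(g,\chi)\, \lambda_g m_\chi$ for some unimodular scalar $c(g,\chi)$, since conjugation kills the scalar.

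First I would compute this commutation relation explicitly. Applying both sides to $f \in L^2(G)$ and evaluating at $h$: on one side $(m_\chi \lambda_g f)(h) = \chi(h) f(g^{-1}h)$, and on the other $(\lambda_g m_\chi f)(h) = (m_\chi f)(g^{-1}h) = \chi(g^{-1}h) f(g^{-1}h) = \chi(g)^{-1}\chi(h) f(g^{-1}h)$. Hence $m_\chi \lambda_g = \chi(g)\, \lambda_g m_\chi$, so $c(g,\chi) = \chi(g)$, which is unimodular since $\chi$ is a character. Consequently, for any $T \in K(L^2(G))$,
\[
\lambda_g\bigl(m_\chi T m_\chi^{*}\bigr)\lambda_g^{*} = \chi(g)\,m_\chi \lambda_g T m_\chi^{*} \lambda_g^{*} \cdot \overline{\chi(g)} = m_\chi\bigl(\lambda_g T \lambda_g^{*}\bigr) m_\chi^{*},
\]
where I have inserted $\lambda_g^{*} m_\chi^{*} = \overline{c(g,\chi)}\, m_\chi^{*} \lambda_g^{*}$ obtained by taking adjoints of the commutation relation. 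This shows the two actions on $K(L^2(G))$ commute.

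Finally I would note that two commuting continuous actions of $G$ and $\hat G$ on a $C^{*}$-algebra assemble into a single continuous action of $G \times \hat G$: the formula $(g,\chi)\cdot T := \lambda_g m_\chi T m_\chi^{*} \lambda_g^{*}$ is a well-defined group homomorphism $G \times \hat G \to \mathrm{Aut}(K(L^2(G)))$ precisely because the two factors commute, and its continuity follows from the separate continuity of the two actions together with the fact that $K(L^2(G))$ is a $C^{*}$-algebra (joint continuity of a separately continuous action by a product of locally compact groups on a $C^{*}$-algebra). I do not expect any real obstacle here; the only point requiring a moment's care is tracking the unimodular cocycle $\chi(g)$ through the conjugation and confirming it cancels, which the computation above makes transparent.
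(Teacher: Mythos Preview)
Your argument is correct and essentially identical to the paper's: both introduce the unitary operators implementing the $G$- and $\hat G$-actions on $L^2(G)$, compute directly that they commute up to the unimodular scalar $\chi(g)$, and conclude that the conjugation actions on $K(L^2(G))$ commute. The only difference is cosmetic (your $\lambda_g, m_\chi$ are the paper's $T_g^{-1}, S_\chi$), and you add a brief remark on continuity of the combined action which the paper leaves implicit.
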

\begin{proof}
    For $\chi \in \hat G$ and $g \in G$, let us denote by $S_\chi$ and $T_g$ the linear operators representing the respective actions of $\chi$ and $g$ on $L^2(G)$. Explicitly, for an element $f$ of $L^2(G)$, we have $S_{\chi}(f)(h) = \chi(h)f(h)$ and $T_g(f)(h) = f(gh)$. Since $S_{\chi}^{-1} T_g$ and $T_g S_{\chi}^{-1}$ are scalar multiples of each other, it follows that $S_{\chi} T_g^{-1}AT_g S_{\chi}^{-1} = T_g^{-1}S_{\chi} AS_{\chi}^{-1} T_g$; so we can define an action of $G \times \hat G$ on $K(L^2(G))$ by $(g,\chi)(A) \coloneq T_gS_{\chi}^{-1}AS_{\chi} T_g^{-1}$.
\end{proof}
 For a $G$-$C^{*}$-algebra $A$, the $C^{*}$-algebra $(A \otimes K(L^2(G))) \rtimes G$ now has two different actions of $\hat G$ - one comes from the action described in Proposition \ref{2.1}, and the other comes from the functoriality of $(A \otimes - ) \rtimes G$ and the $\hat G$-action on $K(L^2(G))$ just described. It will follow from Theorem \ref{3.4} below that these two actions are the same in $KK^{\hat G}$.
 
We denote by $\hypertarget{cast}{\textbf{B}}$ the algebra $K(L^2(G))$ together with this $(G \times \hat G)$-action. We can now define a functor $\label{Takaieq }F\colon GC^{*}\text{Alg}^{nu} \to \hat{G}C^{*}\text{Alg}^{nu}$ by $A \mapsto (A \otimes \textbf{B}) \rtimes G$, where the $\hat G$-action is the one induced by the action of $\hat G$ on $\textbf{B}$. 
\begin{prop}\label{3.3}
    The functor $F\colon GC^{*}\textup{Alg}^{nu} \to \hat{G}C^{*}\textup{Alg}^{nu}$ descends essentially uniquely to a functor $F\colon KK^G \to KK^{\hat G}$.
\end{prop}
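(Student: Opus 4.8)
The plan is to appeal to the universal property of $KK^{G}$ recalled in Section~\ref{twoone}: since $KK^{\hat G}$ is stable and cocomplete, it suffices to show that the composite $kk^{\hat G}\circ F\colon GC^{*}\mathrm{Alg}^{nu}\to KK^{\hat G}$ is homotopy invariant, $G$-stable, semi-exact and $s$-finitary, and the resulting essentially unique colimit-preserving functor $KK^{G}\to KK^{\hat G}$ is then the desired descent. (One could instead descend stepwise along the localizations of Section~\ref{twoone}, exactly as was done for the crossed product; I describe the one-step version.) Throughout I use the factorization $F=(-\rtimes G)\circ(-\otimes\textbf{B})$, in which $-\rtimes G$ is the functor of Proposition~\ref{cros} applied to the $G$-$C^{*}$-algebra underlying $A\otimes\textbf{B}$, and $F(A)$ is further equipped with the $\hat G$-action coming from the factor $\textbf{B}$; this action is well defined because conjugation by the $S_{\chi}$ commutes with conjugation by the $T_{g}$ (Lemma~\ref{3.2}), and it commutes with the dual action for the trivial reason that the latter only rescales by $g$-dependent scalars.

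The ``soft'' properties follow by combining the corresponding facts for the two factors. The functor $-\otimes\textbf{B}$ preserves separable algebras ($L^{2}(G)$ is separable since $G$ is second countable) and homotopies, and $-\rtimes G$ is homotopy invariant by Proposition~\ref{omo}; hence $F$ is homotopy invariant. Since $\textbf{B}=K(L^{2}(G))$ is nuclear, $-\otimes\textbf{B}$ preserves short exact sequences and carries a completely positive contractive $G$-equivariant section to one of the same kind, so it preserves semi-split exact sequences; together with the corresponding property of $-\rtimes G$ (Proposition~9 of \cite{meyerkth}, as used in Section~\ref{twotwo}) this shows that $F$ is semi-exact. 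Finally both factors preserve filtered colimits ($-\otimes\textbf{B}$ commutes with the completion of the algebraic colimit, and $-\rtimes G$ by Proposition~\ref{colim}), and a separable $\hat G$-subalgebra of $F(A)$ is contained in the image of $F(A')$ for a suitable separable $G$-subalgebra $A'\subseteq A$ by the argument in the proof of Theorem~\ref{twofin} (elementary tensors generate the crossed product, and $\textbf{B}$ is separable); thus $F$ is $\mathrm{Ind}$-$s$-finitary by Lemma~\ref{2.9}, whence $kk^{\hat G}\circ F$ is $s$-finitary by Lemma~\ref{2.10}.

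The step requiring genuine care is $G$-stability. Let $V'\hookrightarrow V$ be an equivariant isometric embedding of separable unitary $G$-Hilbert spaces with $V'\neq0$. I reuse the natural isomorphism $\phi_{V}\colon L^{2}(G)\otimes V\cong L^{2}(G)\otimes\mathrm{Res}^{G}(V)$ constructed in the proof of Proposition~\ref{eqstab}; a short check shows it is equivariant not only for $G$ but also for the $\hat G$-action through the $L^{2}(G)$-factor, since the map involves only left multiplication by $\rho_{V}(h)^{-1}$ in the $V$-variable, which commutes with multiplication by a character value. Passing to compact operators and tensoring with $A$ yields a natural $(G\times\hat G)$-equivariant isomorphism $A\otimes K(V)\otimes\textbf{B}\cong A\otimes\textbf{B}\otimes\underline{K(V)}$ with $\underline{K(V)}$ carrying trivial actions; crossing out $G$ and applying Lemma~\ref{2.5} then gives a natural isomorphism $F(A\otimes K(V))\cong F(A)\otimes\underline{K(V)}$ in $\hat G C^{*}\mathrm{Alg}^{nu}$. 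Under these identifications the map $F(A\otimes K(V'))\to F(A\otimes K(V))$ becomes $F(A)\otimes\underline{K(V')}\to F(A)\otimes\underline{K(V)}$ induced by the trivially-equivariant isometric embedding $\mathrm{Res}^{G}(V')\hookrightarrow\mathrm{Res}^{G}(V)$, and since $\mathrm{Res}^{G}(V')\neq0$ this is inverted by $kk^{\hat G}$ by the $\hat G$-stability of $kk^{\hat G}$. Hence $kk^{\hat G}\circ F$ is $G$-stable, and all four conditions hold.

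I expect the main obstacle to be precisely this bookkeeping of the $\hat G$-action: the operators $S_{\chi}$ and $T_{g}$ implementing the two actions on $L^{2}(G)$ commute only up to the scalar $\chi(g)$ (Lemma~\ref{3.2}), so one must verify that each isomorphism in play — above all $\phi_{V}$, but also those of Lemmas~\ref{2.5} and~\ref{2.6} — is genuinely $\hat G$-equivariant for the conjugation-by-$S_{\chi}$ action, and not merely equivariant up to a dual-action twist. Trivializing the $G$-action via $\phi_{V}$ rather than via Lemma~\ref{2.6} is what keeps this clean, since conjugation by $\rho_{V}(h)$ genuinely commutes with multiplication by $\chi(h)$.
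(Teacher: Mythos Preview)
Your argument is correct, but the paper takes a cleaner, more modular route. Rather than tracking the $\hat G$-action by hand through each verification, the paper factors $F$ as the composite
\[
GC^{*}\mathrm{Alg}^{nu}\xrightarrow{\ i\ }(G\times\hat G)C^{*}\mathrm{Alg}^{nu}\xrightarrow{\ -\otimes\textbf{B}\ }(G\times\hat G)C^{*}\mathrm{Alg}^{nu}\xrightarrow{\ -\rtimes G\ }\hat G C^{*}\mathrm{Alg}^{nu},
\]
where $i$ is inflation along $G\times\hat G\to G$ and the last functor uses only the $G$-coordinate to form the crossed product, retaining the $\hat G$-action by functoriality. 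Each factor then descends for reasons already on the shelf: $i$ descends by a direct check with Proposition~\ref{eqstab}, $-\otimes\textbf{B}$ descends by the symmetric monoidality of $kk^{G\times\hat G}$, and $-\rtimes G$ descends by repeating verbatim the argument of Section~\ref{twotwo} (in particular Proposition~\ref{stabledesc}) with the ambient $\hat G$-equivariance carried along for free. The payoff of the paper's approach is that the delicate bookkeeping you flag in your final paragraph --- checking that $\phi_{V}$, $\phi_{A,\underline{K(V)}}$, etc.\ are genuinely $\hat G$-equivariant and not merely so up to a dual-action twist --- disappears entirely: once one works in $(G\times\hat G)C^{*}\mathrm{Alg}^{nu}$, the $\hat G$-action is part of the equivariance data and every isomorphism invoked is automatically $(G\times\hat G)$-equivariant. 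Your direct verification, by contrast, is more self-contained (it avoids appealing to the symmetric monoidal refinement of $kk^{G\times\hat G}$) and makes the $G$-stability step pleasantly explicit via $\phi_{V}$; the observation that $\phi_{V}$ commutes with $S_{\chi}\otimes\mathrm{id}$ on the nose is exactly the content hidden inside the paper's footnote about Proposition~\ref{stabledesc} going through.
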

\begin{proof}
    We realize $F$ as a composite of three functors -$$i\colon GC^{*}\text{Alg}^{nu} \hookrightarrow (G \times \hat G)C^{*}\text{Alg}^{nu}$$$$-\otimes \textbf{B}\colon (G \times \hat G)C^{*}\text{Alg}^{nu} \to (G \times \hat G)C^{*}\text{Alg}^{nu}$$ $$- \rtimes G\colon (G \times \hat G)C^{*}\text{Alg}^{nu} \to \hat GC^*\text{Alg}^{nu}$$ where, in the third functor, the $G$-action is used to form the crossed product and the $\hat G$-action on the target is induced by functoriality.
    
    One easily checks (using Proposition \ref{eqstab}) that the first functor descends. The second functor descends by the symmetric monoidality of $kk^{G \times \hat G}$. That the third functor descends is seen through an identical\footnote{To see why the steps go through, the main point to note is that the isomorphisms appearing in the proof of Proposition \ref{stabledesc} are $\hat G$-equivariant if $A$ is a $(G \times \hat G)$-$C^{*}$-algebra, and if $V'$ and $V$ are $(G \times \hat G)$-Hilbert spaces. The rest of the argument is trivial.} argument as in the descent of $- \rtimes G\colon GC^{*}\text{Alg}^{nu} \to \hat GC^{*}\text{Alg}^{nu}$ explained in the previous section.
\end{proof}
\begin{theorem}\label{3.4}
There is an isomorphism of functors $F\colon KK^G \to KK^{\hat G}$ and $- \rtimes G: KK^{G} \to KK^{\hat G}$.
\end{theorem}
\begin{proof}
    Write $V$ for $L^2(G)$. Let $\underline{\textbf{B}}$ denote the $G$-$C^{*}$-algebra $K(V)$ with trivial $G$-action. For a $G$-$C^{*}$-algebra $A$, we have the isomorphism of Lemma \ref{2.6} (since translation operators are unitary) $$\psi_{A,\textbf{B}}\colon(A \otimes \textbf{B}) \rtimes G \to (A \otimes \underline{\textbf{B}}) \rtimes G$$
    Identifying $A \otimes K(V)$ with $A$-valued convolution kernels of the form $a(g',g)$, the formula of Lemma \ref{2.6} says that $\psi_{A,\textbf{B}}$ is given by $a_k(g',g) \mapsto a_k (k^{-1}g',g)$.
    
    We give $(A \otimes \textbf{B}) \rtimes G$ the $\hat G$-action of $F(A)$, and define an action on $(A \otimes \underline{\textbf{B}}) \rtimes G$ by $\chi(a_k(g',g)) \coloneq \chi(k(g')^{-1}g)a_k(g',g)$. One can check that $\psi_{A,\textbf{B}}$ is $\hat G$-equivariant with respect to these actions.
    
    We have the natural algebra isomorphism of Lemma \ref{2.5}$$\phi_{A,\underline{\textbf{B}}}\colon (A \rtimes G) \otimes \underline{\textbf{B}} \to (A \otimes \underline{\textbf{B}}) \rtimes G$$ which is given by $f(k) \otimes c(g',g) \mapsto (k \mapsto f(k) \otimes c(g',g))$. We introduce a $\hat G$-action on $(A \rtimes G) \otimes \underline{\textbf{B}}$ by setting $\chi(f(k) \otimes c(g',g)) \coloneq \chi(k)f(k) \otimes \chi((g')^{-1}g)c(g',g)$. One can check that $\phi_{A,\underline{\textbf{B}}}$ is $\hat G$-equivariant with respect to these actions.
    
    We can now make the final comparison with the classical Takai duality functor $ -\rtimes G$. Consider the algebra $A \rtimes G$ with the ``standard" $\hat G$-action, i.e. given by $(\chi f)(k) = \chi(k)f(k)$. As in the proof of Corollary \ref{3.1.1}, we define a zig-zag $\eta _A$ of $\hat G$-equivariant morphisms
\[\begin{tikzcd}
	{A \rtimes G} & {(A \rtimes G) \otimes \hat K_{G}} & {(A \rtimes G) \otimes K_G} & {(A \rtimes G) \otimes K(L^2(G)) = (A \rtimes G) \otimes \underline{\textbf{B}}} & {} & {}
	\arrow[from=1-1, to=1-2]
	\arrow[from=1-3, to=1-2]
	\arrow[from=1-4, to=1-3]
\end{tikzcd}\] It is clear that $\eta_A$ is inverted by $kk^{\hat G}$ (Proposition \ref{eqstab}).
    
    To summarize, one has the following zig-zag of natural equivariant algebra morphisms that become equivalences on descent to $KK^{\hat{G}}$, and thus prove the theorem:
\[\begin{tikzcd}
	{F(A)=(A \otimes \textbf{B}) \rtimes G} && {(A \otimes \underline{\textbf{B}}) \rtimes G} && {(A \rtimes G) \otimes \underline{\textbf{B}}} & {A \rtimes G}
	\arrow["{\psi_{A,\textbf{B}}}", from=1-1, to=1-3]
	\arrow["{\phi_{A,\underline{\textbf{B}}}}"', from=1-5, to=1-3]
	\arrow["{\eta_A}"', from=1-6, to=1-5]
\end{tikzcd}\]
\end{proof}
\section{Treumann Duality}\label{four}
We begin with some preliminaries.

For a ring spectrum $R$, we write $\text{LMod}(R)$ for the presentable stable $\infty$-category of left modules\footnote{When $R$ is commutative, we shall simply write $\text{Mod}(R)$.} over $R$. We will denote by $\text{Perf}(R) \subset \text{LMod}(R)$ the full stable subcategory of perfect modules. Given a space $X$, we set $R[X] \coloneq R \wedge \Sigma_{+}^{\infty} X$. If $X$ is a group in spaces, $R[X]$ acquires the structure of a ring spectrum from the ring structure on $R$ and the group structure on $X$. If $R$ and $G$ are commutative as a ring spectrum and group, respectively, $R[X]$ is a commutative ring spectrum.

Given ring spectra $R$ and $S$, we remark that the colimit-preserving functors $\text{LMod}(R) \to \text{LMod}(S)$ correspond precisely to $(R,S)$-bimodules; see \cite [Prop. 7.1.2.4]{ha} for a precise statement.
\begin{prop}\label{long}
    Let $X$ be a connected space. There is an equivalence of $\infty$-categories over $\textup{LMod}(R)$ $$\textup{LMod}(R[\Omega X]) \simeq \textup{Fun}(X,\textup{LMod}(R))$$
\end{prop}
\begin{proof}
    Let $\underline{R}: * \to \textup{LMod}(R)$ be the functor that picks out the object $R \in \textup{LMod}(R)$, and let $i_{!}\underline{R}: X \to \text{LMod}(R)$ denote the functor obtained by left Kan extending $\underline{R}$ along a basepoint inclusion $i:* \to X$. One readily checks that $i_!\underline{R}$ generates $\text{Fun}(X,\text{LMod}(R))$ in the sense of the variant (\cite[Thm. 7.1.2.1]{ha}) of the Schwede-Shipley theorem. Further, an easy calculation\footnote{As monoids, $\Omega X \simeq \text{Nat}(i,i)$; so one obtains a monoid map $\Omega X \to \text{End}(i_!\underline{R})$ which induces the desired identification.} shows that the endomorphism algebra of $i_{!}\underline{R}$ can be identified with $R[\Omega X]$ as algebras, and thus the desired equivalence follows from (\cite[Rem. 7.1.2.3]{ha}). To see that the equivalence is one over $\text{LMod}(R)$, one only has to observe that both the forgetful functor $\text{LMod}(R[\Omega X]) \to \text{LMod}(R)$ and the composite of the equivalence with the evaluation functor $\text{LMod}(R[\Omega X]) \cong \text{Fun}(X,\text{LMod}(R)) \to \text{LMod}(R)$ preserve colimits, and take the compact generator $R[G]$ of $\text{LMod}(R[G])$ to $R[G]$ regarded as an object of $\text{LMod}(R)$.
\end{proof} 
If $G$ is a group, the equivalence above specializes to $\text{LMod}(R[G]) \cong \text{Fun}(BG,\text{LMod}(R))$.

We write $\text{LMod}(R[G])^{ft}$ for the full subcategory of $\text{LMod}(R[G])$ generated by those modules whose homotopy groups are finitely generated over $\pi_0(R)$. We refer to the objects of $\text{LMod}(R[G])^{ft}$ as modules of finite type over $R[G]$.

Fix a prime $p$ for the remainder of this section, and let $KU_p$ denote the $p$-completion of $KU$. \begin{prop}\label{4.1}
\begin{enumerate}
\leavevmode
    \item For a finite group $G$, the equivalence of Proposition \ref{long} induces an equivalence $$\textup{LMod}(KU_p[G])^{ft} \simeq \textup{Fun}(BG, \textup{Perf}(KU_p))$$ 
    \item Let $G$ be a finite group and $M \in \textup{Fun}(BG,\textup{Perf}(KU_p))$. Then the $p$-completed homotopy orbits $(M_{hG})_p \in \textup{Perf}(KU_p)$.
    \item For finite abelian groups $G_1$ and $G_2$, and a $KU_p[G_1 \times G_2]$-bimodule $M$ of finite type, the functor
\[\begin{tikzcd}
	{\textup{Mod}(KU_p[G_1])} && {\textup{Mod}(KU_p[G_2])} 
	\arrow["{(-\otimes_{KU_p[G_1]} M)_p}", from=1-1, to=1-3]
\end{tikzcd}\] sends modules of finite type to modules of finite type.
\end{enumerate}
\end{prop}
\begin{proof}
\leavevmode
    \begin{enumerate}
        \item Since the equivalence of Proposition \ref{long} preserves the underlying $KU_p$-module, it suffices to prove that over $KU_p$, the finite type modules are precisely the perfect modules. One readily sees that every perfect module is of finite type.
 
        We will now show that $\text{Mod}(KU_p)^{ft} \subset \textup{Mod}(KU_p)^{\textup{perf}}$. Suppose $M$ is a $KU_p$-module with finitely generated homotopy groups over $\mathbb Z_p$. Since $\pi_*(KU_p)$ has global dimension $1$ as a graded ring, we can find a resolution $0 \to F_1 \to F_0 \to \pi_*(M) \to 0$ where $F_1$ and $F_0$ are finitely generated free modules over $\pi_*(KU_p)$. This lifts to an exact square in $\text{Mod}(KU_p)$
\[\begin{tikzcd}
	{M_1} && {M_0} \\
	\\
	0 && M
	\arrow[from=1-3, to=3-3]
	\arrow[from=1-1, to=1-3]
	\arrow[from=1-1, to=3-1]
	\arrow[from=3-1, to=3-3]
\end{tikzcd}\]
        where both $M_1$ and $M_0$ are finite direct sums of $KU_p$ and $\Sigma KU_p$. This finishes the proof.
\item Let $P$ be a $p$-Sylow subgroup of $G$. As in the proof of \cite[Cor. A.10]{treumann2015representations}, we note that $M$ is a retract of $M \otimes KU_p[G/P]$ in $\text{LMod}(KU_p[G])$, so $M_{hG}$ is a retract of $M_{hP}$. Therefore we reduce to the case that $G$ is a $p$-group. For a normal subgroup $N \subset G$, we have that $M_{hG} \simeq (M_{hN})_{h(G/N)}$; as the center of a $p$-group is non-trivial, we reduce to the case that $G = C_p$. By \cite[Thm. A.11]{treumann2015representations}, it suffices to check the assertion when $M = KU_p$ with the trivial $G$-action and when $M = KU_p[G]$ with $G$ acting by permutations. In the latter case, $M_{hG}$ is simply $KU_p$. In the former case, by \cite[Thm. 0.0.1]{lurie}, $(M_{hG})_p$ identifies with $M^{hG}$. By the case $X = *$ of the strong version of the Atiyah-Segal completion theorem \cite[Prop. 4.2]{completion} and the observation that completing at the augmentation ideal is the same as $p$-completion for $G = C_p$, we deduce\footnote{The cited reference states that the fixed points of $KU$ with the trivial $G$-action is $\oplus _G KU_p$, but $p$-completion commutes with fixed points.} that $M^{hG} \cong \oplus_G KU_p$, which is clearly perfect.  This concludes the proof.
\item If $N \in \text{Mod}(KU_p)[G_1]^{ft}$, the K\"unneth theorem yields that $N \otimes_{KU_p} M \in \text{Mod}(KU_p)[G_2]^{ft}$. By the previous part we now conclude that $$ ((N \otimes_{KU_p} M)_{hG_1})_p \simeq (N \otimes_{KU_p[G_1]} M)_p $$ is perfect.
\end{enumerate}
\end{proof}
We now briefly recall the setup for topological $K$-theory of $C^{*}$-categories, as described in \cite{bunke2021additive}.

We begin with a recollection of some basic definitions as presented in Section 2 of \textit{loc.cit.}. A possibly non-unital $\mathbb C$-linear $*$-category is a possibly non-unital category enriched over the category of $\mathbb C$-vector spaces together with an involution $*$ that fixes the objects, reverses the direction of the morphisms, and acts complex anti-linearly on the morphism spaces. We have a category $^{*}\text{Cat}^{nu}_{\mathbb C}$ with objects the possibly non-unital $\mathbb C$-linear $*$-categories, and morphisms the $\mathbb C$-linear functors of the underlying non-unital categories that additionally preserve the involution $*$. For a Hilbert space $H$, the $C^{*}$-algebra $B(H)$ can be regarded as a $\mathbb C$-linear $*$-category with one object. We now define the maximal norm $||-||_{\text{max}}$ on the morphism spaces of a possibly non-unital $\mathbb C$-linear $*$-category $\mathcal{C}$. For a morphism $f$ of $\mathcal{C}$, we set $$||f||_{\text{max}} \coloneq \text{sup}_{\rho\colon \mathcal{C} \to B(H)}||\rho(f)||$$where the supremum is taken over all morphisms $\rho$ in $^{*}\text{Cat}^{nu}_{\mathbb C}$, and $B(H)$ is given the operator norm. An object of $^{*}\text{Cat}^{nu}_{\mathbb C}$ is called a $C^{*}$-category if all its morphism spaces are complete with respect to $||-||_{\text{max}}$ \footnote{This includes the assumption that $||-||_{\text{max}}$ is indeed a norm on the morphism spaces.}. We denote by $C^{*}\text{Cat}^{nu}$ the full subcategory of $^{*}\text{Cat}^{nu}_{\mathbb C}$ spanned by the $C^{*}$-categories \footnote{Henceforth, whenever we speak of functors between $C^{*}$-categories, we always refer to morphisms in $C^{*}\text{Cat}^{nu}$.}. A $C^{*}$-algebra can be regarded as a $C^{*}$-category with a single object. This provides an inclusion functor $i\colon C^{*}\text{Alg}^{nu} \hookrightarrow C^{*}\text{Cat}^{nu}$, which admits a left adjoint $A^{f}\colon C^{*}\text{Cat}^{nu} \to C^{*}\text{Alg}^{nu}$. We now define the $K$-theory functor $K^{\textbf{C}}$ as the composite 
\[\begin{tikzcd}
	{K^{\textbf{C}}\colon C^{*}\text{Cat}^{nu}} && {C^{*}\text{Alg}^{nu}} && {\text{Mod}(KU)}
	\arrow["{A^{f}}", from=1-1, to=1-3]
	\arrow["{K}", from=1-3, to=1-5]
\end{tikzcd}\]
The composite $A^f \circ i$ is isomorphic to the identity functor, so for a $C^{*}$-algebra $A$ one has $K^{\textbf{C}}(i(A)) \simeq K(A)$.

Let  $C^{*}\text{Cat}$ be the subcategory of $C^{*}\text{Cat}^{nu}$ comprising of the unital $C^{*}$-categories and unital $C^{*}$-functors. We abuse notation and write $K^{\textbf{C}}$ also for the composite 
\[\begin{tikzcd}
	{C^{*}\text{Cat}} && {C^{*}\text{Cat}^{nu}} && {\text{Mod}(KU)}
	\arrow[hook, from=1-1, to=1-3]
	\arrow["{K^{\textbf{C}}}", from=1-3, to=1-5]
\end{tikzcd}\]
We next describe classes of morphisms in $C^{*}\text{Cat}$ that are inverted by $K^{\textbf{C}}$. This helps felicitate computations as the functor $A^{f}$ has a complicated description.

We first describe the notion of a unitary equivalence between unital $C^{*}$-categories (\cite[Sec. 8.5.4]{bunke2020homotopy}). Let $\textbf{C}$ and $\textbf{D}$ be unital $C^{*}$-categories, and $f,g\colon \textbf{C} \to \textbf{D}$ be unital functors. We say that $f$ and $g$ are unitarily isomorphic if there exists a natural transformation $u\colon f \to g$ such that $u(c)\colon f(c) \to g(c)$ is unitary for each object $c$ of $\textbf{C}$. A unital functor $f\colon \textbf{C} \to \textbf{D}$ between unital $C^{*}$-categories is a unitary equivalence if there exists a functor $g\colon \textbf{D} \to \textbf{C}$ such that $f \circ g$ and $g \circ f$ are isomorphic to the respective identity functors on $\textbf{D}$ and $\textbf{C}$. If $f$ is a unitary equivalence, then $K^{\textbf{C}}(f)$ is an equivalence (\cite[Cor. 8.63]{bunke2020homotopy}).

Thus, if $W$ denotes the class of unitary equivalences in $C^{*}\text{Cat}$, one has the following factorization: 
\[\begin{tikzcd}
	{C^{*}\text{Cat}} &&& {\text{Mod}(KU)} \\
	& {C^{*}\text{Cat}[W^{-1}]}
	\arrow["{K^{\textbf{C}}}", from=1-1, to=1-4]
	\arrow["{L_W}"', from=1-1, to=2-2]
	\arrow["K^{\mathcal{C}}",from=2-2, to=1-4]
\end{tikzcd}\]
where $L_W$ is the localization functor. 

 In \cite{ambrcat}, the construction of a maximal tensor product $-\otimes-$, of two unital $C^{*}$-categories is described, and it is shown that this endows $C^{*}\text{Cat}$ with a symmetric monoidal structure such that the inclusion $C^{*}\text{Alg} \hookrightarrow C^{*}\text{Cat}$ is symmetric monoidal. In \cite{bunke2023stable}, this was upgraded - a maximal tensor product $-\otimes-$ is constructed on $C^{*}\text{Cat}^{nu}$ which gives it a symmetric monoidal structure such that the inclusions $C^{*}\text{Cat} \hookrightarrow C^{*}\text{Cat}^{nu}$ and $i\colon C^{*}\text{Alg}^{nu} \hookrightarrow C^{*}\text{Cat}^{nu}$ are both symmetric monoidal. 

\begin{prop}
\leavevmode
\begin{enumerate} 
    \item The localization functor $L_W$ admits a symmetric monoidal refinement.
    \item The functor $K^{\mathcal{C}}$ admits a lax symmetric monoidal refinement for this symmetric monoidal structure on $C^{*}\textup{Cat}[W^{-1}]$.
    \end{enumerate}
\end{prop}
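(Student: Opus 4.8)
The plan is to obtain both parts from the theory of Dwyer--Kan localizations of symmetric monoidal $\infty$-categories. For the first part, the crucial point is that the class $W$ of unitary equivalences in $C^{*}\textup{Cat}$ is compatible with the maximal tensor product, in the sense that $W$ is stable under tensoring with an arbitrary object: if $f:\textbf{C}\to\textbf{D}$ is a unitary equivalence with unitary inverse $g:\textbf{D}\to\textbf{C}$ and unitary natural isomorphisms $gf\cong\textup{id}_{\textbf{C}}$ and $fg\cong\textup{id}_{\textbf{D}}$, then for any unital $C^{*}$-category $\textbf{E}$ the functor $f\otimes\textup{id}_{\textbf{E}}$ is again a unitary equivalence, with inverse $g\otimes\textup{id}_{\textbf{E}}$ and with unitary natural isomorphisms obtained by tensoring the given ones with $\textup{id}_{\textbf{E}}$; this uses only the functoriality of $-\otimes-$ on $C^{*}\textup{Cat}$ (from \cite{ambrcat}, \cite{bunke2023stable}) and the fact that the tensor product of a unitary morphism with an identity is unitary. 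Once this is checked, the standard localization theorem for symmetric monoidal $\infty$-categories (Lurie, \emph{Higher Algebra}) endows $C^{*}\textup{Cat}[W^{-1}]$ with a symmetric monoidal structure, refines $L_{W}$ to a symmetric monoidal functor, and yields the universal property that precomposition with $L_{W}$ identifies symmetric monoidal functors out of $C^{*}\textup{Cat}[W^{-1}]$ with symmetric monoidal functors out of $C^{*}\textup{Cat}$ that invert $W$.

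For the second part, I would first promote $K^{\mathbf{C}}:C^{*}\textup{Cat}\to\textup{Mod}(KU)$ to a lax symmetric monoidal functor and then descend it along $L_{W}$. Recall that $K^{\mathbf{C}}$ is the composite of the symmetric monoidal inclusion $C^{*}\textup{Cat}\hookrightarrow C^{*}\textup{Cat}^{nu}$, the functor $A^{f}:C^{*}\textup{Cat}^{nu}\to C^{*}\textup{Alg}^{nu}$, and $K:C^{*}\textup{Alg}^{nu}\to\textup{Mod}(KU)$. The functor $A^{f}$ is symmetric monoidal: the canonical comparison map between $A^{f}(\textbf{C}\otimes\textbf{D})$ and $A^{f}(\textbf{C})\otimes A^{f}(\textbf{D})$ is an isomorphism, as one sees on the dense $*$-subalgebras of finitely supported ``matrices'' indexed by the objects and then on completions, and it carries the unit to the unit since $A^{f}\circ i\cong\textup{id}$ (compare the constructions in \cite{bunke2021additive}). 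The functor $K$ carries a lax symmetric monoidal refinement given by the external product in topological $K$-theory, with unit $KU=K(\mathbb{C})$ (see \cite{bunke2023survey}, \cite{bunke2024kk}). Hence $K^{\mathbf{C}}$ is lax symmetric monoidal, and since it inverts $W$ by \cite[Cor. 8.63]{bunke2020homotopy}, the lax version of the universal property from the first part descends it to a lax symmetric monoidal functor $K^{\mathcal{C}}:C^{*}\textup{Cat}[W^{-1}]\to\textup{Mod}(KU)$, whose underlying functor is the $K^{\mathcal{C}}$ appearing in the factorization triangle above.

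The step I expect to be the main obstacle is the passage, needed in the second part, from the universal property of $L_{W}$ for strong symmetric monoidal functors to the corresponding universal property for lax symmetric monoidal functors; it is only this lax form that permits transporting the lax monoidal structure on $K^{\mathbf{C}}$ across $L_{W}$. I would handle this either via the operadic refinement of the localization statement --- localizing the $\infty$-operad underlying $C^{*}\textup{Cat}^{\otimes}$, after checking that the relevant class of morphisms of the operad is again stable under the operadic structure --- or, equivalently, by identifying lax symmetric monoidal functors into $\textup{Mod}(KU)$ with commutative algebras for the Day convolution symmetric monoidal structure and using that restriction along the symmetric monoidal $L_{W}$ is a fully faithful lax symmetric monoidal functor onto the full subcategory of $W$-inverting functors, which then induces an equivalence on commutative algebra objects. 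A secondary point that should be checked carefully against \cite{bunke2021additive} is the monoidality of $A^{f}$ together with unit-preservation, and against \cite{bunke2024kk} the lax monoidality of $K$; neither is hard, but both are needed for $K^{\mathbf{C}}$ to carry the required structure in the first place.
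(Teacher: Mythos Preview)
Your argument for part~1 is essentially identical to the paper's: both reduce to the observation that tensoring a unitary equivalence with an identity yields a unitary equivalence, and then invoke the symmetric monoidal localization machinery.

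For part~2 you take a genuinely different route. The paper factors $K^{\mathbf{C}}$ through $KK$: it uses that $kk^{\mathbf{C}}=kk\circ A^{f}$ is \emph{strong} symmetric monoidal (this is cited as \cite[Prop.~7.9]{bunke2023stable} rather than argued via $A^{f}$ alone), descends $kk^{\mathbf{C}}$ through $L_{W}$ using only the universal property for strong symmetric monoidal functors to obtain a symmetric monoidal $kk^{\mathcal{C}}:C^{*}\textup{Cat}[W^{-1}]\to KK$, and only then composes with the lax symmetric monoidal $KK(\mathbb{C},-)$. In other words, the lax part is applied \emph{after} descent, so no lax universal property of the localization is ever needed. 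Your approach instead assembles the lax structure on $K^{\mathbf{C}}$ first and then attempts to push it across $L_{W}$; this is exactly the obstacle you flag, and while your proposed workarounds (operadic localization, Day convolution) are valid, the paper simply sidesteps the issue by postponing the lax step. A secondary point: you assert that $A^{f}$ itself is symmetric monoidal; the paper does not claim this and only cites the monoidality of the composite $kk^{\mathbf{C}}$, so if you keep your route you should either verify the claim about $A^{f}$ directly or, more safely, replace it with the cited statement about $kk^{\mathbf{C}}$ together with the lax monoidality of $KK(\mathbb{C},-)$, which already gives $K^{\mathbf{C}}$ its lax structure.
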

\begin{proof}
\leavevmode
    \begin{enumerate}
        \item This follows from the fact that for any unitary equivalence $f\colon \textbf{C} \to \textbf{C}'$ and any object $\textbf{D}$ of $C^{*}\text{Cat}$, $f \otimes \text{id}_{\textbf{D}}$ and $\text{id}_{\textbf{D}} \otimes f$ are unitary equivalences.
        \item Write $kk^{\textbf{C}}$ for the composition 
\[\begin{tikzcd}
	{C^{*}\text{Cat}^{nu}} && {C^{*}\text{Alg}^{nu}} && KK
	\arrow["{A^{f}}", from=1-1, to=1-3]
	\arrow["kk", from=1-3, to=1-5]
\end{tikzcd}\]
 \cite[Prop 7.9]{bunke2023stable} states that $kk^{\textbf{C}}$ is symmetric monoidal. 

Further, \cite[Cor. 6.14]{bunke2023stable} states that $kk^{\textbf{C}}$ inverts unitary equivalences between unital $C^{*}$-categories, so item 1 gives us a symmetric monoidal functor $kk^{\mathcal{C}}\colon C^{*}\textup{Cat}[W^{-1}] \to KK$. We thus have the following commutative diagram:
\[\begin{tikzcd}
	{C^{*}\text{Cat}} & {C^{*}\text{Cat}^{nu}} &&& KK && {\text{Mod}(KU)} \\
	\\
	&& {C^{*}\text{Cat}[W^{-1}]}
	\arrow[hook, from=1-1, to=1-2]
	\arrow["{kk^{\textbf C}}", from=1-2, to=1-5]
	\arrow["{KK(\mathbb C,-)}", from=1-5, to=1-7]
	\arrow["{L_W}"', from=1-1, to=3-3]
	\arrow["{kk^{\mathcal{C}}}", from=3-3, to=1-5]
	\arrow["{K^{\mathcal{C}}}"', from=3-3, to=1-7]
	\arrow["{K^{\textbf{C}}}", curve={height=-30pt}, from=1-2, to=1-7]
\end{tikzcd}\]
As $K^{\mathcal{C}}$ is equivalent to the composite of the symmetric monoidal functor $kk^{\mathcal{C}}$ followed by the lax symmetric monoidal functor $KK(\mathbb C, -)$, the statement follows.
    \end{enumerate}
\end{proof}

We now turn to Morita equivalences. We refer the reader to  \cite[Def. 5.5]{bunke2021additive} and \cite[Def. 2.19]{bunke2021additive} for the definitions of additive and idempotent complete unital $C^{*}$-categories, respectively. We say that a morphism $f\colon \mathcal{C} \to \mathcal{D}$ in $C^{*}\text{Cat}$ presents $\mathcal{D}$ as the additive completion of $\mathcal{C}$ if $\mathcal{D}$ is additive, $f$ is fully faithful, and every object of $\mathcal{D}$ is unitarily isomorphic to a finite orthogonal sum of objects in the image of $f$ (Definition 16.1 of \textit{loc.cit.}). Similarly, we say that a morphism $f\colon \mathcal{C} \to \mathcal{D}$ in $C^{*}\text{Cat}$ presents $\mathcal{D}$ as the idempotent completion of $\mathcal{C}$ if $\mathcal{D}$ is idempotent complete, $f$ is fully faithful, and for each object $d$ of $\mathcal{D}$, there is an isometry $d \to d'$, where $d'$ is an object in the image of $f$ (Definition 16.4 of \textit{loc.cit.}).

Let $C^{*}\text{Cat}^{\text{Idem}}_{\oplus}$ denote the full subcategory of $C^{*}\text{Cat}$ spanned by the additive and idempotent complete $C^{*}$-categories. There is a functor (see 16.1 of \textit{loc.cit}) $(-)^{\#}\colon C^{*}\text{Cat} \to C^{*}\text{Cat}^{\text{Idem}}_{\oplus}$ together with a natural transformation $u\colon\text{id} \to (-)^{\#}$ such that for each object $\mathcal{C}$ of $C^{*}\text{Cat}$, the map $\mathcal{C} \xrightarrow{u_{\mathcal{C}}} \mathcal{C}^{\#}$ presents $\mathcal{C}^{\#}$ as the additive and idempotent completion of $\mathcal{C}$. We define a Morita equivalence to be a morphism in $C^{*}\text{Cat}$ that is sent to a unitary equivalence by $(-)^{\#}$ (Definition 16.7 of \textit{loc.cit.}). Morita equivalences are inverted by $K^{\textbf{C}}$ (Theorem 16.18 of \textit{loc.cit.}).

Given a unital $C^{*}$-algebra $A$, one can consider the category of finitely generated projective Hilbert $A$-modules, which we denote by $\text{Hilb}(A)^{\text{fg,proj}}$. This acquires the structure of a unital $C^{*}$-category in the obvious way. As argued in Example 16.9 of \textit{loc.cit.}, the inclusion $i(A) \to \text{Hilb}(A)^{\text{fg,proj}}$ is a Morita equivalence, so it induces an equivalence $K^{\textbf{C}}(\text{Hilb}(A)^{\text{fg,proj}}) \simeq K(A)$. 

\begin{rem}\label{lone}
Let $A \coloneq \text{Hilb}(\mathbb C)^{\text{fg,proj}}$. For all $B \in C^{*}\textup{Cat}[W^{-1}]$, the lax monoidal structure map $$K^{\mathcal{C}}(L_W(A)) \otimes_{KU} K^{\mathcal{C}}(B) \to K^{\mathcal{C}}(L_W(A) \otimes B)$$ is an equivalence because of the Morita invariance of $K^{\textbf{C}}$.
\end{rem}
In \cite{bunke2019homotopy}, the existence of a simplicial model structure on $C^{*}\text{Cat}$ is shown, in which the weak equivalences are precisely the unitary equivalences and every object is both cofibrant and fibrant. Therefore, the localization $C^{*}\text{Cat}[W^{-1}]$ can be identified with the homotopy coherent nerve $N_h(C^{*}\text{Cat})$. From the explicit description of the simplicial structure in Definition 6.19 of \textit{loc.cit.}, one sees that $N_h(C^{*}\text{Cat})$ is the nerve of a $2$-category.

 We now give an explicit description of $N_h(C^{*}\text{Cat})$. Let $A \in C^{*}\text{Cat}$, and let $\Pi(\Delta^{1})$ denote the fundamental groupoid of the simplicial set $\Delta^{1}$. We describe the unital $C^{*}$-category $A \otimes \Pi(\Delta^{1})$ that serves as the cylinder object on $A$. Its objects are $\text{Ob}(A) \times \{0,1\}$. Its morphisms are given by, for $j,j' \in \{0,1\}$ and $a,a' \in \text{Ob}(A)$, $$\text{Hom}_{A \otimes \Pi(\Delta^{1})}((a,j),(a',j')) = \text{Hom}_{A}(a,a')$$ with the obvious definitions of composition and $*$. We have unital functors $i_0, i_1\colon A \to A \otimes \Pi(\Delta_1)$ given by $a \mapsto (a,0)$ and $a \mapsto (a,1)$, respectively.

  The data of a $2$-simplex in $N_h(C^{*}\text{Cat})$ is that of unital $C^{*}$-categories $A, B, C$, unital functors $f_0\colon A \to B$, $f_1\colon B \to C$, $f_2\colon A \to C$, and a unital functor $h\colon A \otimes \Pi(\Delta^{1}) \to C$ such that $h \circ i_0 = f_1 \circ f_0$ and $h \circ i_1 = f_2$ (see \cite[\href{https://kerodon.net/tag/00M5}{Tag 00M5}]{kerodon}).

 In what follows, we denote by $U(1)^{\delta}$ the unitary group of $\mathbb C$ endowed with the discrete topology. We define a functor $\Phi\colon BBU(1)^{\delta} \to N_h(C^{*}\text{Cat})$ by specifying the data of the object $\text{Mod}(\mathbb C) \in C^{*}\text{Cat}$, the identity functor $\text{Mod}(\mathbb C) \to \text{Mod}(\mathbb C)$, and the action of elements of $U(1)$ as automorphisms of this identity functor by scalar multiplication. 
 
Let $G$ be a finite abelian $p$-group. Suppose one has a map $B(G \times \hat G) \to BBU(1)^{\delta}$; on post-composing with $\Phi$, we get a functor $B(G \times \hat G) \to N_h(C^{*}\text{Cat})$. This gives an action of $G \times \hat G$ on the category $\text{Mod}(\mathbb C)$ regarded as an object of $N_h(C^{*}\text{Cat})$. Further post-composing by $ K^{\mathcal{C}}$ gives a $G \times \hat G$-action on $KU$ regarded as an object of $\text{Mod}(KU)$, and therefore determines a $KU[G \times \hat G]$-module. 

The equivalence classes of maps of spaces from $B(G \times \hat G) \to BBU(1)^{\delta}$ can be identified with the cohomology group $H^2(B(G \times \hat G),U(1)^{\delta})$ (where we mean that $U(1)^{\delta}$ is considered with the trivial action of $G \times \hat G$).

Consider the $U(1)$-valued $2$-cocycle $E\coloneq ((g,\chi),(g',\chi')) \mapsto \chi'(g)$. In the manner explained above, this determines a $KU[G \times \hat G]$-module $\hypertarget{cat}{\textbf{M'}}$. On $p$-completing $\label{M'_E}\textbf{M'}$, we get a $KU_p[G \times \hat G]$-module which we shall denote by $\hypertarget{M_E}{\textbf{M}}$.

We can now formulate Theorem \ref{secondo} more precisely.
\begin{theorem}[\textbf{Treumann Duality}]\label{4.3}
    The functor  $\textup{Mod}(KU_p[G])^{ft} \to  \textup{Mod}(KU_p[\hat G])^{ft}$ given by $N \mapsto (N \otimes_{KU_p[G]}\textup{\textbf{M}})_p$ is an equivalence.
\end{theorem}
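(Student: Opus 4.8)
The plan is to obtain Theorem \ref{4.3} as a direct consequence of Treumann's Theorem \ref{secondo} (Theorem 1.5.1 of \cite{treumann2015representations}); the one thing that genuinely requires work is a bookkeeping check that the $KU_p[G\times\hat G]$-module $M_E$ assembled here out of the cocycle $E$ and the functor $\Phi$ coincides with the module of the same name in \cite{treumann2015representations}.

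So the first step is to identify the two versions of $M_E$. On the present side, $M_E$ is the $p$-completion of $M'_E$, where $M'_E$ is obtained by applying $K^{\mathcal C}$ to the $(G\times\hat G)$-action on $\text{Mod}(\mathbb C)\in C^*\text{Cat}[W^{-1}]$ classified, via $\Phi$, by the Heisenberg cocycle $E\colon ((g,\chi),(g',\chi'))\mapsto\chi'(g)$. By Remark \ref{lone} together with the Morita equivalence $i(\mathbb C)\simeq\text{Mod}(\mathbb C)$, the underlying $KU$-module of $M'_E$ is $KU$ itself, and the $(G\times\hat G)$-action is precisely the one obtained by transporting the class of $E$ through $H^2(B(G\times\hat G),U(1)^\delta)\cong\pi_0\,\text{Map}(B(G\times\hat G),BBU(1)^\delta)$ and then through $\Phi$ and $K^{\mathcal C}$. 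On Treumann's side, $M_E$ is built from exactly the same cohomology class, so after $p$-completion one gets an equivalence of $KU_p[G\times\hat G]$-modules between the two descriptions. If convenient, the first part of Proposition \ref{4.1} and Proposition \ref{long} let one carry out this comparison inside $\text{Fun}(B(G\times\hat G),\text{Mod}(KU_p)^{\text{perf}})$, where it becomes a comparison of two $(G\times\hat G)$-actions on $KU_p$.

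Granting that identification, Theorem \ref{secondo} applies on the nose. Write $F_G:=L_p(-\otimes_{KU_p[G]}M_E)$ and $F_{\hat G}:=L_p(-\otimes_{KU_p[\hat G]}M_E)$; by the second part of Proposition \ref{4.1} these are functors $\text{Mod}(KU_p[G])^{ft}\to\text{Mod}(KU_p[\hat G])^{ft}$ and $\text{Mod}(KU_p[\hat G])^{ft}\to\text{Mod}(KU_p[G])^{ft}$. The two rows of the diagram in Theorem \ref{secondo} say exactly that $F_{\hat G}\circ F_G\simeq\mathrm{id}$ and $F_G\circ F_{\hat G}\simeq\mathrm{id}$, so $F_G$ is an equivalence with inverse $F_{\hat G}$, which is the content of Theorem \ref{4.3}.

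The main obstacle is precisely the identification step, since the two constructions of $M_E$ look very different: ours routes the cocycle through $U(1)$-gerbes, $C^*$-categories and the functor $A^f$, whereas Treumann's is more directly module-theoretic. One has to chase the class of $E$ through $\Phi$, $K^{\mathcal C}$, the passage $M'_E\rightsquigarrow M_E$, and $p$-completion, and confirm in particular that the underlying $KU_p$-module comes out to be $KU_p$ rather than something larger — this is exactly where Remark \ref{lone}, i.e. the Morita triviality of $\text{Mod}(\mathbb C)$, is essential; the remainder is diagram-chasing.
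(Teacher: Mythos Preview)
Your reduction to Theorem \ref{secondo} is logically sound, but the paper takes a slightly different route: rather than identifying its $M_E$ with Treumann's as $KU_p[G\times\hat G]$-modules and then citing Theorem \ref{secondo} as a black box, the paper reruns Treumann's actual argument from \cite[Sec.~3.6]{treumann2015representations} in the present framework. Concretely, it shows directly that $L_p(M_{E\hat G}\otimes_{KU_p[\hat G]}M_{EG})$ and $L_p(M_{EG}\otimes_{KU_p[G]}M_{E\hat G})$ are the diagonal bimodules, observing that Treumann's computations of group actions on tensor products go through verbatim once one knows that $\text{Mod}(\mathbb C)$ is tensor-idempotent and that the lax monoidality constraint of $K^{\mathcal C}$ is an equivalence at $\text{Mod}(\mathbb C)$ (Remark \ref{lone}).

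The trade-off is this: your approach isolates a single clean comparison lemma (``the two $M_E$'s agree'') and then imports Theorem \ref{secondo} wholesale, which is conceptually tidy but, as you yourself flag, makes that comparison the entire content of the proof --- and comparing two modules built from the same cohomology class via genuinely different procedures (Treumann's module-theoretic construction versus the $C^*$-categorical route through $\Phi$, $L_W$ and $K^{\mathcal C}$) is not automatic. The paper's approach sidesteps this identification entirely: it never claims the two $M_E$'s are literally the same object, only that Treumann's \emph{computation} transplants, and the ingredients needed for that transplant (Remark \ref{lone} and idempotence) are already in hand. This is more robust and avoids the delicate chase you outline in your final paragraph.
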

    Note that the definition of the functor above makes sense in light of item 3 of Proposition \ref{4.1}. 
\begin{proof}
    We will first sketch why our bimodule $\textbf{M}$ coincides with the bimodule $M_E$ defined in \cite[Sec. 3.6]{treumann2015representations} to justify that our statement is indeed Treumann's result. We will then outline his proof.

    We first observe that the assignment $A \mapsto \text{Hilb}(A)^{\text{fg,proj}}$ actually determines a functor $C^{*}\text{Alg} \to \text{CMon}(C^{*}\text{Cat})$. One easily sees that the action of $G \times \hat{G}$ on $\text{Mod}(\mathbb C)$ regarded as an object of $N_h(C^{*}\text{Cat})$ refines to an action on $\text{Mod}(\mathbb C)$ regarded as an object of $\text{CMon}(N_h(C^{*}\text{Cat}))$. We also observe that taking the core of the underlying topologically enriched category yields a functor $\text{CMon}(C^{*}\text{Cat}) \to \text{CMon}(\text{Spc})$. As this process inverts unitary equivalences, it induces a functor $\text{CMon}(N_h(C^{*}\text{Cat})) \to \text{CMon}(\text{Spc})$.

    In the following diagram, we claim that all composites from $C^{*}\text{Alg}$ to $\text{CGrp}(\text{Spc})$ are naturally isomorphic to the connective $K$-theory functor on $C^{*}$-algebras:
\[\begin{tikzcd}
	{C^{*}\text{Alg}} && {\text{CMon}(C^{*}\text{Cat})} & {\text{CMon}(N_h(C^{*}\text{Cat}))} & {N_h(C^{*}\text{Cat})} & {\text{Sp}} \\
	{} && {\text{CMon}(\text{Spc})} & {\text{CGrp}(\text{Spc})}
	\arrow["{\text{Hilb}(-)^{\text{fg,proj}}}", from=1-1, to=1-3]
	\arrow[from=1-3, to=1-4]
	\arrow["{(-)^{\simeq}}"', from=1-3, to=2-3]
	\arrow[from=1-4, to=1-5]
	\arrow["{(-)^{\simeq}}"{description}, from=1-4, to=2-3]
	\arrow["{K^{\mathcal{C}}}", from=1-5, to=1-6]
	\arrow["{\tau_{\geq 0}}", from=1-6, to=2-4]
	\arrow["{(-)^{\text{grp}}}"', from=2-3, to=2-4]
\end{tikzcd}\]
The case of the right-down peripheral composite is handled by the Morita invariance of $K^{\mathcal{C}}$, while that of the other peripheral composite is just the fact that the definition of $K$-theory we use coincides with the classical one. Treumann's bimodule $M_E$ is defined via an action of $G \times \hat G$ on $\text{Mod}(\mathbb C) \in \text{CMon}(\text{Spc})$, and our bimodule $\textbf{M}$ via an action of $G \times \hat G$ on $\text{Mod}(\mathbb C) \in \text{CMon}(N_h(C^{*}\text{Cat}))$. One immediately sees that these actions are compatible, so it follows that $\textbf{M} \simeq M_E$.

    We now outline Treumann's proof in \cite[Sec. 3.6]{treumann2015representations} (adapted to $\textbf{M}$).
    
    Regarding $\textbf{M}$ as a $KU_p[\hat G \times G]$-module yields a functor  $\textup{Mod}(KU_p[\hat G])^{ft} \to  \textup{Mod}(KU_p[G])^{ft}$. The cited argument shows that this functor is the inverse of the functor of the theorem, by showing that $(\textbf{M}_{\hat G} \otimes_{KU_p[\hat G]} \textbf{M}_{G})_p$  and $(\textbf{M}_{G} \otimes_{KU_p[G]} \textbf{M}_{\hat G})_p$ are isomorphic to the diagonal $(G,G)$ and $(\hat G,\hat G)$-bimodules over $KU_p$, respectively (here, we write $\textbf{M}_{G}$ for $\textbf{M}$ regarded as a  $KU_p[\hat G \times G]$-module and $\textbf{M}_{\hat G}$ for $\textbf{M}$ regarded as a  $KU_p[G \times \hat G]$-module). This involves computations of group actions that are performed on the level of the module category. These go through verbatim in our situation by Remark \ref{lone} and the fact that $\text{Mod}(\mathbb C)$ is idempotent with respect to the symmetric monoidal structure of the maximal tensor product.
\end{proof}
\section{A Comparison of Takai Duality with Treumann Duality}\label{five}
In this section, we compare Takai duality with Treumann duality. 

Fix a prime number $p$ and a finite abelian $p$-group $G$ for the rest of this section. Let $(-)_p$ denote the $p$-completion functor.

We first define a functor $r_G\colon GC^*\text{Alg}^{nu} \to \text{Mod}(KU_p[G])$ as the composite 
\[\begin{tikzcd}
	{GC^*\text{Alg}^{nu}} && {\text{Fun}(BG,C^*\text{Alg}^{nu})} && {\text{Fun}(BG,KK)} \\
	\\
	{\text{Mod}(KU_p[G])} && {\text{Mod}(KU[G])} && {\text{Fun}(BG,\text{Mod}(KU))}
	\arrow["\sim", from=1-1, to=1-3]
	\arrow["{-\circ kk}", from=1-3, to=1-5]
	\arrow["{-\circ K(-)}", from=1-5, to=3-5]
	\arrow["\sim"', from=3-5, to=3-3]
	\arrow["{(-)_p}"', from=3-3, to=3-1]
\end{tikzcd}\]
One can check that the upper horizontal composite is homotopy invariant, $G$-stable, semi-exact, and $s$-finitary, essentially because $kk$ has those properties for $G = \{e\}$. Thus, one has an essentially unique descended functor $\widehat{\text{Res}}_G\colon KK^G \to \text{Fun}(BG,KK)$ such that the following diagram commutes:
\[\begin{tikzcd}
	{GC^*\text{Alg}^{nu}} && {\text{Fun}(BG,C^*\text{Alg}^{nu})} \\
	\\
	{KK^G} && {\text{Fun}(BG,KK)}
	\arrow["\sim", from=1-1, to=1-3]
	\arrow["{-\circ kk}", from=1-3, to=3-3]
	\arrow["{kk^G}"', from=1-1, to=3-1]
	\arrow["{\widehat{\text{Res}}_G}"', from=3-1, to=3-3]
\end{tikzcd}\]

 It follows that $r_G$ also descends essentially uniquely to a functor $KK^G \to \text{Mod}(KU_p[G])$, which we shall also denote by $r_G$. 

We now restrict our attention to those objects of $KK^G$ that map to finite-type modules. Let $KK^{G,ft}$ denote the full subcategory of $KK^G$ spanned by objects $A$ such that $r_G(A) \in \text{Mod}(KU_p[G])^{ft}$.
\begin{prop}
    $KK^{G,ft}$ is a thick subcategory of $KK^G$.
\end{prop}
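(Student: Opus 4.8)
The plan is to deduce this formally from two facts: that $r_G$ is exact, and that $\text{Mod}(KU_p[G])^{ft}$ is already known to be thick via Proposition \ref{4.1}. Recall that a thick subcategory of a stable $\infty$-category is a full stable subcategory closed under retracts, so I must check that $KK^{G,ft} = r_G^{-1}\big(\text{Mod}(KU_p[G])^{ft}\big)$ contains a zero object, is closed under shifts and under (co)fibre sequences, and is closed under retracts.

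First I would record that $r_G \colon KK^G \to \text{Mod}(KU_p[G])$ is exact. By construction it is colimit-preserving (this is how it was obtained, via the universal property of $kk^G$ and the cocomplete target), and any colimit-preserving functor between stable $\infty$-categories preserves finite colimits; since in a stable $\infty$-category finite limits and finite colimits coincide, such a functor is exact. In particular $r_G$ preserves zero objects, suspensions, loops, and fibre/cofibre sequences; and, being a functor, it preserves retracts.

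Next I would verify that $\text{Mod}(KU_p[G])^{ft}$ is a thick subcategory of $\text{Mod}(KU_p[G])$. By Proposition \ref{4.1}(1), under the equivalence $\text{Mod}(KU_p[G]) \simeq \text{Fun}(BG,\text{Mod}(KU_p))$ the subcategory $\text{Mod}(KU_p[G])^{ft}$ corresponds to $\text{Fun}(BG,\text{Mod}(KU_p)^{ft})$, and the proof of that proposition identifies $\text{Mod}(KU_p)^{ft}$ with $\text{Mod}(KU_p)^{\textup{perf}}$ and shows it is thick in $\text{Mod}(KU_p)$. Since limits, colimits, and retracts in a functor $\infty$-category $\text{Fun}(BG,-)$ are computed objectwise, thickness of $\text{Mod}(KU_p)^{ft}$ immediately yields thickness of $\text{Fun}(BG,\text{Mod}(KU_p)^{ft})$, hence of $\text{Mod}(KU_p[G])^{ft}$.

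Finally I would assemble the pieces: $KK^{G,ft}$ is full by definition; $r_G(0)\simeq 0$ lies in the $ft$-part; $r_G(\Sigma A)\simeq \Sigma r_G(A)$ gives closure under shifts; given a fibre sequence $A\to B\to C$ in $KK^G$ with two of the three terms in $KK^{G,ft}$, applying the exact functor $r_G$ and using that $\text{Mod}(KU_p[G])^{ft}$ is closed under fibres and cofibres forces the third term into $KK^{G,ft}$; and a retract of an object of $KK^{G,ft}$ maps under $r_G$ to a retract of an $ft$-module, hence lies in $\text{Mod}(KU_p[G])^{ft}$. This exhausts the requirements. There is no real obstacle here — the argument is purely formal, and all the genuine content has already been discharged in Proposition \ref{4.1}; the only points needing a moment's care are confirming that $r_G$ is honestly exact (not merely, say, filtered-colimit-preserving) and that thickness transfers through the $\text{Fun}(BG,-)$ equivalence, both of which are standard.
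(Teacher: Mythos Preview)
Your argument is correct and follows essentially the same route as the paper: the paper also observes that $r_G$ is exact, invokes Proposition \ref{4.1}(1) to see that $\textup{Mod}(KU_p[G])^{ft}$ is stable, and concludes formally. Your write-up is in fact more careful than the paper's, which leaves the exactness of $r_G$ and the passage through the $\textup{Fun}(BG,-)$ equivalence implicit.
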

\begin{proof}
    Clearly, $0 \in KK^{G,ft}$. It is also obvious that $KK^{G,ft}$ is closed under retracts.
    
    We observe that $r_G$ is exact. As proved in item 1 of Proposition \ref{4.1}, $\text{Mod}(KU_p[G])^{ft}$ is stable. Therefore, $KK^{G,ft}$ is stable and closed under fibers and cofibers. 
\end{proof}
We now recall the construction of an assembly map $$\textup{colim}_{BG}K(\widehat{\textup{Res}}_G(-)) \to K(- \rtimes G)$$ of functors $KK^{G} \to \textup{Mod}(KU)$ as described in the proof of \cite[Prop. 2.7]{bunke2023ktheory}, which we shall crucially use in what follows.

Bunke first constructs a natural transformation of functors $KK^{G} \to \text{Fun}(BG,KK)$ $$c'_G\colon\widehat{\text{Res}}_G(-) \to \text{triv}^{G}(- \rtimes G)$$where $\text{triv}^{G}\colon KK \to \text{Fun}(BG,KK)$ equips\footnote{We also use the notation $\text{triv}^{G}$ to indicate the functor $KK^{G} \to \text{Fun}(BG,KK^{G})$ that equips an object with the trivial $G$-action. It will be clear from the context as to which functor we have in mind.} an object with the trivial $G$-action. The map $c'_G$ is obtained by the following identifications (see \textit{loc.cit.} for the details):\begin{align*}
\widehat{\text{Res}}_G(-) &\simeq (\text{triv}^{G}(-) \otimes C_0(G)) \rtimes G\\ &\simeq( \text{triv}^{G}(-) \otimes *_{G}\mathbb C) \rtimes G\\& \xrightarrow{G \to *} \text{triv}^{G}(- \rtimes G)
\end{align*}

Its adjoint is a natural transformation $$c_G\colon \text{colim}_{BG}\widehat{\text{Res}}_G(-) \to - \rtimes G$$ The desired assembly map is obtained by applying the colimit preserving functor $K(-)$ to $c_G$.
\begin{theorem}\label{4.5}
    Let $G$ be a finite abelian $p$-group. The assembly map $$\textup{colim}_{BG}K(\widehat{\textup{Res}}_G(-)) \to K(- \rtimes G)$$ constructed above becomes an equivalence after $p$-completion. 
\end{theorem}
\begin{proof}
    We begin with a lemma.
    \begin{lem}\label{prod}
        Let $H_1$ and $H_2$ be finite abelian groups, and set $G \coloneq H_1 \times H_2$. There exists an essentially unique functor $F\colon KK^{G} \to KK^{H_1}$ such that the following diagram is commutative:
\[\begin{tikzcd}
	{KK^{G}} && {\textup{Fun}(BH_1,KK^{H_2})} && {\textup{Fun}(BH_1,KK)} \\
	\\
	KK &&&& {KK^{H_1}}
	\arrow["\widehat{\textup{Res}}_{G,H_2}",from=1-1, to=1-3]
	\arrow["{-\circ(\rtimes H_2)}", from=1-3, to=1-5]
	\arrow["\widehat{\textup{Res}}_{H_1}",from=3-5, to=1-5]
	\arrow["{\rtimes H_1}", from=3-5, to=3-1]
	\arrow["{\rtimes G}"', from=1-1, to=3-1]
	\arrow["F", from=1-1, to=3-5]
\end{tikzcd}\]
    \end{lem}
    \begin{proof}
        First, one checks that the following diagram is commutative:
\[\begin{tikzcd}
	{GC^{*}\text{Alg}^{nu}} && {\text{Fun}(BH_1,H_2C^{*}\text{Alg}^{nu})} && {\text{Fun}(BH_1,C^{*}\text{Alg}^{nu})} \\
	\\
	{C^{*}\text{Alg}^{nu}} &&&& {H_1C^{*}\text{Alg}^{nu}}
	\arrow["\sim", from=1-1, to=1-3]
	\arrow["{-\circ(\rtimes H_2)}", from=1-3, to=1-5]
	\arrow["\sim"', from=3-5, to=1-5]
	\arrow["{\rtimes H_1}", from=3-5, to=3-1]
	\arrow["{\rtimes G}"', from=1-1, to=3-1]
\end{tikzcd}\]
It is easy to verify that each functor in this diagram can be descended to a functor between the appropriate equivariant $KK$ categories. Using the uniqueness properties of the descended functors several times, the lemma follows.
    \end{proof}

We write $G$ as a direct product of $n$ cyclic groups, each of prime power order. We will proceed by induction on $n$.

If $n = 1$, the result follows from \cite[Thm. 5.7]{bunke2023ktheory}.

So assume $G = H_1 \times H_2$ and that the theorem holds for $H_1$ and $H_2$. Let $F$ be the functor of Lemma \ref{prod}.

To simplify the proof, we introduce the following notation and record some obvious relations:
\begin{itemize}
    \item (Colimits) $L_0 \coloneq \text{colim}_{BG}\colon \text{Fun}(BG,KK) \to KK$, $L_1\coloneq \text{colim}_{BH_1}\colon \text{Fun}(BH_1,KK) \to KK$, $L_2 \coloneq \text{colim}_{BH_2}\colon \text{Fun}(BG,KK) \to \text{Fun}(BH_1,KK)$; $L_0 \simeq L_1 \circ L_2$.
    \item (Restrictions) $M_0\coloneq \widehat{\text{Res}}_G\colon KK^{G} \to \text{Fun}(BG,KK)$, $M_1 \coloneq \widehat{\text{Res}}_{G,H_2}\colon KK^{G} \to \text{Fun}(BH_1,KK^{H_2})$, $M_2 \coloneq \widehat{\text{Res}}_{H_2}\colon \text{Fun}(BH_1,KK^{H_2}) \to \text{Fun}(BG,KK)$; $M_0 \simeq M_2 \circ M_1$.
    \item (Crossed Products) $N_0 \coloneq -\rtimes G\colon KK^{G} \to KK$, $N_1\coloneq -\rtimes H_1\colon KK^{H_1} \to KK$, $N_2\coloneq -\rtimes H_2\colon KK^{G} \to KK^{H_1}$, $N'_2\coloneq -\rtimes H_2\colon \text{Fun}(BH_1,KK^{H_2}) \to \text{Fun}(BH_1,KK)$; $N_0 \simeq N_1 \circ N_2$.
\end{itemize}

Note that $c_{H_2}$ induces a map $L_2M_2(-) \to N'_2(-)$. We will abuse notation and also refer to this as $c_{H_2}$.

Abusing notation slightly, we observe that $c'_{G}$ is equivalent to the composite \begin{align*}
\widehat{\text{Res}}_G(-) &\simeq (\text{triv}^{G}(-) \otimes C_0(G)) \rtimes G\\ &\simeq (\text{triv}^{G}(-) \otimes C_0(H_1) \otimes C_0(H_2)) \rtimes G\\ &\simeq (\text{triv}^{H_1}((\text{triv}^{H_2}(-) \otimes C_0(H_2)) \rtimes H_2) \otimes C_0(H_1)) \rtimes H_1\\ &\simeq (\text{triv}^{H_1}((\text{triv}^{H_2}(-) \otimes *_{H_2}\mathbb{C}) \rtimes H_2) \otimes *_{H_1}\mathbb{C}) \rtimes H_1\\& \xrightarrow{H_2 \to *} \text{triv}^{H_1}((\text{triv}^{H_2}(-) \rtimes H_2) \otimes *_{H_1}\mathbb C) \rtimes H_1\\& \xrightarrow{H_1 \to *} \text{triv}^{G}(- \rtimes G)
\end{align*}

Therefore, the adjoint morphism $c_G\colon L_0M_0(-) \to N_0(-)$ is equivalent to the composite 
\[\begin{tikzcd}
	{L_1L_2M_2M_1(-)} && {L_1N'_2M_1(-)} & {L_1\widehat{\text{Res}}_{H_1}F(-)} && {N_1F(-)} & {N_0(-)}
	\arrow["{L_1c_{H_2}M_1}", from=1-1, to=1-3]
	\arrow["\sim", from=1-3, to=1-4]
	\arrow["{c_{H_1}}", from=1-4, to=1-6]
	\arrow["\sim", from=1-6, to=1-7]
\end{tikzcd}\]
where the identifications above are by Lemma \ref{prod}.

The induction hypothesis implies that each individual map above induces an equivalence on $p$-completed $K$-theory (we also use that $(-)_p$ commutes with colimits, being a left adjoint functor), and therefore, so does $c_G$. This completes the proof.
\end{proof}
\begin{prop}\label{4.7}
    $-\rtimes G\colon KK^G \to KK^{\hat G}$ restricts to a functor $-\rtimes G\colon KK^{G,ft} \to KK^{\hat G,ft}$.
\end{prop}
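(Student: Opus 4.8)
The plan is to translate the statement, via Theorem~\ref{4.5}, into a statement about $KU_p$-modules to which Proposition~\ref{4.1}(2) applies. Fix $A \in KK^{G,ft}$, so that $r_G(A) \in \text{Mod}(KU_p[G])^{ft}$; since $KK^{G,ft}$ and $KK^{\hat G,ft}$ are full subcategories, it suffices to show that $r_{\hat G}(A \rtimes G) \in \text{Mod}(KU_p[\hat G])^{ft}$. By Proposition~\ref{4.1}(1) the finite type condition depends only on the underlying $KU_p$-module, so the task reduces to showing that the underlying $KU_p$-module of $r_{\hat G}(A \rtimes G)$ is perfect.

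First I would identify this underlying module. Unwinding the definition of $r_{\hat G}$ together with the equivalence $\text{Mod}(KU_p[\hat G]) \simeq \text{Fun}(B\hat G, \text{Mod}(KU_p))$, forgetting the $\hat G$-action corresponds to evaluating $\widehat{\text{Res}}_{\hat G}(A \rtimes G)$ at the basepoint and then applying $L_p \circ K$. Since $\widehat{\text{Res}}_{\hat G}$ refines the forgetful functor $KK^{\hat G} \to KK$, this basepoint evaluation is the underlying object of $A \rtimes G$ in $KK$, and the underlying $KU_p$-module of $r_{\hat G}(A \rtimes G)$ is therefore $L_p K(A \rtimes G)$, where $- \rtimes G$ is now regarded as the functor $KK^G \to KK$ appearing in Theorem~\ref{4.5} (i.e. the composite of $- \rtimes G : KK^G \to KK^{\hat G}$ with the forgetful functor).

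Next I would apply Theorem~\ref{4.5}, which gives $L_p K(A \rtimes G) \simeq L_p \text{colim}_{BG} K(\widehat{\text{Res}}_G(A))$. Since $L_p$ is a left adjoint it commutes with the colimit, and since the fibre of the completion map $M \to L_p M$ is $L_p$-acyclic for every $KU$-module $M$ while $L_p$-acyclic objects are closed under colimits, the natural map $L_p \text{colim}_{BG} K(\widehat{\text{Res}}_G(A)) \to L_p \text{colim}_{BG} L_p K(\widehat{\text{Res}}_G(A))$ is an equivalence. Now $L_p K(\widehat{\text{Res}}_G(A))$ is precisely the functor $BG \to \text{Mod}(KU_p)$ corresponding to $r_G(A)$, and its colimit over $BG$ is the $G$-homotopy orbits, which — as recalled in the proof of Proposition~\ref{4.1}(2) — is $r_G(A) \otimes_{KU_p[G]} KU_p$, with $KU_p$ carrying the trivial $G$-action. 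Hence the underlying $KU_p$-module of $r_{\hat G}(A \rtimes G)$ is $L_p(r_G(A) \otimes_{KU_p[G]} KU_p)$.

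Finally, $KU_p$ with trivial $G$-action, viewed as a $KU_p[G \times \{e\}] = KU_p[G]$-module, is of finite type (its underlying $KU_p$-module is $KU_p$ itself), so Proposition~\ref{4.1}(2) with $G_1 = G$, $G_2 = \{e\}$ and this bimodule shows that $L_p(r_G(A) \otimes_{KU_p[G]} KU_p)$ is of finite type over $KU_p$, hence perfect; this is exactly what was needed. I expect the main obstacle to be bookkeeping rather than mathematics: one must carefully match the several avatars of $- \rtimes G$ and of $\widehat{\text{Res}}$ so that Theorem~\ref{4.5} applies on the nose, and correctly commute $L_p$ past the homotopy-orbit colimit (and past the change of rings $KU \to KU_p$), after which all the substantive content is supplied by Theorem~\ref{4.5} and Proposition~\ref{4.1}.
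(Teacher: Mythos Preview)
Your proposal is correct and follows essentially the same route as the paper's proof: both reduce to the underlying $KU_p$-module, invoke Theorem~\ref{4.5} to replace $L_pK(A\rtimes G)$ by $L_p\,\mathrm{colim}_{BG}K(\widehat{\mathrm{Res}}_G(A))$, use the left-adjointness and idempotence of $L_p$ to pass to $L_p(r_G(A)\otimes_{KU_p[G]}KU_p)$, and then apply Proposition~\ref{4.1}(2) with the trivial $(G,\{e\})$-bimodule $KU_p$. The only difference is the order of exposition and the amount of bookkeeping you spell out.
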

\begin{proof}
    Let $A$ be an object of $KK^{G}$, and suppose that $K(A)_p$ is of finite type. We have to show that $K(A \rtimes G)_p$ is also of finite type.

    By Theorem \ref{4.5}, we have that $K(A \rtimes G)_p \simeq (K(A)_{hG})_{p}$, and we conclude by item 2 of Proposition \ref{4.1}.
\end{proof}
\begin{kor}\label{hari}
    $- \rtimes G\colon KK^{G,ft} \to KK^{\hat G,ft}$ is an equivalence.
\end{kor}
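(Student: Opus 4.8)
The plan is to reduce this to the unrestricted statement of Corollary \ref{eq} by a purely formal argument: an equivalence of $\infty$-categories whose quasi-inverse also preserves a chosen pair of full subcategories automatically restricts to an equivalence between them (the unit and counit natural isomorphisms restrict along the inclusions, using that the subcategories are full). So the only real work is to check that $- \rtimes G$ and its inverse both carry the finite-type subcategories into one another.

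First I would identify the inverse of $- \rtimes G$. Corollary \ref{3.1.1} supplies a natural equivalence $(- \rtimes G) \rtimes \hat G \simeq \mathrm{id}_{KK^G}$, and, since $\hat G$ is again a finite abelian $p$-group, applying the same corollary with $\hat G$ in place of $G$ (together with the Pontryagin identification $\hat{\hat G} \cong G$) gives $(- \rtimes \hat G) \rtimes G \simeq \mathrm{id}_{KK^{\hat G}}$. Hence $- \rtimes G : KK^G \to KK^{\hat G}$ and $- \rtimes \hat G : KK^{\hat G} \to KK^G$ are mutually inverse equivalences. Next I would invoke Proposition \ref{4.7} twice: as stated it shows that $- \rtimes G$ sends $KK^{G,ft}$ into $KK^{\hat G,ft}$, and applied with the roles of $G$ and $\hat G$ exchanged it shows that $- \rtimes \hat G$ sends $KK^{\hat G,ft}$ into $KK^{G,ft}$.

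Finally, because $KK^{G,ft} \subseteq KK^G$ and $KK^{\hat G,ft} \subseteq KK^{\hat G}$ are full subcategories and both crossed-product functors restrict to them, the two natural equivalences above restrict along the inclusions to natural equivalences exhibiting the restricted functors $- \rtimes G : KK^{G,ft} \to KK^{\hat G,ft}$ and $- \rtimes \hat G : KK^{\hat G,ft} \to KK^{G,ft}$ as mutually inverse. This gives the claim. There is no genuine obstacle in this argument; the only thing demanding a little care is the double-dual bookkeeping when applying Corollary \ref{3.1.1} and Proposition \ref{4.7} to $\hat G$, together with the observation that the whole setup of Section \ref{five} applies to $\hat G$ verbatim, since it too is a finite abelian $p$-group, so that $KK^{\hat G,ft}$ is defined by the same recipe.
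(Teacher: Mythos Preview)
Your argument is correct and is essentially the paper's own proof spelled out in full: the paper simply says the claim is immediate from Proposition \ref{4.7} applied to both $G$ and $\hat G$ together with Corollary \ref{3.1.1}, which is exactly the combination of ingredients you invoke (your use of Corollary \ref{3.1.1} for both $G$ and $\hat G$ is precisely the content of Corollary \ref{eq}). No substantive difference.
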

\begin{proof}
    This is immediate from the statement of Proposition \ref{4.7} applied to both $G$ and $\hat G$, and Corollary \ref{3.1.1}.
\end{proof}
\begin{lem}
    Let $\hyperlink{cast}{\mathbf{B}}$ be the $(G \times \hat G)$-$C^{*}$-algebra defined just before Proposition \ref{3.3}. We have an isomorphism $K(\mathbf{B})_p \simeq \hyperlink{M_E}{\textup{\textbf{M}}}$ in $\textup{Mod}(KU_p[G \times \hat G])$.
\end{lem}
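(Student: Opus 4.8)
The plan is to unwind both sides to the point where the claimed isomorphism reduces to a statement about the $C^*$-category $\mathrm{Mod}(\mathbb{C})$ with its scalar-multiplication $2$-cocycle action, and then invoke the $K$-theory of $C^*$-categories machinery developed earlier together with Remark \ref{lone}. Concretely, first I would recall that $\mathbf{B} = K(L^2(G))$ with the $(G\times\hat G)$-action $(g,\chi)(T) = T_g S_\chi^{-1} T S_\chi T_g^{-1}$ of Lemma \ref{3.2}. The key observation is that this $C^*$-algebra, as a $(G\times\hat G)$-$C^*$-algebra, is Morita equivalent (equivariantly) to $\mathbb{C}$ equipped with the projective $(G\times\hat G)$-action given by the cocycle $E$: indeed $L^2(G)$ is a $G\times\hat G$-equivariant Hilbert space \emph{up to a scalar}, since $S_\chi$ and $T_g$ commute only up to the phase $\chi(g)$ (this is exactly the content of the computation $S_\chi^{-1}T_g = \chi(g^{-1})\cdot T_g S_\chi^{-1}$ in the proof of Lemma \ref{3.2}), so conjugation kills the phase and gives an honest action on $K(L^2(G))$, while the ``1-dimensional piece'' $\mathbb{C}\subset K(L^2(G))$ carries precisely the projective representation with cocycle $E$.

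Next I would make this precise on the level of $C^*$-categories. Regard $K(L^2(G)) = i(K(L^2(G)))$ as an object of $C^*\mathrm{Cat}^{nu}$ with a $B(G\times\hat G)$-action, i.e.\ as a functor $B(G\times\hat G)\to C^*\mathrm{Cat}[W^{-1}]$. The claim is that this functor is equivalent to the composite $B(G\times\hat G)\to BBU(1)^\delta \xrightarrow{\Phi} N_h(C^*\mathrm{Cat})$ determined by the cocycle $E$ (followed by the Morita equivalence $i(\mathbb{C})\simeq \mathrm{Mod}(\mathbb{C})$, which is irrelevant after applying $K^{\mathbf{C}}$). To see this, I would exhibit the unitary $L^2(G)$ as providing the Morita/unitary equivalence: the projective representation $(g,\chi)\mapsto T_g S_\chi^{-1}$ of $G\times\hat G$ on $L^2(G)$, with $2$-cocycle $E$, identifies the $\Phi\circ E$-twisted action on $\mathrm{Mod}(\mathbb{C})$ (equivalently $i(\mathbb{C})$) with the conjugation action on $i(K(L^2(G)))$ after passing to $\mathrm{Hilb}(-)^{\mathrm{fg,proj}}$ — this is the standard fact that conjugation by a projective representation is an honest action and that $K(L^2(G))$ is Morita equivalent to $\mathbb{C}$, upgraded $G\times\hat G$-equivariantly. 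Then applying $K^{\mathcal{C}}$ (which inverts Morita equivalences and unitary equivalences) gives an equivalence of $G\times\hat G$-objects $K^{\mathbf{C}}(i(K(L^2(G)))) \simeq M'_E$ in $\mathrm{Mod}(KU[G\times\hat G])$, where $M'_E$ is defined exactly this way in the paragraph before Theorem \ref{4.3}. Since $K^{\mathbf{C}}(i(K(L^2(G)))) = K(K(L^2(G))) = K(\mathbf{B})$ and $M_E = L_p M'_E$, $p$-completing both sides yields $L_p(K(\mathbf{B}))\simeq M_E$ in $\mathrm{Mod}(KU_p[G\times\hat G])$. Remark \ref{lone} and the idempotency of $\mathrm{Mod}(\mathbb{C})$ for the maximal tensor product are what guarantee that these $C^*$-categorical manipulations are compatible with the monoidal structures, so that the identification respects the full $KU_p[G\times\hat G]$-module structure and not merely the underlying $KU_p$-module with its two separate group actions.

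The main obstacle I anticipate is bookkeeping the equivariance correctly: I must check that the Morita equivalence $i(\mathbb{C})\to \mathrm{Hilb}(K(L^2(G)))^{\mathrm{fg,proj}}$, or equivalently the chosen rank-one projection in $K(L^2(G))$, intertwines the $\Phi\circ E$ action on the source with the conjugation action on the target \emph{as a morphism in} $\mathrm{Fun}(B(G\times\hat G), C^*\mathrm{Cat}[W^{-1}])$, i.e.\ coherently and not just objectwise. This is where the $2$-cocycle $E((g,\chi),(g',\chi')) = \chi'(g)$ actually enters — it records the failure of $(g,\chi)\mapsto T_gS_\chi^{-1}$ to be a strict homomorphism, and one must verify that this failure matches exactly the coherence datum packaged into the functor $B(G\times\hat G)\to BBU(1)^\delta\xrightarrow{\Phi} N_h(C^*\mathrm{Cat})$ via the explicit $2$-simplex description recalled in the excerpt (the functor $h: A\otimes\Pi(\Delta^1)\to C$). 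Once the phase is tracked through the normalization conventions for $T_g$ and $S_\chi$ fixed in Lemma \ref{3.2}, the remaining steps are formal applications of the invariance properties of $K^{\mathbf{C}}$ and of Remark \ref{lone}.
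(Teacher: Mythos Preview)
Your proposal is correct and follows essentially the same route as the paper: both transport the cocycle action on $\mathrm{Mod}(\mathbb{C})$ across the Morita/unitary equivalence to $\mathrm{Mod}(K(L^2(G)))$, compare it with the action induced from the conjugation action on $\mathbf{B}$, and conclude by applying $K^{\mathcal{C}}$. The paper carries out explicitly what you flag as the main obstacle --- it builds the comparison as a homotopy $\sigma: B(G\times\hat G)\times\Delta^1 \to N_h(C^*\mathrm{Cat})$, specifying it on $0$-, $1$-, and $2$-simplices via the natural isomorphisms $\alpha_{(g,\chi)}: m\mapsto S_\chi^{-1}T_g m$ and then verifying the $3$-coskeletal coherence condition case by case; the identity $\alpha_{(g,\chi)}\circ\alpha_{(g',\chi')} = \chi'(g)\,\alpha_{(gg',\chi\chi')}$ is exactly the phase computation you isolate. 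One small remark: your appeal to Remark~\ref{lone} is unnecessary here, since no tensor products of $C^*$-categories are taken in this lemma --- the $KU[G\times\hat G]$-module structure is recovered purely from the functoriality of $K^{\mathcal{C}}$ applied to equivalent diagrams $B(G\times\hat G)\to N_h(C^*\mathrm{Cat})$.
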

\begin{proof}
    We will show that $K(\mathbf{B}) \simeq \hyperlink{cat}{\textbf{M'}}$, so that they are equivalent after $p$-completion.
    
    Our first step is to establish a unitary equivalence of $C^{*}$-categories $\text{Hilb}{(\mathbb C)}^{\text{fg,proj}} \simeq \text{Hilb}{(K(L^2(G)))}^{\text{fg,proj}}$.
    
     Every linear operator on $L^2(G)$ is compact because $L^2(G)$ is finite-dimensional. Let $S$ be the $K(L^2(G))$-module $L^2(G)$. It is a standard fact that every finitely generated module over $K(L^2(G))$ is of the form $\oplus_{i=1}^{n}S$, for some $n \geq 0$. Thus the underlying category of $\text{Hilb}{(K(L^2(G)))}^{\text{fg,proj}}$ coincides with that of the finitely generated modules over $K(L^2(G))$. One also sees that the underlying category of $\text{Hilb}{(\mathbb C)}^{\text{fg,proj}}$ is that of the finite-dimensional $\mathbb C$-vector spaces. In what follows, we will simply write $\text{Mod}(\mathbb C)$ and $\text{Mod}(K(L^2(G)))$ for $\text{Hilb}{(\mathbb C)}^{\text{fg,proj}}$ and $\text{Hilb}{(K(L^2(G)))}^{\text{fg,proj}}$, respectively.
    
    Let $u\colon\mathbb C \hookrightarrow K(L^2(G))$ be the left upper corner inclusion. This determines a (unital) functor $u_*\colon\text{Mod}{(\mathbb C)} \to \text{Mod}{(K(L^2(G)))}$, given by $M \mapsto M \otimes_{\mathbb C} K(L^2(G))$. We claim that $u_*$ is a unitary equivalence - let $u^{*}\colon \text{Mod}(K(L^2(G))) \to \text{Mod}(\mathbb C)$ be the functor given by $M \mapsto u(\mathbb C)M$; then one checks that $u_* \circ u^{*}$ and $u^*\circ u_*$ are unitarily isomorphic to the respective identity functors.
    
    Using the functor $L_W(u)$, we can transport the $(G \times \hat G)$-action on $\text{Mod}(\mathbb C)$ given by the cocycle $E$ to one on $\text{Mod}(K(L^2(G)))$. We represent this action on $\text{Mod}(K(L^2(G)))$ as a functor $\Psi'\colon B(G \times \hat G) \to N_h(C^{*}\text{Cat})$.
By construction, $K^{\mathcal{C}}(\text{Mod}(K(L^2(G)))$ with this action is equivalent to $M_E$ as a $KU[G \times \hat G]$-module.

     Let $P$ be the $(G \times \hat G)$-action on $K(L^2(G))$ that gives it the structure of the $(G \times \hat G)$-$C^*$-algebra $\textbf{B}$. With notation as in the proof of Lemma \ref{3.2}, for $(g,\chi) \in G \times \hat G$, the $C^{*}$-algebra homomorphism $P_{(g,\chi)}\colon K(L^2(G)) \to K(L^2(G))$ is given by $A \mapsto S_{\chi}^{-1} T_gAT_g^{-1} S_{\chi}$. 
     
     Consider the functor $j\colon C^{*}\text{Alg} \to C^{*}\text{Cat}$ given by $A \mapsto \text{Hilb}(A)^{\text{fg,proj}}$. We obtain an action $j(P)$ on $\text{Mod}(K(L^2(G))$. On further composing with $L_W$, we obtain a functor $\Psi\colon B(G \times \hat G) \to N_h(C^{*}\text{Cat})$.
     
     The functor $\Psi$ is determined by the data of the object $\text{Mod}(K(L^2(G)))$, and for each $(g,\chi)$, the endofunctor $\Psi_{(g,\chi)}\colon \text{Mod}(K(L^2(G))) \to \text{Mod}(K(L^2(G)))$ given by $M \mapsto M_{P_{(g,\chi)}}$, where $M_{P_{(g,\chi)}}$ is the module with the same underlying abelian group as $M$ and $K(L^2(G))$-action twisted by the isomorphism $P_{(g,\chi)}$. Because $K$-theory inverts Morita equivalences, $K^{\mathcal{C}}(\text{Mod}(K(L^2(G)))$ with the resulting $(G \times \hat G)$-action is isomorphic to $K(\textbf{B})$ as a $KU[G \times \hat G]$-module.

    We will now construct an equivalence between the functors $\Psi'$ and $\Psi$. By the observations above, this would complete the proof. 
    
    We start by defining invertible natural transformations $\alpha_{(g,\chi)}$ between the identity functor and $\Psi_{(g,\chi)}$. For $M \in \text{Mod}(K(L^2(G)))$, we define a module isomorphism $\alpha_{(g,\chi)}\colon M \to \Psi_{(g,\chi)}(M)$ by the rule $m \mapsto S_{\chi}^{-1} T_g m$. To make sense of this formula, note that the underlying abelian groups of $\Psi_{(g,\chi)}(M)$ and $M$ are the same. One checks that these maps indeed assemble to an invertible natural transformation $\alpha_{(g,\chi)}$. Stated differently, we have the following invertible $2$-cell in $N_{h}(C^{*}\text{Cat})$:
\[\begin{tikzcd}
	{\text{Mod}(K(L^2(G)))} &&& {\text{Mod}(K(L^2(G)))}
	\arrow[""{name=0, anchor=center, inner sep=0}, "{\Psi_{(g,\chi)}}"', curve={height=30pt}, from=1-1, to=1-4]
	\arrow[""{name=1, anchor=center, inner sep=0}, "{\text{id}}", curve={height=-30pt}, from=1-1, to=1-4]
	\arrow["{\alpha_{(g,\chi)}}", shorten <=8pt, shorten >=8pt, Rightarrow, from=1, to=0]
\end{tikzcd}\]
One readily checks that $\alpha_{(g,\chi)}\circ \alpha_{(g',\chi')} = E(g,\chi,g',\chi')\alpha_{(gg',\chi \chi')} = \chi'(g)\alpha_{(gg',\chi \chi')}$.

We now define a homotopy $\sigma\colon B(G \times \hat G) \times \Delta^{1} \to N_h(C^{*}\text{Cat})$. To do so, we view both the source\footnote{Here, we consider $ B(G \times \hat G)$ as a category with a single object.} and target as simplicial sets; the former is the nerve of a $1$-category, and the latter is the nerve of a $2$-category (as remarked earlier). The source is consequently $2$-coskeletal and the target $3$-coskeletal (see Theorem 5.2 of \cite{2cat}\footnote{The author thanks Lyne Moser for pointing out this reference.}). We will define $\sigma$ as a map of simplicial sets, and start by describing $\sigma$ in low dimensions:

A word on notation - we will denote $n$-simplices, for $n \in \{0,1,2,3\}$, in $\Delta^{1}$ by sequences $x_0...x_{n}$, with $x_i \in \{0,1\}$ and $x_i \leq x_{i+1}$. We will write $\mathcal{C}$ for $\text{Mod}(K(L^2(G)))$. 
\begin{itemize}
    \item \textbf{Dimension 0} Set $\sigma((*,0)) = \mathcal{C}$ and $\sigma((*,1)) = \mathcal{C}$.
    \item \textbf{Dimension 1} Set $\sigma(((g,\chi),00)) = \text{id}_{\mathcal{C}}$, $\sigma(((g,\chi),01)) = \Psi_{(g,\chi)}$ and \\$\sigma(((g,\chi),11)) = \Psi_{(g,\chi)}$.
    \item \textbf{Dimension 2} Set $\sigma(((g,\chi),(g',\chi'),000))$ to be 
\[\begin{tikzcd}
	&& {\mathcal{C}} \\
	\\
	{\mathcal{C}} &&&& {\mathcal{C}}
	\arrow["{\text{id}}", from=3-1, to=1-3]
	\arrow["{\text{id}}", from=1-3, to=3-5]
	\arrow[""{name=0, anchor=center, inner sep=0}, "{\text{id}}"', from=3-1, to=3-5]
	\arrow["{\chi'(g)}", shorten >=7pt, Rightarrow, from=1-3, to=0]
\end{tikzcd}\]
Set $\sigma(((g,\chi),(g',\chi'),001))$ to be  
\[\begin{tikzcd}
	&& {\mathcal{C}} \\
	\\
	\\
	{\mathcal{C}} &&&&& {\mathcal{C}}
	\arrow["{\text{id}}", from=4-1, to=1-3]
	\arrow[""{name=0, anchor=center, inner sep=0}, "{\Psi_{(g'g,\chi'\chi)}}"', from=4-1, to=4-6]
	\arrow["{\Psi_{(g,\chi)}}", curve={height=-30pt}, from=1-3, to=4-6]
	\arrow["{\alpha_{(g',\chi')}*\text{id}_{\Psi_{(g,\chi)}}}", shorten >=12pt, Rightarrow, from=1-3, to=0]
\end{tikzcd}\]
Set $\sigma(((g,\chi),(g',\chi'),011))$ to be 
\[\begin{tikzcd}
	&& {\mathcal{C}} \\
	\\
	{\mathcal{C}} &&&& {\mathcal{C}}
	\arrow["{\Psi_{(g',\chi')}}", from=3-1, to=1-3]
	\arrow["{\Psi_{(g,\chi)}}", from=1-3, to=3-5]
	\arrow[""{name=0, anchor=center, inner sep=0}, "{\Psi_{(g'g,\chi'\chi)}}"', from=3-1, to=3-5]
	\arrow["{\text{id}}"', shorten >=7pt, Rightarrow, from=1-3, to=0]
\end{tikzcd}\]
We set  $\sigma(((g,\chi),(g',\chi'),111))$ similarly: 
\[\begin{tikzcd}
	&& {\mathcal{C}} \\
	\\
	{\mathcal{C}} &&&& {\mathcal{C}}
	\arrow["{\Psi_{(g',\chi')}}", from=3-1, to=1-3]
	\arrow["{\Psi_{(g,\chi)}}", from=1-3, to=3-5]
	\arrow[""{name=0, anchor=center, inner sep=0}, "{\Psi_{(g'g,\chi'\chi)}}"', from=3-1, to=3-5]
	\arrow["{\text{id}}"', shorten >=7pt, Rightarrow, from=1-3, to=0]
\end{tikzcd}\]
\end{itemize}
One easily checks that these definitions are compatible with the face and degeneracy maps wherever defined, i.e. on simplices of dimension $\leq 2$.

The $3$-coskeletality of $N_h(C^{*}\text{Cat})$ implies that we only have to check the following condition to see that the definition above extends uniquely to the desired homotopy $\sigma$:

\textit{For two composable morphisms $f$ and $f'$ in $B(G \times \hat G) \times \Delta^{1}$, denote by $\sigma_{f,f'}$ the image of the identity $2$-simplex on $f,f'$ and $f'\circ f$. Let $f_3,f_2,f_1$ be a sequence of composable morphisms in $B(G \times \hat G) \times \Delta^{1}$. Then there is an equality of $2$-simplices}$$\sigma_{f_1,f_3 \circ f_2}*(\sigma_{f_2,f_3}\circ \text{id}_{f_1}) = \sigma_{f_2 \circ f_1,f_3}*(\text{id}_{f_3} \circ \sigma_{f_1,f_2})$$
We check this condition case-by-case on the sequence of composable morphisms, according to the underlying sequence after projecting to $\Delta^1$:

\begin{itemize}
    \item The case $0000$ amounts to checking that $E$ is a cocycle.
    \item The case $0001$ reduces to verifying the equality, for $(g_3,\chi_3), (g_2,\chi_2), (g_1,\chi_1) \in G \times \hat G$, $$(\alpha_{(g_2,\chi_2)} \circ \alpha_{(g_1,\chi_1)})*\text{id}_{\Psi_{(g_3,\chi_3)}} = \chi_1(g_2)\alpha_{(g_1g_2,\chi_1\chi_2)}*\text{id}_{\Psi_{(g_3,\chi_3)}}$$This follows from the observation made just before the definition of $\sigma$.
    \item The case $0011$ is just the trivial equality $$\alpha_{(g_1,\chi_1)}*\text{id}_{\Psi_{(g_2g_3,\chi_2\chi_3)}} = \alpha_{(g_1,\chi_1)}*\text{id}_{\Psi_{(g_2g_3,\chi_2\chi_3)}}$$
    \item The cases $0111$ and $1111$ are trivial because all the $2$-simplices involved are identities.
\end{itemize}

Therefore, $\sigma$ defines a homotopy between $\Psi' = \sigma|_{0}$ and $\Psi = \sigma|_{1}$. It is, in fact, a homotopy equivalence, because it factors through the maximal groupoid of $N_h(C^{*}\text{Cat})$.

This shows that the two actions are equivalent. Hence, $K(\textbf{B})$ with its natural $(G \times \hat G)$-action is equivalent to $KU$ with $(G \times \hat G)$-action given by the cocycle $E$, whence it follows that $K(\textbf{B}) \simeq \textbf{M'}$, as desired. 
\end{proof}
We are now ready to state and prove our main comparison result (Theorem \ref{trid}).
\begin{theorem}\label{4.9}
    There is a natural equivalence filling the square of functors 
\[\begin{tikzcd}
	{KK^{G,ft}} &&& {KK^{\hat G,ft}} \\
	\\
	\\
	{\textup{Mod}(KU_p[G])^{ft}} &&& {\textup{Mod}(KU_p[\hat G])^{ft}}
	\arrow["{-\rtimes G}" ,"\sim"', from=1-1, to=1-4]
	\arrow["{r_G}"', from=1-1, to=4-1]
	\arrow["{r_{\hat G}}", from=1-4, to=4-4]
	\arrow["\sim","{(-\otimes_{KU_p[G]}\textup{\textbf{M}})_p}"', from=4-1, to=4-4]
\end{tikzcd}\]
\end{theorem}
\begin{proof}
    The down-right composite of the diagram is the functor $A \mapsto (K(A)_p \otimes_{KU_p[G]} \textbf{M})_p$, with $\hat G$-action induced by that on $\hyperlink{M_E}{\textbf{M}}$. By the previous lemma, we have a natural isomorphism of $KU_p[\hat G]$-modules
    $$(K(A)_p \otimes_{KU_p[G]} \textbf{M})_p \simeq (\text{colim}_{BG}(K(A)_p \otimes_{KU_p} K(\textbf{B})_p))_p$$
    The symmetric monoidality of $(-)_p$ gives a natural isomorphism of $KU_p[\hat G]$-modules
    $$(\text{colim}_{BG}(K(A)_p \otimes_{KU_p} K(\textbf{B})_p))_p \simeq (\text{colim}_{BG} (K(A) \otimes_{KU} K(\textbf{B}))_p)_p$$
    By the K{\"u}nneth theorem ($K(\textbf{B}) \simeq KU$) and the left-adjointness and idempotence of $(-)_p$, we obtain yet another natural isomorphism\footnote{The colimit on the right hand side is to be interpreted in $p$-complete $KU_p$-modules.} of $KU_p[\hat G]$-modules $$ (\text{colim}_{BG} (K(A) \otimes_{KU} K(\textbf{B}))_p)_p \simeq \text{colim}_{BG}(K(A \otimes \textbf{B})_p)$$
    Lastly, Theorem \ref{4.5} with another application of the left-adjointness of $(-)_p$ gives a natural $KU_p[\hat G]$-module isomorphism
    $$\text{colim}_{BG}(K(A \otimes \textbf{B})_p) \simeq K((A \otimes \mathbf{B}) \rtimes G)_p$$
    By Theorem \ref{3.4}, we can replace the upper horizontal arrow by the descended functor $F$ of Proposition \ref{3.3}. We recall that $F$ is given by $A \mapsto (A \otimes \mathbf{B}) \rtimes G$, with $\hat G$-action on the target induced by that on $\hyperlink{cast}{\textbf{B}}$. The composite $r_{\hat G} \circ F$ is also $A \mapsto K((A \otimes \mathbf{B}) \rtimes G)_p$, completing the proof.
\end{proof}
\printbibliography

@misc{lurie,
    title = {Ambidexterity in $K(n)$-Local Stable Homotopy Theory},
    author = {Michael Hopkins and Jacob Lurie},
    year = {2013}
}

@article{bunke2019homotopy,
    author = {Ulrich Bunke} ,
    title = {Homotopy theory with *categories},
    journal = {Theory Appl. Categ. 34 (2019), no. 27, 781-853} ,
    year = {2019}
}

@misc{bunke2023ktheory,
      title={$K$-theory of crossed products via homotopy theory}, 
      author={Ulrich Bunke},
      year={2023},
      eprint={2311.06562},
      archivePrefix={arXiv},
      primaryClass={math.OA}
}

@book{bunke2020homotopy,
    author = {Ulrich Bunke and Alexander Engel},
    title = {Homotopy theory with bornological coarse spaces},
    publisher = {Springer, Lecture Notes in Mathematics, vol.2269},
    year = {2020}
}

@misc{bunke2021additive,
      title={Additive C*-categories and K-theory}, 
      author={Ulrich Bunke and Alexander Engel},
      year={2021},
      eprint={2010.14830},
      archivePrefix={arXiv},
      primaryClass={math.KT}
}

@misc{bunke2023stable,
      title={A stable $\infty$-category for equivariant $KK$-theory}, 
      author={Ulrich Bunke and Alexander Engel and Markus Land},
      year={2023},
      eprint={2102.13372},
      archivePrefix={arXiv},
      primaryClass={math.OA}
}

@misc{bunke2023survey,
      title={A survey on operator $K$-theory via homotopical algebra}, 
      author={Ulrich Bunke and Markus Land and Ulrich Pennig},
      year={2023},
      eprint={2311.17191},
      archivePrefix={arXiv},
      primaryClass={math.OA}
}

@misc{treumann2015representations,
      title={Representations of finite groups on modules over K-theory (with an appendix by Akhil Mathew)}, 
      author={David Treumann},
      year={2015},
      eprint={1503.02477},
      archivePrefix={arXiv},
      primaryClass={math.RT}
}

@article{2cat,
      author = {Ross Street},
      title = {The algebra of oriented simplexes},
      journal = {Journal of Pure and Applied Algebra, vol.49(3)},
      year = {1987}
}

@article{ambrcat,
    author = {Ivo Dell'Ambrogio},
    title = {The unitary symmetric monoidal model category of small $C^{*}$-categories},
    journal = {Homology, Homotopy and Applications, vol.14(2)},
    year = {2012}
}

@misc{bunke2024etheory,
      title={$E$-theory is compactly assembled}, 
      author={Ulrich Bunke and Benjamin Duenzinger},
      year={2024},
      eprint={2402.18228},
      archivePrefix={arXiv},
      primaryClass={math.KT}
}

@book{algwill,
    author = {Dana P. Williams},
    title = {Crossed Products of $C^{*}$-Algebras},
    publisher = {Number 134 in Math. Surveys and Monographs, Amer. Math. Soc., Providence, RI},
    year = {2007}
}

@article{meyerkth,
    author = {Ralf Meyer},
    title = {Categorical aspects of bivariant $K$-theory},
    journal = {K-theory and Noncommutative Geometry (Valladolid 2006), EMS Ser. Congr. Rep., pp. 1-39},
    year = 2007
}

@article{bunke2024kk,
    author = {Ulrich Bunke} ,
    title = {KK- and E-theory via homotopy theory},
    journal = {Orbita Mathematicae, vol.1(2)},
    year = {2024}
}

@article{completion,
    author = {Michael F. Atiyah and Graeme B. Segal} ,
    title = {Equivariant K-theory and completion},
    journal = {Journal of Differential Geometry, 3},
    year = {1969}
}

@article{classically,
    author = {Saad Baaj and Georges Skandalis},
    title = {C*-algèbres de Hopf et théorie de Kasparov équivariante},
    journal = {K-Theory, 2, no.6},
    year = {1989}
}

@misc{kerodon,
  author       = {Jacob Lurie},
  title        = {Kerodon},
  howpublished = {\url{https://kerodon.net}},
  year         = {2024},
}

@book{ha,
    author = {Jacob Lurie},
    title = {Higher Algebra},
    year = {2017},
    url = {https://people.math.harvard.edu/~lurie/papers/HA.pdf}
}
\end{document}